\newtheorem{theorem}{Theorem}[section]
\newtheorem{corollary}[theorem]{Corollary}
\newtheorem{lemma}[theorem]{Lemma}
\newtheorem{proposition}[theorem]{Proposition}
\newtheorem{definition}[theorem]{Definition}
\newtheorem{example}[theorem]{Example}
\newtheorem{remark}[theorem]{Remark}
\newenvironment{proof}{{\bf Proof}}{{\hfill $ \Box $}\vskip 4mm}
\def\Box{\mbox{$\sqcap\!\!\!\!\sqcup$}}
\def\bea{\begin{eqnarray*}}
\def\eea{\end{eqnarray*}}
\begin{document}
\title{Frobenius algebras of corepresentations: gradings.}
\author{ S.
D\u{a}sc\u{a}lescu, C. N\u{a}st\u{a}sescu and L. N\u{a}st\u{a}sescu \\[2mm]
University of Bucharest, Facultatea de Matematica,\\ Str.
Academiei 14, Bucharest 1, RO-010014, Romania,\\
 e-mail: sdascal@fmi.unibuc.ro, Constantin\_nastasescu@yahoo.com,
 lauranastasescu@gmail.com}

\date{}
\maketitle

\begin{abstract}
We consider Frobenius algebras in the monoidal category of right
comodules over a Hopf algebra $H$. If $H$ is a group Hopf algebra,
we study a more general Frobenius type property and uncover the
structure of graded Frobenius algebras. Graded
symmetric algebras are also investigated.\\
2010 MSC: 16W50, 16T05, 16K99\\
Key words: Hopf algebra, comodule, monoidal category, Frobenius
algebra, graded algebra, symmetric algebra, graded division
algebra, Frobenius functor.
\end{abstract}

\section{Introduction and preliminaries}

Frobenius algebras originate in the work of F. G. Frobenius on
representation theory of finite groups. Their study was initiated
by Brauer, Nesbitt and Nakayama, and then continued by
Dieudonn$\rm \acute{e}$, Eilenberg, Azumaya, Thrall, Kasch, etc.,
uncovering a rich representation theory. Also, Frobenius algebras
have played a role in Hopf algebra theory (any finite dimensional
Hopf algebra is Frobenius), cohomology rings of compact oriented
manifolds, orbifold theories, solutions of the quantum Yang-Baxter
equation, Jones polynomials, topological quantum field theory; see
for instance \cite{lam}, \cite{kadison}. A finite dimensional
algebra $A$ over a field $k$ is Frobenius if $A\simeq A^*$ as left
$A$-modules. Abrams proved in \cite{abrams1}, \cite{abrams2} that
$A$ is Frobenius if and only if there is a coalgebra structure
$(A,\delta, \varepsilon_A)$ on the space $A$ such that $\delta$ is
a morphism of $A,A$-bimodules. Since Abrams' characterization
makes sense in any monoidal category, this provided the right way
to define the concept of Frobenius algebra in  such a framework.
The study of Frobenius algebras in monoidal categories was
initiated and developed by M$\rm \ddot{u}$ger \cite{muger}, Street
\cite{street}, Fuchs and Stigner \cite{fuchs}, Yamagami \cite{y}.
It is a challenging problem to understand the significance of
Frobenius algebras in monoidal categories other than categories of
vector spaces. M$\rm \ddot{u}$ger investigated in \cite{muger}
Frobenius algebras arising in category theory and subfactor
theory. Fuchs, Schweigert and Stigner \cite{fss} considered
certain Frobenius algebras in the monoidal category of
$H$-bimodules, where $H$ is a finite dimensional factorizable
ribbon Hopf algebra. In Section \ref{scorepresentations} we
consider the monoidal category ${\cal M}^H$ of right comodules
(i.e. corepresentations) over a Hopf algebra $H$. We do not
require that the antipode $S$ is bijective, thus ${\cal M}^H$ has
left duals, but not necessarily right duals. If $A$ is a finite
dimensional algebra in this category, i.e. a right $H$-comodule
algebra, then both $A$ and its dual space $A^*$ have natural
structures of objects in the category ${\cal M}^H_A$ of right
Doi-Hopf modules. We say that $A$ is right $H$-Frobenius if these
two objects are isomorphic. On the other hand, $A^*$ has a natural
structure of a left Doi-Hopf module over the right $H$-comodule
algebra $A^{(S^2)}$, which is just $A$ as an algebra, and has the
$H$-coaction shifted by $S^2$. We say that $A$ is left
$H$-Frobenius if $A^*$ and $A^{(S^2)}$ are isomorphic as such left
Doi-Hopf modules. We give equivalent characterizations of these
two Frobenius properties. In fact, the right $H$-Frobenius
property is not new, it is equivalent to $A$ being a Frobenius
algebra in the category ${\cal M}^H$. The left $H$-Frobenius
property seems to be specific to the category of $H$-comodules,
there is no obvious such concept in an arbitrary monoidal
category. We prove that $A$ is left $H$-Frobenius if and only if
$A^{(S^2)}$ is a Frobenius algebra in the monoidal category ${\cal
M}^H$. Also, we show that if $A$ is right $H$-Frobenius, then $A$
is left $H$-Frobenius; if $S$ is injective,  the converse also
holds.

 In the rest of the paper we specialize the study of Frobenius algebras in ${\cal M}^H$ to the
case where $H$ is a group Hopf algebra $kG$ of an arbitrary group
$G$. In this case the corepresentations of $H$ are the $G$-graded
vector spaces, and an algebra in this category is just a
$G$-graded algebra. A $G$-graded algebra $A$ is called graded
Frobenius if it is right (or equivalently left) $kG$-Frobenius. In
fact we define a more general concept in Section
\ref{ssigmaFrobenius}. For $\sigma \in G$, we say that $A$ is
$\sigma$-graded Frobenius if $A^*$ is isomorphic to the suspension
$A(\sigma)$ of $A$ in the category $A-gr$ of graded left
$A$-modules. We study basic properties of $\sigma$-graded
Frobenius algebras and give several characterizations of them. One
of our main results says that a finite dimensional $G$-graded
algebra $A$ is $\sigma$-graded Frobenius if and only if
$A_{\sigma}\simeq A_e^*$ as left $A_e$-modules, and $A$ is left
$\sigma$-faithful. Here $A_\sigma$ denotes the homogeneous
component of degree $\sigma$ of $A$, and $e$ is the neutral
element of $G$. In particular, this
 gives the structure of Frobenius algebras in the
category of graded vector spaces. These are the finite dimensional
$G$-graded algebras $A$ which are left $e$-faithful and such that
$A_e$ is a Frobenius algebra. In particular, graded semisimple
algebras are graded Frobenius.
 We
give examples (in Section \ref{sectionexamples}) of Frobenius
algebras which are not $\sigma$-graded Frobenius for any $\sigma$.
However, if $A_e$ is a local ring, and $A$ is Frobenius, we show
that $A$ is $\sigma$-graded Frobenius for some $\sigma$.

Among graded Frobenius algebras there are some objects with more
symmetry: the
 graded symmetric algebras, which are just the symmetric algebras
 in the sovereign category of graded vector spaces, see
 \cite{fuchs}. In Section \ref{ssimetric} we show that a graded division algebra $\Delta$ is
 graded symmetric, provided that the center of $\Delta_e$ consists
 of the central elements of $\Delta$ of degree $e$. In Section \ref{sectionexamples} we give
 several examples to illustrate the concepts of $\sigma$-graded
 Frobenius, graded symmetric, Frobenius and symmetric, and
 connections between them. In particular we show that any matrix
 algebra (over a field), endowed with a grading such that the matrix units are homogeneous elements, is graded
 symmetric. Using the description of abelian gradings on matrix algebras over an algebraically closed field, given by Bahturin, Sehgal and Zaicev
 \cite{bsz}, we show that any such grading is graded symmetric.

 Finally, in Section \ref{sfunctors} we discuss the concept of graded Frobenius in relation
 to Frobenius functors. The property of being a Frobenius algebra
 is not Morita invariant. However a matrix algebra
 $M_n(A)$ is Frobenius whenever $A$ is so. We show that the
 converse also holds: if $M_n(A)$ is Frobenius, then so is $A$. We
 characterize graded Frobenius algebras as those finite
 dimensional graded algebras $A$ for which the functor
 $U:A-gr\rightarrow k-gr$, forgetting the $A$-action, is a
 Frobenius functor. We connect the graded Frobenius property for
 $A$ and the Frobenius property for the functor
 $(-)_e:A-gr\rightarrow A_e-mod$. Finally, we give a new proof for
 a result of Bergen \cite{bergen} stating that if $H$ is a finite
 dimensional Hopf algebra acting on the finite dimensional algebra
 $A$, then the smash product $A\# H$ is Frobenius if and only if
 so is $A$.

 For definitions and notation we refer to \cite{DNR}, \cite{mo}
 for coalgebras and Hopf algebras, and to \cite{nvo} for graded
 algebras. All algebras, coalgebras and graded algebras are
 considered over a field $k$.

\section{Frobenius algebras in categories of comodules}
\label{scorepresentations}

Let $C$ be a coalgebra, and let $M\in {\cal M}^C$ be a finite
dimensional comodule with comodule structure map
$\rho:M\rightarrow M\otimes C$. Then $M$ is a left $C^*$-module in
the standard way, and then $M^*$ is a right $C^*$-module. By
\cite[Lemma 2.2.12]{DNR} this is a rational right $C^*$-module,
i.e. its module structure comes from a left $C$-comodule
structure. Since we need the details of the proof, we indicate how
this comes out. Consider the natural isomorphism $\gamma: M\otimes
C\otimes M^*\rightarrow Hom(M,M\otimes C)$, $\gamma (m\otimes
c\otimes m^*)(x)=m^*(x)m\otimes c$ for any $m,x\in M,m^*\in
M^*,c\in C$, and look at the inverse image of $\rho$, i.e.
$\rho=\gamma (\sum_im_i\otimes c_i\otimes m_i^*)$. Then
$\rho(m)=\sum _i m_i^*(m)m_i\otimes c_i$ for any $m\in M$, and a
direct computation shows that $m^*c^*=\sum_ic^*(c_i)m^*(m_i)m_i^*$
for any $m^*\in M^*$ and $c^*\in C^*$, i.e. the right $C^*$-module
structure of $M^*$ comes from a left $C$-comodule structure given
by $\mu:M^*\rightarrow C\otimes M^*, \mu (m^*)=\sum
_im^*(m_i)c_i\otimes m_i^*$.

Now let $H$ be a Hopf algebra and let $A$ be a right $H$-comodule
algebra with $H$-comodule structure denoted by $a\mapsto \sum
a_0\otimes a_1$. Let $M\in \, _A{\cal M}^H$ be a finite
dimensional left $(A,H)$-Doi-Hopf module, i.e. it is a left
$A$-module and a right $H$-comodule with $\rho:M\rightarrow
M\otimes H, \rho(m)=\sum m_0\otimes m_1$, such that $\rho
(am)=\sum a_0m_0\otimes a_1m_1$ for any $a\in A, m\in M$. As
above, there exists $\sum_i m_i\otimes h_i\otimes m_i^*\in
M\otimes H\otimes M^*$ such that $\rho(m)=\sum _i
m_i^*(m)m_i\otimes h_i$ for any $m\in M$, and $M^*$ is a left
$H$-comodule by $m^*\mapsto \sum_i m^*(m_i)h_i\otimes m_i^*$.
Using the antipode $S$ of $H$, $M^*$ becomes a right $H$-comodule
via $\nu:M^*\rightarrow M^*\otimes H$, $\nu (m^*)=\sum
m^*_{(0)}\otimes m^*_{(1)}=\sum_im^*(m_i)m_i^*\otimes S(h_i)$.
Note that since $M$ is a left $(A,H)$-Doi-Hopf module, we have
that
\begin{equation}\label{propform}
\sum_im_i^*(am)m_i\otimes h_i=\sum_im_i^*(m)a_0m_i\otimes a_1h_i
\end{equation}
for any $a\in A$ and $m\in M$.

\begin{proposition} \label{dualDoiHopf}
If $M\in \, _A{\cal M}^H$ is  finite dimensional, then $M^*\in \,
{\cal M}_A^H$, i.e. $M^*$ is a right $(A,H)$-Doi-Hopf module with
the right $H$-comodule structure $\nu$.
\end{proposition}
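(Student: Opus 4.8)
The plan is to verify directly that $M^*$, equipped with the right $A$-module structure dual to the left $A$-module structure on $M$ (i.e.\ $(m^*a)(m)=m^*(am)$) and the right $H$-comodule structure $\nu$ described above, satisfies the Doi--Hopf compatibility condition
\[
\nu(m^*a)=\sum (m^*_{(0)}a_0)\otimes m^*_{(1)}a_1
\]
for all $a\in A$, $m^*\in M^*$. First I would record that $M^*$ is indeed a right $H$-comodule via $\nu$: this follows from the fact that $\mu:M^*\to H\otimes M^*$ makes $M^*$ a left $H$-comodule (the general coalgebra fact recalled before the proposition, applied to $H$ as a coalgebra and $M$ as an $H$-comodule), and then twisting a left $H$-comodule by the antipode $S$ of the Hopf algebra $H$ produces a right $H$-comodule in the standard way; this uses only that $S$ is an anti-coalgebra map, not that it is bijective.

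The heart of the argument is the module-comodule compatibility. I would start from $\nu(m^*a)=\sum_i (m^*a)(m_i)\, m_i^*\otimes S(h_i)=\sum_i m^*(am_i)\, m_i^*\otimes S(h_i)$, and the goal is to massage this into $\sum (m^*_{(0)}a_0)\otimes m^*_{(1)}a_1 = \sum_i m^*(m_i)\,(m_i^*a_0)\otimes S(h_i)a_1$. The key input is the propagation identity \eqref{propform}, namely $\sum_i m_i^*(am)m_i\otimes h_i=\sum_i m_i^*(m)a_0m_i\otimes a_1h_i$, which encodes that $M$ is a left $(A,H)$-Doi--Hopf module. The technique is to evaluate a suitable covector against both sides of \eqref{propform} (or, more cleanly, to apply $m^*$ in the $M$-slot of a version of \eqref{propform} with a free vector $m$ plugged in), thereby converting the expression $\sum_i m^*(am_i)\,m_i^*(-)\,h_i$ into $\sum_i m^*(m_i)\,(a_0m_i$ paired appropriately$)\,a_1h_i$; then one applies $S$ to the $H$-slot, using the anti-multiplicativity $S(a_1h_i)=S(h_i)S(a_1)$, and one must still convert $S(a_1)$ acting on the right back into the form $S(h_i)a_1$ demanded by the comodule structure of $A$ — but here one uses that the target $H$-comodule structure on $A$ in ${\cal M}^H_A$ is the given one $a\mapsto\sum a_0\otimes a_1$, and keeps careful track of which copy of $A$'s coaction is being used. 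A clean way to organize all of this is to test both the claimed identity and \eqref{propform} by pairing with an arbitrary $m\in M$ in the appropriate dual slot, reducing everything to an identity of elements of $M\otimes H$ (or a scalar-valued identity after one further pairing), which is then exactly \eqref{propform} with $S$ applied.

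The main obstacle I anticipate is bookkeeping: one is juggling two Doi--Hopf structures (left on $M$, right on $M^*$), the dualization isomorphism $\gamma$ and its effect on the structure elements $\sum_i m_i\otimes h_i\otimes m_i^*$, and the antipode twist, so the danger is a sign- or side-error in tracking whether $S$ should land on $h_i$ or on $a_1$, and in what order. There is no deep difficulty — no bijectivity of $S$ is needed and the coassociativity/counit axioms for $\nu$ are routine from those for $\mu$ — but the compatibility computation must be done with the structure element $\sum_i m_i\otimes h_i\otimes m_i^*$ held fixed throughout and \eqref{propform} invoked at precisely the right moment. I would therefore present the proof as: (i) $\nu$ is a right $H$-comodule structure (cite the preliminary discussion plus the $S$-twist); (ii) the module-comodule compatibility, obtained by applying the antipode to \eqref{propform} after an evaluation, and this is where essentially all the work lies.
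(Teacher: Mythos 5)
Your plan matches the paper's proof: identify the compatibility $\nu(m^*a)=\sum m^*_{(0)}a_0\otimes m^*_{(1)}a_1$ with a scalar-valued identity via $M^*\otimes H\simeq Hom(M,H)$, and then deduce that identity from (\ref{propform}) together with the anti-multiplicativity of $S$. The only fuzzy spot is your final ``conversion'' of $S(a_1)$ back into $a_1$: in the paper this is done by starting from the right-hand side, applying (\ref{propform}) to $a_0$ and $m$ (which produces $S(a_1h_i)a_2=S(h_i)S(a_1)a_2$), and then invoking the antipode axiom $\sum S(a_1)a_2=\varepsilon(a)1$ to collapse the extra coaction leg.
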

\begin{proof}
We have to show that $\nu (m^*a)=\sum m^*_{(0)}a_0\otimes
m^*_{(1)}a_1$. Taking into account the comodule structure formula,
this is equivalent to
\begin{equation} \label{propeq1}
\sum_im^*(am_i)m_i^*\otimes S(h_i)=\sum_im^*(m_i)(m_i^*a_0)\otimes
S(h_i)a_1
\end{equation}
Since $M^*\otimes H\simeq Hom(M,H)$, equation (\ref{propeq1}) is
equivalent to the fact that for any $m\in M$ we have that
\begin{equation} \label{propeq2}
\sum_im^*(am_i)m_i^*(m)S(h_i)=\sum_im^*(m_i)(m_i^*a_0)(m)S(h_i)a_1
\end{equation}
But we have that \bea
\sum_im^*(m_i)(m_i^*a_0)(m)S(h_i)a_1&=&\sum_im^*(m_i)m_i^*(a_0m)S(h_i)a_1\\
&=&\sum_im^*(a_0m_i)m_i^*(m)S(a_1h_i)a_2\;\;\;\; (\mbox{by
}(\ref{propform})\mbox{ for }a_0\mbox{ and }m)\\
&=&\sum_im^*(am_i)m_i^*(m)S(h_i)\eea Thus (\ref{propeq2}) is
satisfied, and this ends the proof.
\end{proof}

If $A$ is a right $H$-comodule algebra, we denote by $\cdot$ the
left $H^*$-action on $A$ associated to the right $H$-coaction. We
define a new object $A^{(S^2)}$ as being just $A$ as an algebra,
and with a right $H$-coaction $a\mapsto \sum a_0\otimes S^2(a_1)$.
Then $A^{(S^2)}$ is a right $H$-comodule algebra. We denote by $*$
the associated left $H^*$-action on $A$. In a way similar to
Proposition \ref{dualDoiHopf}, one can see that if $M\in {\cal
M}_A^H$, a right Doi-Hopf module, is finite dimensional, then
$M^*\in \, _{A^{(S^2)}}{\cal M}^H$, where the $H$-coaction on
$M^*$ is given by the same formula as above for left Doi-Hopf
modules.

Now assume that $A$ is a finite dimensional right $H$-comodule
algebra. As a particular case of the discussion above, we see that
there exists $\sum_i a_i\otimes h_i\otimes a_i^*\in A\otimes
H\otimes A^*$ such that $\sum a_0\otimes
a_1=\sum_ia_i^*(a)a_i\otimes h_i$ for any $a\in A$. Then $A^*\in
{\cal M}^H$ with coaction $a^*\mapsto \sum a^*_0\otimes
a^*_1=\sum_ia^*(a_i)a_i^*\otimes S(h_i)$. We keep this notation
for the rest of the section.

We can regard $A$ as a left $(A,H)$-Doi-Hopf module, and also as a
right $(A,H)$-Doi-Hopf module. The first structure induces a
structure of a right $(A,H)$-Doi-Hopf module on $A^*$, while the
second one induces a structure of a left $(A^{(S^2)},H)$-Doi-Hopf
module on $A^*$.

\begin{definition}
The finite dimensional right $H$-comodule algebra $A$ is called\\
$\bullet$ left $H$-Frobenius if $A^{(S^2)}\simeq A^*$ in
$_{A^{(S^2)}}{\cal M}^H$.\\
$\bullet$ right $H$-Frobenius if $A\simeq A^*$ in ${\cal M}_A^H$.
\end{definition}

 The following characterizes the left $H$-Frobenius property. The
 first four equivalent conditions are
in the spirit of the classical ones for Frobenius algebras, taking
also care of the $H$-coaction. The last condition shows the
connection to the concept of Frobenius algebra in the category of
corepresentations of $H$.

\begin{theorem} \label{charFrobcomod}
Let $A$ be a finite dimensional right $H$-comodule algebra.
The following assertions are equivalent.\\
{\rm (1)} $A$ is left $H$-Frobenius.\\
{\rm (2)} There exists a non-degenerate associative bilinear form
$B:A\times A\rightarrow k$ with the property that
$B(b,(h^*S^2)\cdot
a)=B((h^*S)\cdot b,a)$ for any $a,b\in A$ and any $h^*\in H^*$.\\
{\rm (3)} $A$ has a hyperplane $\cal H$ which does not contain any
non-zero left ideal of $A$, and $(h^*S^2)\cdot
A\subseteq {\cal H}$ for any $h^*\in H^*$ with $h^*(1)=0$.\\
{\rm (4)} $A$ has a hyperplane $\cal H$ which does not contain any
non-zero subobject of $A^{(S^2)}$ in $_{A^{(S^2)}}{\cal M}^H$, and
$(h^*S^2)\cdot
A\subseteq {\cal H}$ for any $h^*\in H^*$ with $h^*(1)=0$.\\
{\rm (5)} $A^{(S^2)}$ is a Frobenius algebra in the monoidal
category ${\cal M}^H$.
\end{theorem}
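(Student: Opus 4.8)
The plan is to prove the chain of equivalences in a cycle or by grouping, since most implications are essentially unwinding definitions. I would organize it as follows: first establish $(1)\Leftrightarrow(2)$, then $(2)\Leftrightarrow(3)$, then $(3)\Leftrightarrow(4)$, and finally $(1)\Leftrightarrow(5)$ (or $(2)\Leftrightarrow(5)$), which is the conceptual heart of the statement.

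For $(1)\Leftrightarrow(2)$: An isomorphism $\varphi:A^{(S^2)}\to A^*$ in $_{A^{(S^2)}}{\cal M}^H$ is a bijective left $A$-linear, right $H$-colinear map. Define $B(a,b)=\varphi(a)(b)$. Left $A$-linearity of $\varphi$ (with respect to the $*$-action on the target, which comes from the left Doi-Hopf module structure on $A^*$ described before Proposition~\ref{dualDoiHopf}) translates into associativity $B(ab,c)=B(a,bc)$; bijectivity of $\varphi$ is exactly non-degeneracy of $B$. The $H$-colinearity condition is the subtle part: $\varphi$ must intertwine the coaction $a\mapsto\sum a_0\otimes S^2(a_1)$ on $A^{(S^2)}$ with the coaction $\nu$ on $A^*$. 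Dualizing the colinearity identity (testing against elements of $H^*$, as done in the proof of Proposition~\ref{dualDoiHopf}) should yield precisely the twisted symmetry $B(b,(h^*S^2)\cdot a)=B((h^*S)\cdot b,a)$. I expect this colinearity-to-form translation, keeping careful track of where $S$ versus $S^2$ enters, to be the main technical obstacle; it is the place where the ``left $H$-Frobenius'' notion genuinely differs from the classical one and from the right $H$-Frobenius notion.

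For $(2)\Leftrightarrow(3)$: Given a non-degenerate associative $B$, take $\varphi:A\to A^*$, $\varphi(a)=B(a,-)$, and set ${\cal H}=\ker\varphi(1)=\{a:B(1,a)=0\}$, a hyperplane (nonzero since $B$ is non-degenerate). Because $B$ is associative, $\varphi(1)$ vanishing on a left ideal $I$ means $B(1,Ia)=B(a,I\cdot?)$... more directly: if a left ideal $I\subseteq{\cal H}$ then $B(a,I)=B(1,aI)\subseteq B(1,I)=0$ for... one checks $I$ is in the radical of $B$, forcing $I=0$; conversely a non-degenerate associative $B$ can be recovered from any such hyperplane by a standard argument (choose a linear functional with kernel ${\cal H}$, set $B(a,b)=\lambda(ab)$, non-degeneracy follows from ${\cal H}$ containing no nonzero left ideal, and one must then also arrange the twisted-symmetry condition). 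The condition $(h^*S^2)\cdot A\subseteq{\cal H}$ for $h^*(1)=0$ is the reformulation, via the hyperplane, of the twisted symmetry part of $(2)$: it says the functional $\lambda$ defining ${\cal H}$ is suitably $H^*$-invariant under the $S^2$-twisted action, which is exactly what makes $\varphi$ colinear. This is again a dualization argument in the spirit of Proposition~\ref{dualDoiHopf}.

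For $(3)\Leftrightarrow(4)$: The only difference is ``non-zero left ideal of $A$'' versus ``non-zero subobject of $A^{(S^2)}$ in $_{A^{(S^2)}}{\cal M}^H$''. A subobject of $A^{(S^2)}$ in this category is an $H$-costable left ideal. Clearly $(4)\Rightarrow(3)$ is not immediate (fewer forbidden subspaces), but the point is: under the standing hypothesis that $(h^*S^2)\cdot A\subseteq{\cal H}$ for all $h^*$ with $h^*(1)=0$, the hyperplane ${\cal H}$ is itself an $H$-subcomodule of $A^{(S^2)}$ (this is a direct computation: the condition says exactly that ${\cal H}$ contains $(\mathrm{id}\otimes(\mathrm{id}-\varepsilon)\circ\rho^{S^2})(A)$ appropriately, hence is costable), and therefore any left ideal contained in ${\cal H}$ has its $H$-costable ``closure'' also contained in ${\cal H}$; this lets one pass between the two conditions. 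So $(3)$ and $(4)$ are equivalent once the coaction hypothesis is in force. Finally, $(1)\Leftrightarrow(5)$: by the characterization recalled in the introduction (Abrams/Müger/Street), $A^{(S^2)}$ being Frobenius in the monoidal category ${\cal M}^H$ means there is a coalgebra structure $(\delta,\varepsilon_A)$ on $A^{(S^2)}$ in ${\cal M}^H$ with $\delta$ an $A$-bimodule morphism; equivalently (the well-known reformulation in any monoidal category with left duals) there is an isomorphism $A^{(S^2)}\simeq (A^{(S^2)})^*$ of left $A^{(S^2)}$-modules internal to ${\cal M}^H$ — and $(A^{(S^2)})^*$ computed in ${\cal M}^H$, as a left Doi-Hopf module, is precisely $A^*$ with the structure described before Proposition~\ref{dualDoiHopf}. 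Thus $(5)$ unwinds to exactly $(1)$. I would make the identification of the internal dual of $A^{(S^2)}$ in ${\cal M}^H$ with the module $A^*$ of $(1)$ explicit, citing the discussion preceding Proposition~\ref{dualDoiHopf} and the left-duality of ${\cal M}^H$ noted in the introduction; no further obstacle is expected there beyond bookkeeping.
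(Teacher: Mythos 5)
Your outline for $(1)\Leftrightarrow(2)\Leftrightarrow(3)$ matches the paper's route and is sound in skeleton, but two of your steps have genuine gaps. First, the direct argument for $(4)\Rightarrow(3)$ does not work as stated. You are right that the hypothesis $(h^*S^2)\cdot A\subseteq{\cal H}$ for $h^*(1)=0$ makes ${\cal H}$ an $H^*$-stable subspace, hence a subcomodule of $A^{(S^2)}$, so the costable closure $H^**I$ of a left ideal $I\subseteq{\cal H}$ again lies in ${\cal H}$. But $H^**I$ is only a subcomodule, not a left ideal, so it is not a subobject in $_{A^{(S^2)}}{\cal M}^H$ and condition $(4)$ says nothing about it; and the subobject actually generated by $I$, namely $A\cdot(H^**I)$, need not be contained in ${\cal H}$, since ${\cal H}$ is not a left ideal. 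The correct mechanism (the paper's) is to pick $\pi$ with $Ker(\pi)={\cal H}$, set $\theta(b)(a)=\pi(ab)$, and prove that $\theta$ is $H$-colinear; this is the nontrivial antipode computation $((h^*S)\cdot b)a-b((h^*S^2)\cdot a)\in{\cal H}$, which you also wave at in your $(3)\Rightarrow(2)$ step as "arrange the twisted-symmetry condition" without supplying it. Then $Ker(\theta)=\{b\,|\,Ab\subseteq{\cal H}\}$ is a subobject contained in ${\cal H}$, hence zero, giving $(4)\Rightarrow(1)$ and, around the cycle, $(4)\Rightarrow(3)$.

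The more serious gap is $(1)\Leftrightarrow(5)$. The abstract duality theory in ${\cal M}^H$ (which has only left duals) says that a Frobenius algebra is isomorphic to its \emph{left dual} as a \emph{right} module over itself; applied to $A^{(S^2)}$ this yields $A^{(S^2)}\simeq(A^{(S^2)})^{\vee}$ in ${\cal M}^H_{A^{(S^2)}}$, where the left dual of $A^{(S^2)}$ carries the coaction $a^*\mapsto\sum_i a^*(a_i)a_i^*\ot S^3(h_i)$ (one applies $S$ to a coaction already twisted by $S^2$). Condition $(1)$ instead asserts an isomorphism of \emph{left} Doi--Hopf modules with $A^*$ carrying the coaction $a^*\mapsto\sum_i a^*(a_i)a_i^*\ot S(h_i)$, i.e.\ the dual of $A$, not of $A^{(S^2)}$. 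So your identification of "$(A^{(S^2)})^*$ computed in ${\cal M}^H$" with the module of condition $(1)$ is false: the two differ by $S^2$ on the $H$-leg and by the side of the $A$-action, and reconciling them is precisely the content of the theorem (this is why the paper invokes the abstract argument only for the right-Frobenius version, Theorem~\ref{charFrobcomodright}, where it genuinely applies). The paper's proof of $(1)\Rightarrow(5)$ instead constructs the coalgebra structure $\delta=(\theta^{-1}\ot\theta^{-1})\Delta^{cop}\theta$, $\varepsilon_A=\theta(1_A)$ and verifies by direct computation that $\Delta^{cop}$ and $\varepsilon_A$ are $H$-colinear, and $(5)\Rightarrow(3)$ extracts the hyperplane from the counit; none of this is bookkeeping, and none of it appears in your proposal.
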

\begin{proof}
If $\theta:A\rightarrow A^*$ is a linear map, let $B:A\times
A\rightarrow k$ be the bilinear map defined by $B(a,b)=\theta
(b)(a)$ for any $a,b\in A$.  We have that $\theta (h^**
a)(b)=B(b,h^** a)=B(b,(h^*S^2)\cdot a)$, and \bea (h^*\theta(a))(b)&=&\sum_i h^*(S(h_i))\theta (a)(a_i)a_i^*(b)\\
&=& \sum_i B(a_i,a)a_i^*(b)h^*(S(h_i))\\
&=&B(\sum_i a_i^*(b)h^*(S(h_i))a_i,a)\\
&=&B((h^*S)\cdot b,a)\eea

This shows that $\theta$ is a morphism of left $H^*$-modules, or
equivalently, of right $H$-comodules, if and only if
$B(b,(h^*S^2)\cdot a)=B((h^*S)\cdot b,a)$ for any $a,b\in A$ and
any $h^*\in H^*$. By the proof of \cite[Theorem 3.15]{lam},
$\theta$ is an isomorphism of left $A$-modules if and only if $B$
is associative and non-degenerate. Now it is clear that (1) and
(2) are equivalent.

Now we show that $(2)\Rightarrow (3)$. Let ${\cal H}=\{ a\in A|\;
B(1,a)=0\}$. If $I$ is a left ideal of $A$ such that $I\subseteq
{\cal H}$, then for any $a\in I$ and $x\in A$ we have that
$B(x,a)=B(1,xa)=0$, showing that $a=0$. Thus $I$ must be 0. On the
other hand, if $a\in A$ and $h^*\in H^*$, one has \bea
B(1,(h^*S^2)\cdot a-h^*(1)a)&=&B(1,(h^*S^2)\cdot a)-h^*(1)B(1,a)\\
&=&B((h^*S)\cdot 1,a)-h^*(1)B(1,a)\\
&=&B((h^*S)(1)1,a)-h^*(1)B(1,a)\\
&=&0\eea so (3) holds.

$(3)\Rightarrow (4)$ is obvious.

 $(4)\Rightarrow (1)$  Since for an arbitrary $h^*\in H^*$ we have that
$(h^*S^2-h^*(1)\varepsilon)(1)=0$, the last condition in (3) is
equivalent to $(h^*S^2)\cdot a-h^*(1)a\in {\cal H}$ for any $a\in
A$ and $h^*\in H^*$. Let $\pi:A\rightarrow k$ be a linear map
whose kernel is $\cal H$. Define $\theta:A^{(S^2)}\rightarrow A^*$
by $\theta (b)(a)=\pi (ab)$ for any $a,b\in A$, and let
$B(a,b)=\theta (b)(a)$ be the associated bilinear form. Then
$\theta (ay)(x)=\pi (xay)=\theta(y)(xa)=(a\theta(y))(x)$, so
$\theta$ is a morphism of left $A$-modules.

On the other hand, if $h^*\in H^*$ and $a,b\in A$, we have

\bea ((h^*S)\cdot b)a-b((h^*S^2)\cdot a)&=&\sum
(h^*S)(b_1)b_0a-b((h^*S^2)\cdot a) \\
&=&\sum (h^*S)(b_1a_1S(a_2))b_0a_0-b((h^*S^2)\cdot a) \\
&=&\sum (S(a_1)\rightharpoonup (h^*S))\cdot (ba_0)-b((h^*S^2)\cdot a)\\
&=&\sum ((S(a_1)\rightharpoonup (h^*S))\cdot
(ba_0)-(S(a_1)\rightharpoonup (h^*S))(1)ba_0) \\
&&+\sum (S(a_1)\rightharpoonup (h^*S))(1)ba_0-b((h^*S^2)\cdot a)\\
&=&\sum ((S(a_1)\rightharpoonup (h^*S))\cdot
(ba_0)-(S(a_1)\rightharpoonup (h^*S))(1)ba_0) \\
&&+\sum (h^*S^2)(a_1)ba_0-\sum (h^*S^2)(a_1)ba_0\\
&=&\sum ((S(a_1)\rightharpoonup (h^*S))\cdot
(ba_0)-(S(a_1)\rightharpoonup (h^*S))(1)ba_0)\in {\cal H} \eea

This shows that  $\pi (((h^*S)\cdot b)a)=\pi (b((h^*S^2)\cdot
a))$, which means that $\theta$ is $H^*$-linear, or equivalently,
right $H$-colinear. On the other hand $I=Ker(\theta)$, which is  a
subobject of $A^{(S^2)}$ in $_{A^{(S^2)}}{\cal M}^H$, so it must
be zero, showing that $\theta$ is injective, hence it is an
isomorphism.

$(1)\Rightarrow (5)$ Let $\theta:A^{(S^2)}\rightarrow A^*$ be an
isomorphism in $_{A^{(S^2)}}{\cal M}^H$. In particular $\theta$ is
an isomorphism of left $A$-modules, so $A$ is a Frobenius algebra.
By \cite{abrams1}, \cite{abrams2}, $A$ is a Frobenius algebra in
$k-mod$. Using the proof in the cited references, $A$ has a
coalgebra structure with comultiplication
$\delta=(\theta^{-1}\otimes \theta^{-1})\Delta^{cop}\theta$, where
$\Delta^{cop}$ is the comultiplication co-opposite to the
comultiplication $\Delta$ of $A^*$ induced by the multiplication
of $A$, and counit $\varepsilon_A=\theta (1_A)$. Moreover,
$\delta$ is a morphism of $A,A$-bimodules.

We show that $\delta$ and $\varepsilon$ are morphisms in ${\cal
M}^H$, and thus $A^{(S^2)}$ is a Frobenius algebra in the category
${\cal M}^H$. Since $\theta$ is an isomorphism in ${\cal M}^H$, in
order to show that $\delta$ is a morphism in ${\cal M}^H$, it is
enough to show that $\Delta^{cop}$ is so. As above, there exists
$\sum_i a_i\otimes h_i\otimes a_i^*\in A\otimes H\otimes A^*$ such
that $\sum a_0\otimes a_1=\sum_ia_i^*(a)a_i\otimes h_i$ for any
$a\in A$. Then $A^*\in {\cal M}^H$ with coaction $a^*\mapsto
\nu_1(a^*)=\sum a^*_0\otimes a^*_1=\sum_ia^*(a_i)a_i^*\otimes
S(h_i)$. Let $\nu_2:A^*\otimes A^*\rightarrow A^*\otimes
A^*\otimes H$ be the induced coaction. Then
$$(\Delta^{cop}\otimes I)\nu_1(a^*)=\sum_ia^*(a_i)(a_i^*)_2\otimes
(a_i^*)_1\otimes S(h_i)$$ and \bea (\nu_2\Delta^{cop})(a^*)&=&\sum
(a_2^*)_0\otimes (a_1^*)_0\otimes (a_2^*)_1(a_1^*)_1\\
&=&\sum_{i,j}a_2^*(a_j)a_1^*(a_i)a_j^*\otimes a_i^*\otimes
S(h_j)S(h_i)\\
&=&\sum_{i,j}a^*(a_ia_j)a_j^*\otimes a_i^*\otimes S(h_ih_j)\eea

If we use the natural isomorphism $A^*\otimes A^*\otimes H\simeq
Hom(A\otimes A,H)$, we see that showing that $(\Delta^{cop}\otimes
I)\nu_1(a^*)=(\nu_2\Delta^{cop})(a^*)$ is equivalent to showing
that for any $a,b\in A$ we have

\begin{equation} \label{eqinterm}
\sum_ia^*(a_i)(a_i^*)_2(a)(a_i^*)_1(b)S(h_i)=\sum_{i,j}a^*(a_ia_j)a_j^*(a)a_i^*(b)S(h_ih_j)
\end{equation}
Since $\sum (ba)_0\otimes (ba)_1=\sum b_0a_0\otimes b_1a_1$, we
have

\begin{equation} \label{eqcomalg}
\sum_i a_i^*(ba)a_i\otimes
h_i=\sum_{i,j}a_i^*(b)a_j^*(a)a_ia_j\otimes h_ih_j
\end{equation}

Now

\bea
\sum_{i,j}a^*(a_ia_j)a_j^*(a)a_i^*(b)S(h_ih_j)&=&\sum_ia_i^*(ba)a^*(a_i)S(h_i)
\;\;\;\mbox{by
}(\ref{eqcomalg}))\\&=&\sum_ia^*(a_i)(a_i^*)_1(b)(a_i^*)_2(a)S(h_i)\eea
which shows that (\ref{eqinterm}) holds.

Now since $\theta $ is $H$-colinear, we see that $$\sum_i
\theta(a)(a_i)a_i^*\otimes S(h_i)=\sum_i a_i^*(a)\theta
(a_i)\otimes S^2(h_i)$$ for any $a\in A$. This implies that
\begin{equation} \label{eqfinal}
\sum_i \theta(a)(a_i)a_i^*(1_A) S(h_i)=\sum_i a_i^*(a)\theta
(a_i)(1_A)S^2(h_i) \end{equation} Since $\sum_ia_i^*(1)a_i\otimes
h_i=1_A\otimes 1_H$, the left hand side in (\ref{eqfinal}) equals
$\theta(a)(1_A)S(1_H)=\theta(a)(1_A)1_H$. Since
$\theta(x)(y)=(x\theta(1_A))(y)=\theta(1_A)(yx)$, we obtain from
(\ref{eqfinal}) that
$$\theta (1_A)(a) 1_H=\sum_ia_i^*(a)\theta
(1_A)(a_i)S^2(h_i)$$ which means that $\varepsilon_A$ is
$H$-colinear.

$(5)\Rightarrow (3)$ Since $A^{(S^2)}$ is Frobenius in ${\cal
M}^H$, it has a coalgebra structure with comultiplication a
morphism of $A$-bimodules. Let
$\varepsilon_{A^{(S^2)}}:A^{(S^2)}\rightarrow k$ be the counit of
this coalgebra structure, which is a morphism in ${\cal M}^H$.
 Let ${\cal
H}=Ker(\varepsilon_{A^{(S^2)}})$, a hyperplane of $A$. By
\cite{abrams1}, \cite{abrams2}, $A$ is a Frobenius algebra with a
non-degenerate associative bilinear form $B:A\times A\rightarrow
k$, $B(b,a)=\varepsilon_{A^{(S^2)}}(ba)$. Then $\cal H$ does not
contain non-zero left ideals of $A$, since $Aa\subseteq {\cal H}$
implies that $B(A,a)=0$, so then $a=0$.  On the other hand
\bea \varepsilon_{A^{(S^2)}}((h^*S^2)\cdot a)&=&\varepsilon_{A^{(S^2)}}(\sum (h^*S^2)(a_1)a_0)\\
&=&(h^*S^2)(\sum \varepsilon_{A^{(S^2)}}(a_0)a_1)\\
&=&(h^*S^2)(\varepsilon_{A^{(S^2)}}(a)1)\\
&=&(h^*S^2)(1)\varepsilon_{A^{(S^2)}}(a)\\
&=&h^*(1)\varepsilon_{A^{(S^2)}}(a)\eea This implies that
$(h^*S^2)\cdot A\subseteq {\cal H}$ for any $h^*\in H^*$ with
$h^*(1)=0$.

\end{proof}

With arguments similar to the ones in the proof of Theorem
\ref{charFrobcomod}, one can prove the following. In fact, this
essentially follows from the theory of Frobenius algebras in
monoidal categories, see for instance \cite[Theorem 5.1]{bt}.
Indeed, any object $M$ of the category ${\cal M}^H$ has a left
dual $M^*$. Then a Frobenius algebra $A$ in the category ${\cal
M}^H$, which is known to have just $A$ as a left dual, must be
isomorphic to the left dual $A^*$, and this turns out to be even
an isomorphism of right $A$-modules.

\begin{theorem} \label{charFrobcomodright}
Let $A$ be a finite dimensional right $H$-comodule algebra.
The following assertions are equivalent.\\
{\rm (1)} $A$ is right $H$-Frobenius.\\
{\rm (2)} There exists a non-degenerate associative bilinear form
$B:A\times A\rightarrow k$ such that $B(h^*\cdot
a,b)=B(a,(h^*S)\cdot b)$ for any $a,b\in A$ and any $h^*\in H^*$.\\
{\rm (3)} $A$ has a hyperplane $\cal H$ which does not contain any
non-zero right ideal of $A$, and $h^*\cdot
A\subseteq {\cal H}$ for any $h^*\in H^*$ with $h^*(1)=0$.\\
{\rm (4)} $A$ has a hyperplane $\cal H$ which does not contain any
non-zero subobject of $A$ in ${\cal M}^H_A$, and $h^*\cdot
A\subseteq {\cal H}$ for any $h^*\in H^*$ with $h^*(1)=0$.\\
{\rm (5)} $A$ is a Frobenius algebra in the monoidal category
${\cal M}^H$.
\end{theorem}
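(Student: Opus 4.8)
The plan is to run the same chain of implications as in the proof of Theorem \ref{charFrobcomod}, transposing every module and comodule action to the opposite side and replacing $A^{(S^2)}$ by $A$ itself; because of the latter, the twist by $S^2$ occurring there collapses, and the only antipode that survives is the single $S$ in the $H$-coaction on $A^*$.

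\emph{$(1)\Leftrightarrow(2)$.} To a linear map $\theta\colon A\to A^*$ I associate the bilinear form $B(a,b)=\theta(a)(b)$ (the mirror of the choice in Theorem \ref{charFrobcomod}, since $A$ is now a right $A$-module and $A^*$ carries the right $A$-module structure dual to left multiplication). By \cite[Theorem 3.15]{lam}, $\theta$ is an isomorphism of right $A$-modules precisely when $B$ is non-degenerate and associative. Writing $\theta(h^*\cdot a)=h^*\cdot\theta(a)$, evaluating at $b\in A$, and using $\sum_i a_i^*(a)a_i\ot h_i=\sum a_0\ot a_1$ together with the coaction $a^*\mapsto\sum_i a^*(a_i)a_i^*\ot S(h_i)$ on $A^*$, the left-hand side becomes $B(h^*\cdot a,b)$ and the right-hand side becomes $B(a,(h^*S)\cdot b)$; hence $\theta$ is a morphism of right $H$-comodules — equivalently of left $H^*$-modules — iff $B(h^*\cdot a,b)=B(a,(h^*S)\cdot b)$ for all $a,b\in A$ and $h^*\in H^*$. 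As $A$ is right $H$-Frobenius exactly when some such $\theta$ is an isomorphism in ${\cal M}_A^H$, this gives $(1)\Leftrightarrow(2)$.

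\emph{$(2)\Rightarrow(3)\Rightarrow(4)\Rightarrow(1)$.} Given $B$ as in (2), set ${\cal H}=\{a\in A\mid B(a,1)=0\}$ (note $B(a,1)=B(1,a)$ by associativity). If a right ideal $I$ lies in ${\cal H}$ and $a\in I$, then $ax\in I\subseteq{\cal H}$ for all $x$, so $0=B(ax,1)=B(a,x)$ by associativity, whence $a=0$ by non-degeneracy; thus ${\cal H}$ contains no nonzero right ideal. For $h^*$ with $h^*(1)=0$, putting $b=1$ in the compatibility gives $B(h^*\cdot a,1)=B(a,(h^*S)\cdot 1)=B(a,h^*(1)\,1)=h^*(1)B(a,1)=0$, so $h^*\cdot A\subseteq{\cal H}$: (3) holds. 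Then $(3)\Rightarrow(4)$ is immediate, a subobject of $A$ in ${\cal M}_A^H$ being in particular a right ideal. For $(4)\Rightarrow(1)$, pick $\pi\colon A\to k$ with $\ker\pi={\cal H}$ and set $\theta(b)(a)=\pi(ba)$; this $\theta\colon A\to A^*$ is visibly right $A$-linear, and its $H$-colinearity reduces — via the equivalent form $h^*\cdot a-h^*(1)a\in{\cal H}$ of the last condition in (4) — to an antipode computation parallel to the one carried out for Theorem \ref{charFrobcomod}. Since $\ker\theta$ is then a subobject of $A$ in ${\cal M}_A^H$ contained in ${\cal H}$, it vanishes, so $\theta$ is injective and, $A$ being finite dimensional, an isomorphism.

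\emph{Equivalence with (5).} The shortest route is the one indicated after the statement: every object of ${\cal M}^H$ has a left dual, a Frobenius algebra $A$ in ${\cal M}^H$ has only $A$ as left dual, so by \cite[Theorem 5.1]{bt} it is isomorphic to its left dual $A^*$ via a map that is simultaneously an isomorphism of right $A$-modules, i.e. $A$ is right $H$-Frobenius. Conversely, an isomorphism $A\simeq A^*$ in ${\cal M}_A^H$ is in particular an isomorphism of right $A$-modules, so $A$ is an ordinary Frobenius algebra; carrying the Abrams coalgebra structure $\delta=(\theta^{-1}\ot\theta^{-1})\Delta^{cop}\theta$, $\varepsilon_A=\theta(1_A)$ through the same diagram chase as in $(1)\Rightarrow(5)$ of Theorem \ref{charFrobcomod} — now with the plain coaction on $A$ rather than the $S^2$-shifted one — shows that $\delta$ and $\varepsilon_A$ are morphisms in ${\cal M}^H$. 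Alternatively, one proves $(1)\Rightarrow(5)$ by that diagram chase and $(5)\Rightarrow(3)$ by taking ${\cal H}=\ker\varepsilon_A$ and computing $\varepsilon_A(h^*\cdot a)=h^*\bigl(\sum\varepsilon_A(a_0)a_1\bigr)=h^*(1)\varepsilon_A(a)$ from the $H$-colinearity of $\varepsilon_A$, exactly as in Theorem \ref{charFrobcomod}. The only real work, and hence the main obstacle, is bookkeeping: keeping each action on the correct side and making sure the single surviving antipode (rather than $S^2$) sits in the right place. No new conceptual ingredient beyond those used for Theorem \ref{charFrobcomod} is required.
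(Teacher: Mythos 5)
Your proposal is correct and follows essentially the same route as the paper, which itself only sketches this theorem by saying it is proved "with arguments similar to the ones in the proof of Theorem \ref{charFrobcomod}" or by appealing to \cite[Theorem 5.1]{bt} — exactly the two options you describe. The mirrored details you supply do check out; in particular, in $(4)\Rightarrow(1)$ one has $(h^*\cdot a)b-a((h^*S)\cdot b)=\sum\bigl((S(b_1)\rightharpoonup h^*)\cdot(ab_0)-(S(b_1)\rightharpoonup h^*)(1)ab_0\bigr)\in{\cal H}$, so only a single antipode appears, as you claim.
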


The connection between the left-right $H$-Frobenius properties is
given by the following.

\begin{theorem}
Let $H$ be a Hopf algebra, and let $A$ be a finite dimensional
right $H$-comodule algebra. The following assertions hold.\\
{\rm (1)} If $A$ is right $H$-Frobenius, then $A$ is left $H$-Frobenius.\\
{\rm (2)} If the antipode $S$ is injective and $A$ is left
$H$-Frobenius, then $A$ is also right $H$-Frobenius.
\end{theorem}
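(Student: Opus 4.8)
The plan is to run everything through the bilinear-form characterizations already established: condition~(2) of Theorem~\ref{charFrobcomodright} for the right $H$-Frobenius property, and condition~(2) of Theorem~\ref{charFrobcomod} for the left $H$-Frobenius property. The point I would exploit is that these two ``$H^*$-compatibility'' identities for a non-degenerate associative bilinear form $B$ on $A$ are, up to substituting $h^*$ by $h^*S$ and interchanging the two algebra arguments, the very same identity; consequently a single form $B$ witnesses both properties (under the hypotheses given), and no new bilinear form needs to be constructed. The three steps are: (i) rewrite the hypothesis as an identity for $B$ involving the $H^*$-action; (ii) perform the substitution $h^*\rightsquigarrow h^*S$ and swap $a$ and $b$; (iii) read off the other characterization.

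For part~(1): assume $A$ is right $H$-Frobenius, and let $B$ be a non-degenerate associative bilinear form with $B(h^*\cdot a,b)=B(a,(h^*S)\cdot b)$ for all $a,b\in A$, $h^*\in H^*$, as provided by Theorem~\ref{charFrobcomodright}. Since $h^*S\in H^*$, I would substitute $h^*S$ for $h^*$ in this identity; using $(h^*S)S=h^*S^2$ this yields
\[
B\bigl((h^*S)\cdot a,\ b\bigr)=B\bigl(a,\ (h^*S^2)\cdot b\bigr)\qquad(a,b\in A,\ h^*\in H^*),
\]
and interchanging the names of $a$ and $b$ gives $B(b,(h^*S^2)\cdot a)=B((h^*S)\cdot b,a)$, which is exactly the identity required in condition~(2) of Theorem~\ref{charFrobcomod} for the same $B$. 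As $B$ is still non-degenerate and associative, $A$ is left $H$-Frobenius. Note this half uses nothing about $S$.

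For part~(2): assume $S$ is injective and $A$ is left $H$-Frobenius, and let $B$ be a non-degenerate associative bilinear form with $B(b,(h^*S^2)\cdot a)=B((h^*S)\cdot b,a)$, from Theorem~\ref{charFrobcomod}. Interchanging $a$ and $b$ gives $B((h^*S)\cdot a,b)=B(a,(h^*S^2)\cdot b)$ for all $a,b\in A$, $h^*\in H^*$. Now I would invoke that the linear endomorphism $h^*\mapsto h^*S$ of $H^*$ is the transpose of $S$, hence surjective precisely because $S$ is injective; so every $g^*\in H^*$ can be written $g^*=h^*S$, and then $h^*S^2=g^*S$, which turns the identity into $B(g^*\cdot a,b)=B(a,(g^*S)\cdot b)$ for all $g^*\in H^*$. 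By condition~(2) of Theorem~\ref{charFrobcomodright}, $A$ is right $H$-Frobenius.

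The argument is essentially formal, and I expect the only delicate point to be the step in part~(2) where injectivity of $S$ is converted into surjectivity of its transpose on $H^*$: this is exactly what forces the hypothesis, since without it one only recovers the right-Frobenius identity for those $g^*$ lying in the image of $h^*\mapsto h^*S$. The remaining checks — that $h^*S\in H^*$, and that non-degeneracy and associativity of $B$ are untouched by these rewritings — are immediate.
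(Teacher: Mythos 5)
Your proof is correct, but it follows a genuinely different route from the paper's. The paper works with characterization (5) of Theorems \ref{charFrobcomod} and \ref{charFrobcomodright}: it takes a comultiplication $\delta:A\rightarrow A\otimes A$ witnessing that $A$ is Frobenius in ${\cal M}^H$ and chases a diagram involving $\rho$ and $I\otimes S^2$ to show the same $\delta$ is $H$-colinear for $A^{(S^2)}$, using injectivity of $S^2$ to reverse the implication in part (2). You instead work with characterization (2) of each theorem and observe that the two bilinear-form identities are related by the substitution $h^*\rightsquigarrow h^*S$ together with a swap of arguments; this is shorter and makes completely transparent where the hypothesis on $S$ enters, namely in the surjectivity of precomposition with $S$ on $H^*$. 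Two small points are worth making explicit. First, since $H$ need not be finite dimensional, the claim that $h^*\mapsto h^*S$ is surjective when $S$ is injective requires extending a functional from the subspace $S(H)$ to all of $H$, which is standard over a field but should be said. Second, conditions (2) of the two theorems are pure existence statements about a non-degenerate associative bilinear form satisfying the respective identity, so reusing the same $B$ is legitimate; you rightly do not need to track which module structure ($A$ versus $A^{(S^2)}$) the form came from. The paper's diagrammatic argument has the advantage of showing that the very same coalgebra structure on $A$ serves for both $A$ and $A^{(S^2)}$ as Frobenius algebras in ${\cal M}^H$, which is slightly more information than an existence transfer.
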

\begin{proof}
Let $\rho:A\rightarrow A\otimes H$ be the $H$-comodule structure
of $A$, and denote by $\rho_{A\otimes A}$ the $H$-comodule
structure on $A\otimes A$. Then the right $H$-comodule structures
of $A^{(S^2)}$, respectively $A^{(S^2)}\otimes A^{(S^2)}$, are
given by $(I\otimes S^2)\rho$, respectively $(I\otimes I\otimes
S^2)\rho_{A\otimes A}$. Let $\delta:A\rightarrow A\otimes A$ be a
linear map. Consider the following diagram.

\[
\xymatrix{
A \ar[r]^-\rho \ar[d]_{\delta} & A\otimes H \ar[d]^{\delta\otimes I}\ar[r]^-{I\otimes S^2}&A\otimes H \ar[d]^{\delta\otimes I}\\
A\otimes A \ar[r]^-{\rho_{A\otimes A}} & A\otimes A\otimes H
\ar[r]^-{I\otimes I\otimes S^2} & A\otimes A\otimes H}
\]

If $A$ is right Frobenius, one can choose such a $\delta$ which is
coassociative, a morphism of $A$-bimodules and a morphism of right
$H$-comodules. Then the left square of the diagram is commutative,
therefore, since the right square is always commutative, we get
that the big rectangle is commutative, showing that $\delta$ is
also a morphism of right $H$-comodules when regarded as
$\delta:A^{(S^2)}\rightarrow A^{(S^2)}\otimes A^{(S^2)}$. A
similar argument works for the counit, and we conclude that if $A$
has a coalgebra structure in ${\cal M}^H$ such that the
comultiplication is a morphism of $A$-modules, then $A^{(S^2)}$
has the same property. Thus $(1)$ holds. For $(2)$, we see that if
the big rectangle commutes, then so does the left square, since
$S^2$ is injective, and the rest of the argument is as in $(1)$.
\end{proof}

\section{$\sigma$-graded Frobenius algebras}
\label{ssigmaFrobenius}

Let $A=\bigoplus\limits_{g \in G}A_g$ be a $G$-graded algebra,
i.e. a $k$-algebra which is a direct sum of the subspaces $A_g$,
such that $A_gA_h\subseteq A_{gh}$ for any $g,h\in G$. This is
equivalent to $A$ being an algebra in ${\cal M}^{kG}$, the
monoidal category of $G$-graded vector spaces. The category
$_A{\cal M}^{kG}$ of left Doi-Hopf modules is just the category of
graded left $A$-modules, and we use the standard notation $A-gr$
for it. The category of graded right $A$-modules is denoted by
$gr-A$. If $M=\bigoplus\limits_{g \in G}M_g$ is a graded left
$A$-module of finite support, then the dual space $M^*$ is a
graded right $A$-module, with the grading $M^*=\bigoplus\limits_{g
\in G}(M^*)_g$, where $(M^*)_g=\{ f\in M^*|\; f(M_h)=0\;\mbox{for
any }h\neq g^{-1}\}$. In a similar way, the dual space of a graded
right $A$-module of finite support is a graded left $A$-module. In
particular, if $A$ has finite support, then $A^*$ is a graded left
$A$-module and a graded right $A$-module, i.e. it is a left $A$,
right $A$-graded bimodule.

If $M=\bigoplus\limits_{g \in G}M_g$ is a graded left $A$-module
and $\sigma \in G$, then the $\sigma$-suspension of $M$ is the
graded left $A$-module $M(\sigma)$, which is just $M$ as an
$A$-module, and has the grading defined by
$M(\sigma)_g=M_{g\sigma}$ for any $g\in G$.

If $M=\bigoplus\limits_{g \in G}M_g$ is a graded right $A$-module
and $\sigma \in G$, the $\sigma$-suspension of $M$, denoted by
$(\sigma)M$, is the $A$-module $M$ with the grading defined by
$(\sigma)M_g=M_{\sigma g}$.

\begin{lemma}\label{lemadualsuspensie}
1) If $M\in A-gr$ has finite support and $\sigma \in G$, then
$M(\sigma)^*= (\sigma^{-1})M^*$ in $gr-A$.\\
2) If $M\in gr-A$ has finite support and $\sigma \in G$, then
$(\sigma)M^*= M^*(\sigma^{-1})$ in $A-gr$.
\end{lemma}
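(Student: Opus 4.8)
The plan is to verify both identities directly by chasing the definitions of the dual grading and of the two kinds of suspension, since everything here is purely combinatorial bookkeeping on the grading group $G$. The two statements are mirror images of one another (swapping left/right modules and the two suspension operations), so I would prove 1) carefully and then remark that 2) follows by the symmetric argument. Note first that as right $A$-modules (ignoring gradings), $M(\sigma)^*$ and $M^*$ are literally the same space $M^*$ with the same $A$-action, and $(\sigma^{-1})M^*$ is also $M^*$ with the same action; so the only thing to check is that the homogeneous components coincide.

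First I would write down the homogeneous component of degree $g$ on each side. By definition of the dual grading for a graded left module, $(M(\sigma)^*)_g = \{ f \in M^* \mid f(M(\sigma)_h) = 0 \text{ for all } h \neq g^{-1}\}$. Since $M(\sigma)_h = M_{h\sigma}$, the condition $f(M(\sigma)_h)=0$ for all $h \neq g^{-1}$ is the same as $f(M_{h\sigma}) = 0$ for all $h \neq g^{-1}$; writing $h' = h\sigma$, this ranges over all $h' \neq g^{-1}\sigma$, so $(M(\sigma)^*)_g = \{ f \in M^* \mid f(M_{h'}) = 0 \text{ for all } h' \neq g^{-1}\sigma \} = (M^*)_{(g^{-1}\sigma)^{-1}} = (M^*)_{\sigma^{-1}g}$. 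On the other hand, by definition of the suspension of a graded right module, $((\sigma^{-1})M^*)_g = (M^*)_{\sigma^{-1}g}$. These agree, which is exactly the claim.

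For part 2), I would run the identical computation with the roles of left and right interchanged: for $M \in gr-A$, $(M^*)_g = \{ f \mid f(M_h)=0 \text{ for } h \neq g^{-1}\}$ is a graded left $A$-module, $(\sigma)M$ has $(\sigma)M_h = M_{\sigma h}$, and $(\sigma)M^*$ computes out to have degree-$g$ component $(M^*)_{g\sigma^{-1}}$, which matches $(M^*(\sigma^{-1}))_g = (M^*)_{g\sigma^{-1}}$. The only thing to double check is that the finite support hypothesis is genuinely used — it is needed to guarantee that $M^*$ is actually the direct sum of its homogeneous components $(M^*)_g$ (so that "$M^*$ with the dual grading" is a well-defined graded module in the first place), and this has already been recorded in the paragraph preceding the lemma, so I can just invoke it.

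Honestly there is no real obstacle here; the only place one can go wrong is getting the side on which $\sigma$ or $\sigma^{-1}$ acts mixed up, i.e. confusing $M(\sigma)_g = M_{g\sigma}$ with $M_{\sigma g}$, or mishandling the inversion coming from the dual (the degree-$g$ part of $M^*$ pairs with the degree-$g^{-1}$ part of $M$). So the "hard part," such as it is, is simply to be scrupulous about which group element multiplies on which side at each of the three substitutions. I would present the proof of 1) as a short displayed chain of equalities of the homogeneous components as above, and dispatch 2) with one sentence noting it is the left-right mirror of 1).
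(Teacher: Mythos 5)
Your computation of $(M(\sigma)^*)_g = (M^*)_{\sigma^{-1}g} = ((\sigma^{-1})M^*)_g$ via the substitution $h' = h\sigma$ is exactly the chain of equalities in the paper's proof, and the paper likewise dispatches part 2) as the symmetric case. The proposal is correct and matches the paper's argument.
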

\begin{proof}
Let $g\in G$. We have that \bea (M(\sigma)^*)_g&=&\{ f\in
M^*|f(M(\sigma)_{\lambda})=0 \mbox{ for any }\lambda\neq
g^{-1}\}\\
&=&\{ f\in M^*|f(M_{\lambda\sigma})=0 \mbox{ for any }\lambda\neq
g^{-1}\}\\
&=&\{ f\in M^*|f(M_{\tau})=0 \mbox{ for any }\tau\neq
g^{-1}\sigma\}\\
&=&(M^*)_{\sigma^{-1}g}\\
&=&((\sigma^{-1})M^*)_g \eea and (1) follows. (2) is similar.
\end{proof}

\begin{lemma} \label{lemadubludual}
If $M$ is a finite dimensional left (respectively right) graded
$A$-module, then $(M^*)^*\simeq M$ in $A-gr$ (respectively
$gr-A$).
\end{lemma}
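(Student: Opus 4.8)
The plan is to exhibit an explicit natural isomorphism $M \to (M^*)^*$ and check it respects both the $A$-module structure and the grading. First I would recall the classical linear-algebra evaluation map $\mathrm{ev}_M : M \to (M^*)^*$, $\mathrm{ev}_M(m)(f) = f(m)$ for $m \in M$, $f \in M^*$. Since $M$ is finite dimensional, $\mathrm{ev}_M$ is a bijection at the level of vector spaces; the only work is to identify it as a morphism in $A\text{-}gr$ (resp.\ $gr\text{-}A$). I will treat the left-module case; the right-module case is symmetric and I would just remark on it.

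The key steps are as follows. First, recall from the preliminaries that if $M \in A\text{-}gr$ is finite dimensional, then $M^* \in gr\text{-}A$ with grading $(M^*)_g = \{ f \in M^* \mid f(M_h) = 0 \text{ for all } h \neq g^{-1}\}$; applying this again, $(M^*)^* \in A\text{-}gr$ with $((M^*)^*)_g = \{\varphi \in (M^*)^* \mid \varphi((M^*)_h) = 0 \text{ for all } h \neq g^{-1}\}$. Second, I would verify that $\mathrm{ev}_M$ sends $M_g$ into $((M^*)^*)_g$: for $m \in M_g$ and $f \in (M^*)_h$ with $h \neq g^{-1}$, the defining property of $(M^*)_h$ says $f$ vanishes on $M_{h^{-1}}$, and since $g \neq h^{-1}$ we get $f(m) = 0$, i.e. $\mathrm{ev}_M(m)$ kills $(M^*)_h$. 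Hence $\mathrm{ev}_M$ is a graded map of degree $e$. Third, I would check $A$-linearity: for $a \in A$, $m \in M$, $f \in M^*$, one has $\mathrm{ev}_M(am)(f) = f(am)$, while $(a \cdot \mathrm{ev}_M(m))(f) = \mathrm{ev}_M(m)(fa) = (fa)(m) = f(am)$, using the definition of the right $A$-action on $M^*$ dual to the left action on $M$; so the two agree. Finally, since $\mathrm{ev}_M$ is a bijective graded $A$-module map, it is an isomorphism in $A\text{-}gr$.

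I do not expect a serious obstacle here: the result is the graded refinement of the standard fact that a finite dimensional module is reflexive, and all the content is bookkeeping with the degree conventions. The one point requiring mild care is the sign/inverse convention in the grading of duals — one must track that the dual of a degree-$g$ homogeneous component lives in degree $g^{-1}$, and that doing this twice returns to degree $g$; this is exactly what the computation in the second step confirms. For the right-module statement, the same evaluation map works with the roles of left and right $A$-actions interchanged, and the grading conventions for duals of right modules (giving a left module) compose back to the identity on degrees in the same way.
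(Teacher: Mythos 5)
Your proof is correct and follows exactly the route the paper takes: the paper's proof is the one-line observation that the natural (evaluation) isomorphism $M\simeq M^{**}$ of $A$-modules is graded, and your write-up simply supplies the routine degree and $A$-linearity verifications that the paper leaves to the reader.
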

\begin{proof} It is easy to check that the natural isomorphism
$M\simeq M^{**}$ of $A$-modules is in fact a graded isomorphism.
\end{proof}

\begin{proposition}\label{defgrFrob}
Let $A$ be a finite dimensional $G$-graded algebra, and let
$\sigma \in G$. The following conditions are equivalent.\\
(i) $A(\sigma)\simeq A^*$ in $A-gr$.\\
(ii) $(\sigma)A\simeq A^*$ in $gr-A$.
\end{proposition}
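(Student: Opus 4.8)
The plan is to show both implications by applying the duality functor $(-)^*$, which interchanges $A-gr$ and $gr-A$, together with the suspension identities recorded in Lemma \ref{lemadualsuspensie} and the reflexivity of finite dimensional graded modules from Lemma \ref{lemadubludual}. The two conditions are dual to each other, so I expect the argument to be essentially one computation performed in each direction.

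For $(i)\Rightarrow(ii)$, I would start from a graded isomorphism $A(\sigma)\simeq A^*$ in $A-gr$ and apply $(-)^*$, which carries graded left $A$-modules to graded right $A$-modules and preserves isomorphisms. This gives $A(\sigma)^*\simeq (A^*)^*$ in $gr-A$. By Lemma \ref{lemadualsuspensie}(1), $A(\sigma)^*=(\sigma^{-1})(A^*)$ in $gr-A$, and by Lemma \ref{lemadubludual}, $(A^*)^*\simeq A$ in $gr-A$. Hence $(\sigma^{-1})(A^*)\simeq A$ in $gr-A$. Now I would apply the suspension functor $(\sigma)(-)$ on $gr-A$ to both sides: since $(\sigma)\big((\sigma^{-1})N\big)=N$ for any $N\in gr-A$ (the suspensions compose along multiplication in $G$ and $\sigma\sigma^{-1}=e$), the left side becomes $A^*$, while the right side becomes $(\sigma)A$. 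Thus $(\sigma)A\simeq A^*$ in $gr-A$, which is exactly (ii). One should check that $(\sigma)A$ here really means the $\sigma$-suspension of $A$ regarded as a graded right $A$-module; since $A$ is a graded bimodule this is unambiguous.

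For $(ii)\Rightarrow(i)$, the argument is symmetric: starting from $(\sigma)A\simeq A^*$ in $gr-A$, apply $(-)^*$ to get $((\sigma)A)^*\simeq (A^*)^*$ in $A-gr$; use Lemma \ref{lemadualsuspensie}(2) to rewrite $((\sigma)A)^*=A^*(\sigma^{-1})$, and Lemma \ref{lemadubludual} to identify $(A^*)^*\simeq A$, obtaining $A^*(\sigma^{-1})\simeq A$ in $A-gr$; finally apply the suspension $(-)(\sigma)$ on $A-gr$, using $\big(M(\sigma^{-1})\big)(\sigma)=M$, to conclude $A^*\simeq A(\sigma)$ in $A-gr$.

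The only real point requiring care — and the one I would state explicitly rather than leave to the reader — is that the duality functor $(-)^*$ is not only an anti-equivalence at the level of $A$-module structures but is compatible with the gradings and the suspension operations in the precise bookkeeping sense used above; this is exactly what Lemmas \ref{lemadualsuspensie} and \ref{lemadubludual} package, so no new computation is needed. Everything else is formal manipulation with the two suspension functors, whose composition law $(\tau)((\sigma)M)=(\tau\sigma)M$ on $gr-A$ and $M(\sigma)(\tau)=M(\sigma\tau)$ on $A-gr$ follows immediately from the definitions $(\sigma)M_g=M_{\sigma g}$ and $M(\sigma)_g=M_{g\sigma}$. Hence there is no substantive obstacle; the proposition is a clean consequence of graded duality.
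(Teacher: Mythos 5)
Your argument is correct and is essentially the paper's own proof: both apply the duality functor to the isomorphism $A(\sigma)\simeq A^*$, then use Lemma \ref{lemadualsuspensie} and Lemma \ref{lemadubludual} to obtain $(\sigma^{-1})A^*\simeq A$ in $gr-A$, and finish by resuspending. Your version just spells out the suspension bookkeeping slightly more explicitly than the paper does.
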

\begin{proof}
(i)$\Rightarrow $(ii) Since $A(\sigma)\simeq A^*$ in $A-gr$, by
using Lemma \ref{lemadualsuspensie} and Lemma \ref{lemadubludual},
we have that $A\simeq A^{**}\simeq (A(\sigma))^*\simeq
(\sigma^{-1})A^*$ in $gr-A$.  Therefore $A^*\simeq (\sigma)A$ in $gr-A$. \\
(ii)$\Rightarrow$ (i) is similar.
\end{proof}

\begin{definition}
A finite dimensional $G$-graded algebra $A$ is called
$\sigma$-graded Frobenius if it satisfies the equivalent
conditions in Proposition \ref{defgrFrob}. Clearly, $e$-graded
Frobenius means just (left, or equivalently right) $kG$-Frobenius,
and we simply say in this case that $A$ is graded Frobenius.
\end{definition}

\begin{remark} \label{remarcagrFrobenius}
(1) If $A$ is $\sigma$-graded Frobenius for some $\sigma$, then
obviously $A$ is Frobenius as an algebra.

 (2) If $A$ is
$\sigma$-graded Frobenius, and $H$ is a subgroup of $G$ such that
$\sigma\in H$, then $A_H=\bigoplus\limits_{g\in H}A_g$ is
$\sigma$-graded Frobenius as an $H$-graded algebra. Indeed, this
follows from the fact that an isomorphism $A(\sigma)\simeq A^*$ of
$G$-graded left $A$-modules, induces, by restriction to the
components of degrees in $H$, an isomorphism of $H$-graded left
$A_H$-modules $A_H(\sigma)\simeq A_H^*$. In particular, if $A$ is
graded Frobenius, then $A_e$ is a Frobenius algebra.

 (3) If $A$ is
$\sigma$-graded Frobenius, then for $\tau\in G$ we have that $A$
is $\tau$-graded Frobenius if and only if $\tau\in \sigma G\{
A\}$, where $G\{ A\}=\{ g\in G\;|\; A(g)\simeq A \mbox{ in
}A-gr\}$ is the inertia group of $A$.

 (4) If
$A=\bigoplus\limits_{g\in G}A_g$ is a $G$-graded algebra and $H$
is a normal subgroup of $G$, then $A$ also has a $G/H$-grading
given by $A_{\hat{\sigma}}=\bigoplus\limits_{g\in
\hat{\sigma}}A_g$ for any $\hat{\sigma}\in G/H$. If $A$ is graded
Frobenius with the $G$-grading, then it is also graded Frobenius
with the $G/H$-grading.
\end{remark}

\section{Characterizing $\sigma$-graded Frobenius} \label{scaract}

The characterization of the Frobenius property in the ungraded
case (see for example \cite[Theorem 3.15]{lam}) carries on to the
graded case. The proof is on the same line. For $\sigma=e$, the
result is just a particular case of Theorem \ref{charFrobcomod}.

\begin{theorem}\label{caractgrFrobenius}
Let $A=\bigoplus\limits_{g \in G}A_g$ be a finite dimensional
$G$-graded algebra, and let $\sigma \in G$. The following assertions are equivalent.\\
(i) $A$ is $\sigma$-graded Frobenius.\\
(ii) There exists a non-degenerate associative bilinear form
$B:A\times A\rightarrow k$ such that $B(r_\tau,r_\mu)=0$ for any
$r_\tau\in A_\tau$ and $r_\mu\in A_\mu$ with $\tau\mu\neq
\sigma$.\\
(iii) There exists a hyperplane $\cal H$ in $A$ such that
$A_\tau\subseteq {\cal H}$ for any $\tau\neq \sigma$, and $\cal H$
does not contain nonzero graded left ideals.
\end{theorem}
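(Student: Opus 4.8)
The plan is to prove the three equivalences cyclically, $(i)\Rightarrow(ii)\Rightarrow(iii)\Rightarrow(i)$, mimicking the classical argument for ungraded Frobenius algebras but keeping careful track of the grading at each step. The bridge between module-theoretic data and bilinear forms is the standard correspondence: given a linear map $\theta:A\to A^*$, set $B(a,b)=\theta(b)(a)$; then $\theta$ is left $A$-linear iff $B$ is associative, and $\theta$ is bijective iff $B$ is non-degenerate (this is \cite[Theorem 3.15]{lam}). So the only genuinely new ingredient is translating the \emph{grading} condition on $\theta$ into the vanishing condition on $B$.

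For $(i)\Rightarrow(ii)$: assume $\theta:A(\sigma)\to A^*$ is an isomorphism in $A-gr$, and let $B$ be the associated form, which is automatically associative and non-degenerate. The point is that $\theta$ is homogeneous of degree $e$ as a map $A(\sigma)\to A^*$, i.e. $\theta(A(\sigma)_\tau)\subseteq (A^*)_\tau$, that is $\theta(A_{\tau\sigma})\subseteq (A^*)_\tau$. Recalling $(A^*)_\tau=\{f\mid f(A_\mu)=0\text{ for }\mu\neq\tau^{-1}\}$, this says: if $b\in A_{\tau\sigma}$ and $a\in A_\mu$ with $\mu\neq\tau^{-1}$, then $\theta(b)(a)=B(a,b)=0$. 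Rewriting with $b\in A_\nu$ (so $\tau=\nu\sigma^{-1}$), we get $B(a,b)=0$ whenever $a\in A_\mu$, $b\in A_\nu$ and $\mu\neq\sigma\nu^{-1}$, i.e. $\mu\nu\neq\sigma$, which is exactly (ii).

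For $(ii)\Rightarrow(iii)$: given such a $B$, I would take $\cal H$ to be a hyperplane chosen so that $A_\sigma\not\subseteq\cal H$ but $A_\tau\subseteq\cal H$ for all $\tau\neq\sigma$ — concretely, pick any linear functional $\pi:A\to k$ that vanishes on $\bigoplus_{\tau\neq\sigma}A_\tau$ and is nonzero on $A_\sigma$ (possible since $B$ non-degenerate forces $A_\sigma\neq 0$; indeed if $A_\sigma=0$ then $B(a,b)=0$ for all homogeneous $a,b$ — as $\tau\mu$ would never equal $\sigma$ — contradicting non-degeneracy), and set ${\cal H}=\ker\pi$. One can in fact take $\pi$ built from $B$ itself, e.g. $\pi(x)=B(1,x)$ after noting $B(1,x)$ detects the degree-$\sigma$ part; then if a graded left ideal $I\subseteq\cal H$, for homogeneous $a\in I$ and any $x\in A$ we get $B(x,a)=\pi(xa)=0$, and non-degeneracy gives $a=0$, so $I=0$. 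The mild subtlety is confirming $A_\tau\subseteq\cal H$ for $\tau\neq\sigma$, which is immediate from the definition of $\pi$.

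For $(iii)\Rightarrow(i)$: let $\pi:A\to k$ be a linear map with $\ker\pi=\cal H$, normalized (rescaling) so that $\pi$ restricted to $A_\sigma$ is the relevant projection — the key structural fact is that $\pi(A_\tau)=0$ for $\tau\neq\sigma$, so $\pi$ is homogeneous of degree $\sigma^{-1}$ as an element of $A^*$. Define $\theta:A(\sigma)\to A^*$ by $\theta(b)(a)=\pi(ab)$. Then $\theta$ is left $A$-linear exactly as in the proof of $(4)\Rightarrow(1)$ in Theorem~\ref{charFrobcomod}; it is graded because if $b\in A_\nu=A(\sigma)_{\nu\sigma^{-1}}$ then $\theta(b)(a)=\pi(ab)$ vanishes unless $ab\in A_\sigma$, i.e. unless $a\in A_{\sigma\nu^{-1}}$, which says $\theta(b)\in(A^*)_{\nu\sigma^{-1}}$ — matching the degree of $b$ in $A(\sigma)$. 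Finally $\theta$ is injective: $\ker\theta$ is a graded left ideal (graded because $\theta$ is a homogeneous map), and it lies in $\cal H$ since $\theta(b)=0$ in particular forces $\pi(b)=\theta(b)(1)=0$; by hypothesis $\ker\theta=0$, and a dimension count gives bijectivity, so $A(\sigma)\simeq A^*$ in $A-gr$.

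I expect the main obstacle to be purely bookkeeping: getting the direction of the grading shifts right in the identifications $A(\sigma)_\tau=A_{\tau\sigma}$ and $(A^*)_\tau=\{f\mid f(A_\mu)=0,\ \mu\neq\tau^{-1}\}$, so that the condition ``$\theta$ homogeneous of degree $e$'' lands precisely on ``$B(r_\tau,r_\mu)=0$ for $\tau\mu\neq\sigma$'' and on ``$\pi$ supported in degree $\sigma$''. Once the conventions are pinned down, every step is a routine transcription of the classical Frobenius proof with subscripts attached, using Lemma~\ref{lemadubludual} implicitly only insofar as $A$ is reflexive. No genuinely new idea beyond Theorem~\ref{charFrobcomod} is needed; the $\sigma\neq e$ case simply threads the suspension through the same argument.
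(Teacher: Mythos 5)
Your proof is correct and follows essentially the same route as the paper's: the same correspondence $B(a,b)=\theta(b)(a)$, the same hyperplane $\ker B(1,-)$ for (ii)$\Rightarrow$(iii), and the same map $\theta(b)(a)=\pi(ab)$ for (iii)$\Rightarrow$(i), with only cosmetic differences (you track the suspension on $A$ rather than on $A^*$, and you show $\ker\theta\subseteq{\cal H}$ directly instead of exhibiting $Ay\subseteq{\cal H}$ for a homogeneous $y\in\ker\theta$). The degree bookkeeping ($A(\sigma)_\tau=A_{\tau\sigma}$, $(A^*)_\tau$ killing $A_\mu$ for $\mu\neq\tau^{-1}$, hence $\mu\nu\neq\sigma$) all checks out.
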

\begin{proof}
(i)$\Rightarrow$(ii) Since $A(\sigma)\simeq A^*$, we have that
$A\simeq A^*(\sigma^{-1})$ in $A-gr$. Let $\phi:A\rightarrow
A^*(\sigma)$ be an isomorphism in $A-gr$. Then $B:A\times
A\rightarrow k$, $B(x,y)=\phi(y)(x)$ is associative and
non-degenerate. Let $r_\tau\in A_\tau$ and $r_\mu\in A_\mu$ with
$\tau\mu\neq \sigma$. Then $\phi(r_\mu)\in
A^*(\sigma^{-1})_\mu=A^*_{\mu\sigma^{-1}}$, so
$\phi(r_\mu)(A_\lambda)=0$ for any $\lambda\neq
(\mu\sigma^{-1})^{-1}=\sigma\mu^{-1}$. Since $\tau\neq
\sigma\mu^{-1}$, we have that
$B(r_\tau,r_\mu)=\phi(r_\mu)(r_\tau)=0$.

 (ii)$\Rightarrow$ (iii)
As in the ungraded case, the set ${\cal H}=\{ r\in A\;|\;
B(1,r)=0\}$ is a hyperplane which does not contain nonzero left
ideals of $A$, so then neither nonzero graded left ideals. Since
$B(1,r_\tau)=0$ for any $\tau\neq \sigma$, we see that
$A_\tau\subseteq {\cal H}$ for any $\tau\neq \sigma$.

 (iii)$\Rightarrow (i)$ Let $\pi:A\rightarrow
A$ be a linear map with $Ker(\pi)={\cal H}$. Define
$\phi:A(\sigma)\rightarrow A^*$ by $\phi(y)(x)=\pi(xy)$ for any
$x,y\in A$, which is a morphism of left $A$-modules. We show that
$\phi$ is a morphism in $A-gr$. Indeed, let $r_{\tau\sigma}\in
A_{\tau\sigma}=A(\sigma)_{\tau}$. Then for $\lambda\neq \tau^{-1}$
we have that $\lambda\tau\sigma\neq \sigma$, so
$A_{\lambda\tau\sigma}\subseteq H$. Then
$\phi(r_{\tau\sigma})(A_\lambda)=\pi (A_\lambda
r_{\tau\sigma})\subseteq \pi(A_{\lambda\tau\sigma})=0$, thus
$\phi(r_{\tau\sigma})\in A^*_\tau$.

Finally, $\phi$ is injective, and hence an isomorphism. Indeed, if
the graded left ideal $Ker(\phi)$ is nonzero, let $y\in Ker(\phi)$
be a nonzero homogeneous element. Since $\pi (xy)=\phi(y)(x)=0$
for any $x\in A$, we have that $Ay\subseteq Ker(\pi)={\cal H}$, a
contradiction.
\end{proof}

\begin{corollary} \label{Frobeniusstronglygraded}
Let $A$ be a strongly graded finite dimensional $G$-graded
algebra. Then $A$ is graded Frobenius if and only if $A_e$ is
Frobenius.
\end{corollary}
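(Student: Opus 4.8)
The plan is to read ``graded Frobenius'' as ``$e$-graded Frobenius'' and apply Theorem~\ref{caractgrFrobenius} with $\sigma=e$. For the implication ``$A$ graded Frobenius $\Rightarrow$ $A_e$ Frobenius'' no hypothesis on the grading is needed: it is the special case of Remark~\ref{remarcagrFrobenius}(2) for the trivial subgroup of $G$; equivalently, restricting an isomorphism $A\simeq A^*$ in $A-gr$ to the homogeneous components of degree $e$ yields an isomorphism $A_e\simeq (A^*)_e\simeq A_e^*$ of left $A_e$-modules. So the real content is the converse, and this is where the strong grading will be used.

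For the converse I would assume $A_e$ is Frobenius and $A$ is strongly graded. Using the ungraded Frobenius characterization --- the case $G=\{e\}$ of Theorem~\ref{caractgrFrobenius}, or \cite[Theorem 3.15]{lam} --- fix a hyperplane $\mathcal{H}_e$ of $A_e$ containing no nonzero left ideal of $A_e$, and set $\mathcal{H}=\mathcal{H}_e\oplus\bigoplus_{\tau\neq e}A_\tau$. This is a hyperplane of $A$ with $A_\tau\subseteq\mathcal{H}$ for every $\tau\neq e$, so by condition~(iii) of Theorem~\ref{caractgrFrobenius} it is enough to show that $\mathcal{H}$ contains no nonzero graded left ideal of $A$.

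The argument for that runs as follows. Let $I=\bigoplus_{g\in G}I_g$ be a graded left ideal with $I\subseteq\mathcal{H}$. Then $I_e=I\cap A_e$ is a left ideal of $A_e$ contained in $A_e\cap\mathcal{H}=\mathcal{H}_e$, hence $I_e=0$. For an arbitrary $\tau\in G$, since $I$ is a left ideal and $A_{\tau^{-1}}A_\tau\subseteq A_e$, we get $A_{\tau^{-1}}I_\tau\subseteq A_e\cap I=I_e=0$; now strong grading enters through the identity $A_\tau A_{\tau^{-1}}=A_e$, which gives $A_eI_\tau=A_\tau(A_{\tau^{-1}}I_\tau)=0$, and since $1_A\in A_e$ this forces $I_\tau=0$. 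Therefore $I=0$, as required, and $A$ is graded Frobenius.

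The only delicate point --- and the sole place where strong grading is indispensable --- is this last ``descent'' from $I_e=0$ to $I_\tau=0$ via $A_\tau A_{\tau^{-1}}=A_e$. For a general grading the statement genuinely fails, because a graded algebra can carry nonzero graded left ideals supported entirely in degrees different from $e$, on which the Frobenius property of $A_e$ imposes no constraint.
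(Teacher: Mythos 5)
Your proof is correct and follows essentially the same route as the paper: the forward direction via restriction to the degree-$e$ component, and the converse by building the hyperplane $\mathcal{H}_e\oplus\bigoplus_{\tau\neq e}A_\tau$ and invoking condition (iii) of Theorem~\ref{caractgrFrobenius}. Your explicit descent argument $A_eI_\tau=A_\tau(A_{\tau^{-1}}I_\tau)=0$ just spells out the step the paper compresses into ``$I$ is zero, since $A$ is strongly graded.''
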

\begin{proof}
If $A$ is graded Frobenius, then $A_e$ is Frobenius by Remark
\ref{remarcagrFrobenius}(2). Conversely, assume that $A_e$ is a
Frobenius algebra, and let $\cal T$ be a hyperplane in $A_e$ which
does not contain nonzero left ideals. Then ${\cal H}={\cal
T}\oplus (\bigoplus\limits_{g\in G\setminus \{e\}}A_g)$ is a
hyperplane in $A$, and clearly $A_g\subseteq {\cal H}$ for any
$g\neq e$. If $I$ is a graded left ideal of $A$ contained in $\cal
H$, then $I_e\subseteq {\cal T}$. Since $I_e$ is a left ideal in
$A_e$, it must be zero. Then $I$ is also zero, since $A$ is
strongly graded.
\end{proof}

Let $A=\bigoplus\limits_{g \in G}A_g$ be a finite dimensional
$G$-graded algebra. Let $\sigma \in G$, and let ${\cal C}_\sigma$
be the localizing subcategory of $A-gr$ consisting of all objects
$M$ such that $M_\sigma=0$. Let $t_{{\cal C}_\sigma}$ be the
associated radical, i.e. for a graded $A$-module $M$, $t_{{\cal
C}_\sigma}(M)$ is the largest graded submodule of $M$ whose
homogeneous component of degree $\sigma$ is zero. A graded left
$A$-module $M$ is called $\sigma$-faithful if $t_{{\cal
C}_\sigma}(M)=0$. The graded ring $A$ is called left
$\sigma$-faithful if it is $\sigma$-faithful as a left graded
$A$-module, i.e. for any non-zero $a_g\in A_g$, $g\in G$, we have
that $A_{\sigma g^{-1}}a_g\neq 0$. Similarly, $A$ is called right
$\sigma$-faithful if for any non-zero $a_g\in A_g$, $g\in G$, we
have that $a_gA_{g^{-1}\sigma}\neq 0$.

We recall from \cite[Section 2.5]{nvo} that the coinduced functor
$Coind:A_e-mod\rightarrow A-gr$ associates to a left $A_e$-module
$N$ the left $A$-module $Coind(N)=Hom_{A_e}(A,N)$ (with $A$-module
structure induced by the right $A$-module structure of $A$), with
the grading such that the homogeneous component of degree $g$
consists of all the maps $f\in Hom_{A_e}(A,N)$ vanishing on $A_h$
for any $h\neq g^{-1}$. A right adjoint of the functor
$(-)_{\sigma}:A-gr\rightarrow A_e-mod$, which associates to a
graded module its homogeneous component of degree $\sigma$, is
$T_{\sigma^{-1}} \circ Coind$, where
$T_{\sigma^{-1}}:A-gr\rightarrow A-gr$ is the isomorphism of
categories taking a graded module $M$ to its
$\sigma^{-1}$-suspension $M(\sigma^{-1})$. The unit of this
adjunction is defined as follows. For $M\in A-gr$, let
$\nu_M:M\rightarrow Coind(M_{\sigma})(\sigma^{-1})$,
$\nu_M(x_\lambda)(a)=a_{\sigma\lambda^{-1}}x_\lambda$ for any
$\lambda\in G$, $x_\lambda\in M_\lambda$, $a\in A$. Then $\nu_M$
is a morphism in $A-gr$, moreover $Ker(\nu_M)=t_{{\cal
C}_{\sigma}}(M)$. Now we have the following characterization of
the $\sigma$-graded Frobenius property.

\begin{theorem} \label{thcaractgrFrob}
Let $A=\bigoplus\limits_{g \in G}A_g$ be a finite dimensional
$G$-graded algebra, and let $\sigma \in G$. The following assertions are equivalent.\\
(i) $A$ is $\sigma$-graded Frobenius.\\
(ii) $A_{\sigma}\simeq A_e^*$ as left $A_e$-modules, and $A$
is left $\sigma$-faithful.\\
(iii) $A_{\sigma}\simeq A_e^*$ as right $A_e$-modules, and $A$ is
right $\sigma$-faithful.
\end{theorem}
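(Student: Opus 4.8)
The plan is to prove the equivalences by going through condition (i) via the characterization already available in Theorem \ref{caractgrFrobenius}, and to use the adjunction between $(-)_\sigma$ and $T_{\sigma^{-1}}\circ Coind$ together with the unit $\nu_M$ recalled just before the statement. By the left-right symmetry established in Proposition \ref{defgrFrob} (and the analogous statements for right modules, together with Lemma \ref{lemadualsuspensie}), it suffices to prove (i)$\Leftrightarrow$(ii); then (i)$\Leftrightarrow$(iii) follows by applying the same argument to $A^{op}$, or by dualizing, since (i) is left-right symmetric.

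\medskip

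\noindent\textbf{(i)$\Rightarrow$(ii).} Suppose $A$ is $\sigma$-graded Frobenius, so there is an isomorphism $\phi:A(\sigma)\to A^*$ in $A-gr$. Restricting $\phi$ to the degree-$e$ component gives an isomorphism of left $A_e$-modules $A(\sigma)_e = A_\sigma \xrightarrow{\ \sim\ } (A^*)_e$; and $(A^*)_e = \{f\in A^*\mid f(A_h)=0\text{ for }h\neq e\}$ is canonically identified with $A_e^*$ as a left $A_e$-module. This gives the first half of (ii). For left $\sigma$-faithfulness: an isomorphism $A(\sigma)\simeq A^*$ means $A(\sigma)$ is a cogenerator-type object; more directly, using Theorem \ref{caractgrFrobenius}, take the non-degenerate associative form $B$ with $B(r_\tau,r_\mu)=0$ unless $\tau\mu=\sigma$. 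If $a_g\in A_g$ is nonzero with $A_{\sigma g^{-1}}a_g=0$, then for any homogeneous $x_\tau\in A_\tau$ we get $B(x_\tau,a_g)=B(1,x_\tau a_g)$; this is zero unless $\tau g=\sigma$, i.e. $x_\tau\in A_{\sigma g^{-1}}$, in which case $x_\tau a_g=0$ and the value is again zero. Hence $B(A,a_g)=0$, contradicting non-degeneracy. So $A$ is left $\sigma$-faithful.

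\medskip

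\noindent\textbf{(ii)$\Rightarrow$(i).} This is the substantive direction and where I expect the main work. Assume $A_\sigma\simeq A_e^*$ as left $A_e$-modules and $A$ is left $\sigma$-faithful. The strategy is to build a morphism $\phi:A(\sigma)\to A^*$ in $A-gr$ and show it is an isomorphism. Consider the unit of the adjunction applied to $M=A$: $\nu_A:A\to Coind(A_\sigma)(\sigma^{-1})$ in $A-gr$, with $Ker(\nu_A)=t_{{\cal C}_\sigma}(A)$, which is $0$ precisely because $A$ is left $\sigma$-faithful. So $\nu_A$ is injective. Now $Coind(A_\sigma)=Hom_{A_e}(A,A_\sigma)\cong Hom_{A_e}(A,A_e^*)\cong Hom_{A_e}(A\otimes_{A_e} A_e, k) \cong A^*$ — here I use the $A_e$-module isomorphism $A_\sigma\simeq A_e^*$ and adjunction/hom-tensor identities, checking this is an isomorphism in $gr-A$ or $A-gr$ with the appropriate grading shift. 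Tracking the gradings: $Coind(A_\sigma)(\sigma^{-1})$ should be identified with $A^*(\text{something})$, and after the shift one gets an injective morphism $A(\sigma)\hookrightarrow A^*$ (possibly after composing with a suspension isomorphism) in $A-gr$. Since $A$ is finite dimensional and $\dim A = \dim A^*$, injectivity forces this to be an isomorphism. The main obstacle is bookkeeping: verifying that all these natural identifications are graded isomorphisms with the correct degree shifts, and that the composite really lands as a map $A(\sigma)\to A^*$ rather than some other suspension. An alternative, perhaps cleaner, route is to pick a linear functional: let $\lambda\in A^*$ be the composite $A\twoheadrightarrow A_\sigma \xrightarrow{\sim} A_e^* \xrightarrow{\text{eval at }1} k$, i.e. projecting to degree $\sigma$ and applying the image of $1_{A_e}$-dual; then define $B(x,y)=\lambda(xy)$ and verify via Theorem \ref{caractgrFrobenius}(ii) that $B$ is associative (automatic), homogeneous of the right type (from the grading of $\lambda$), and non-degenerate (using left $\sigma$-faithfulness plus the fact that $A_\sigma\simeq A_e^*$ is a faithful $A_e$-module, so the restricted pairing $A_{\sigma g^{-1}}\times A_g\to k$ is non-degenerate on the right). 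I would likely present the functional/bilinear-form version as the main argument and only gesture at the adjunction picture.

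\medskip

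\noindent\textbf{(ii)$\Leftrightarrow$(iii).} Apply (i)$\Leftrightarrow$(ii) to the opposite algebra $A^{op}$, which is $G^{op}$-graded with $(A^{op})_g = A_{g^{-1}}$ appropriately reindexed, or more simply observe that (i) is left-right symmetric by Proposition \ref{defgrFrob}, and that "$A_\sigma\simeq A_e^*$ as left $A_e$-modules plus left $\sigma$-faithful" transforms into "$A_\sigma\simeq A_e^*$ as right $A_e$-modules plus right $\sigma$-faithful" under this symmetry; the dualization $M\mapsto M^*$ interchanges left and right faithfulness and, via Lemma \ref{lemadubludual}, preserves the relevant isomorphism classes. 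I expect this part to be a short formal argument once (i)$\Leftrightarrow$(ii) is in hand.
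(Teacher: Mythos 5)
Your proposal is correct, but for the substantive direction (ii)$\Rightarrow$(i) your preferred argument is genuinely different from the paper's. The paper constructs an explicit chain of isomorphisms $Coind(A_\sigma)=Hom_{A_e}(A,A_\sigma)\simeq Hom_{A_e}(A,A_e^*)\simeq Hom_k(A_e\otimes_{A_e}A,k)\simeq A^*$, checks by hand that the composite is graded, applies the suspension $T_{\sigma^{-1}}$, and precomposes with the unit $\nu_A$ (injective by left $\sigma$-faithfulness) to get an injection $A\hookrightarrow A^*(\sigma^{-1})$, hence a bijection by dimension count --- i.e.\ it carries out exactly the ``bookkeeping'' you flag as the obstacle in your first route. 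Your second route, which you say you would present as the main one, bypasses this entirely: take $\lambda=\mathrm{ev}_1\circ\theta\circ\mathrm{pr}_\sigma$ and feed $B(x,y)=\lambda(xy)$ into Theorem \ref{caractgrFrobenius}. This works: the one point needing care is non-degeneracy, and the computation $\lambda(a_e w)=\theta(a_e w)(1)=\theta(w)(a_e)$ for $w\in A_\sigma$ shows that $\lambda$ does not vanish on any nonzero $A_e$-submodule of $A_\sigma$ (since $\theta$ is an injective $A_e$-map into $A_e^*$ and the pairing $A_e\times A_e^*\to k$ is non-degenerate); combined with $A_{\sigma g^{-1}}a_g\neq 0$ this kills any nonzero graded left ideal inside $Ker(\lambda)$, which is precisely condition (iii) of Theorem \ref{caractgrFrobenius}. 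So your version is shorter and more elementary, at the cost of routing through the bilinear-form characterization rather than exhibiting the isomorphism $A(\sigma)\simeq A^*$ functorially; the paper's coinduction argument has the side benefit that the explicit map $\mu:A\to Hom_{A_e}(A,A_e)$ is reused later (in the Frobenius-functor section). Your treatment of (i)$\Rightarrow$(ii) (faithfulness via the form rather than via $e$-faithfulness of $A^*$) and of (i)$\Leftrightarrow$(iii) (opposite algebra / Proposition \ref{defgrFrob}) are harmless variants of what the paper does.
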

\begin{proof}
(i)$\Rightarrow$(ii) An isomorphism $A(\sigma)\simeq A^*$ of
graded left $A$-modules induces an isomorphism of left
$A_e$-modules between the homogeneous components of degree $e$,
i.e. $A_{\sigma}\simeq A_e^*$.

On the other hand, $A^*$ is $e$-faithful. Indeed, if $f\in A^*_g$,
$g\in G$, and $A_{g^{-1}}f=0$, then for any $r_{g^{-1}}\in
A_{g^{-1}}$ we have
$f(r_{g^{-1}})=f(1r_{g^{-1}})=(r_{g^{-1}}f)(1)=0$. But $f(A_h)=0$
for any $h\neq g^{-1}$, so $f$ must be zero. Now $A(\sigma)$ is
$e$-faithful, which implies that $A$ is $\sigma$-faithful.

 (ii)$\Rightarrow$(i) Let
$\theta:A_{\sigma}\rightarrow A_e^*$ be an isomorphism of left
$A_e$-modules, and let
$$\overline{\theta}:Hom_{A_e}(A,A_{\sigma})\rightarrow
Hom_{A_e}(A,A_e^*), \overline{\theta}(f)=\theta f$$ be the induced
isomorphism of left $A$-modules. Now let
$$\phi:Hom_{A_e}(A,A_e^*)\rightarrow Hom_k(A_e\otimes _{A_e}A,k)$$
defined by $$ \phi(F)(a\otimes r)=F(r)(a) \; \mbox{for } F\in
Hom_{A_e}(A,A_e^*), r\in A, a\in A_e,$$ be the natural isomorphism
of left $A$-modules. Finally let $\gamma:A\rightarrow A_e\otimes
_{A_e}A$ be the natural isomorphism, and
$$\overline{\gamma}:Hom_k(A_e\otimes _{A_e}A,k)\rightarrow
Hom_k(A,k), \overline{\gamma}(f)=f\gamma$$ be the induced
isomorphism of left $A$-modules.

Therefore $\overline{\gamma}\phi
\overline{\theta}:Coind(A_{\sigma})\rightarrow A^*$ is an
isomorphism of left $A$-modules. We show that it is also a
morphism in $A-gr$. Indeed, let $u\in Coind(A_{\sigma})_g$, so
$u:A\rightarrow A_{\sigma}$ and $u(A_h)=0$ for any $h\neq g^{-1}$.
Then for $a_h\in A_h$, with $h\neq g^{-1}$, we have  \bea
((\overline{\gamma}\phi
\overline{\theta})(u))(a_h)&=&(\overline{\gamma}\phi
(\theta u))(a_h)\\
&=&(\phi (\theta u))\gamma)(a_h)\\
&=&\phi(\theta u)(1\otimes a_h)\\
&=&(\theta u)(a_h)(1)\\
&=&\theta (u(a_h))(1)\\
&=&0 \eea

By applying the isomorphism of categories $T_{\sigma^{-1}}$, we
can regard as $\overline{\gamma}\phi
\overline{\theta}:Coind(A_{\sigma})(\sigma^{-1})\rightarrow
A^*(\sigma^{-1})$, a graded isomorphism. On the other hand
$\nu_A:A\rightarrow Coind(A_{\sigma})(\sigma^{-1})$ is an
injective morphism of graded $A$-modules, since its kernel is
$t_{{\cal C}_\sigma}(A)$ and $A$ is left $\sigma$-faithful. Hence
$\overline{\gamma}\phi \overline{\theta}\nu_A:A\rightarrow
A^*(\sigma^{-1})$ is an injective morphism in $A-gr$. Since $A$
and $A^*$ have the same dimension, this injective morphism must be
bijective, so then $A\simeq A^*(\sigma^{-1})$, and
$A(\sigma)\simeq A^*$.

(i)$\Leftrightarrow$ (iii) is similar to (i)$\Leftrightarrow$
(ii).
\end{proof}

In particular, we obtain the description of Frobenius algebras in
the category of $G$-graded vector spaces.

\begin{corollary}
Let $A$ be a graded algebra. Then $A$ is graded Frobenius if and
only if $A_e$ is Frobenius and $A$ is left (or right)
$e$-faithful.
\end{corollary}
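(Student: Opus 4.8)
The plan is to obtain this as an immediate specialization of Theorem \ref{thcaractgrFrob} to $\sigma=e$. By definition, $A$ is graded Frobenius precisely when it is $e$-graded Frobenius, so the corollary amounts to unwinding conditions (ii) and (iii) of that theorem in the case $\sigma=e$. Since $A_\sigma=A_e$ when $\sigma=e$, condition (ii) reads: $A_e\simeq A_e^*$ as left $A_e$-modules, and $A$ is left $e$-faithful; condition (iii) reads: $A_e\simeq A_e^*$ as right $A_e$-modules, and $A$ is right $e$-faithful.

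The one remaining point is to identify the module-isomorphism clauses with the statement ``$A_e$ is Frobenius''. Here I would invoke the classical definition recalled in the introduction: a finite dimensional algebra $B$ is Frobenius exactly when $B\simeq B^*$ as left $B$-modules. Applied with $B=A_e$, this turns the first clause of (ii) into ``$A_e$ is Frobenius''. For the ``right'' version I would note the standard left-right symmetry of the Frobenius property for ordinary finite dimensional algebras — e.g. $B$ is Frobenius iff $B^{\mathrm{op}}$ is, or, more concretely, a Frobenius form $B\times B\to k$ is non-degenerate on both sides — so that ``$A_e\simeq A_e^*$ as right $A_e$-modules'' is also equivalent to ``$A_e$ is Frobenius''. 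Hence (ii) is equivalent to ``$A_e$ is Frobenius and $A$ is left $e$-faithful'', and (iii) to ``$A_e$ is Frobenius and $A$ is right $e$-faithful''.

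Putting these together with the equivalences (i)$\Leftrightarrow$(ii)$\Leftrightarrow$(iii) of Theorem \ref{thcaractgrFrob} yields that $A$ is graded Frobenius if and only if $A_e$ is Frobenius and $A$ is left $e$-faithful, and equally if and only if $A_e$ is Frobenius and $A$ is right $e$-faithful; in particular the two faithfulness conditions are interchangeable here. I do not expect any genuine obstacle: the proof is essentially a dictionary translation, and the only thing beyond quoting Theorem \ref{thcaractgrFrob} is the elementary left-right symmetry of the classical Frobenius property, which is well known and can be dispatched in a line.
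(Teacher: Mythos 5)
Your proposal is correct and matches the paper's intent exactly: the corollary is stated as an immediate specialization of Theorem \ref{thcaractgrFrob} to $\sigma=e$, with the clause $A_e\simeq A_e^*$ translated into ``$A_e$ is Frobenius'' via the classical definition, and the left--right symmetry of the ordinary Frobenius property taking care of the ``(or right)'' variant. The paper gives no further argument, so nothing is missing.
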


We recall that a graded algebra $A$ is called graded semisimple if
$A$ is a direct sum of minimal graded left ideals. This is
equivalent to  $A-gr$ being a semisimple category.

\begin{corollary}
A graded semisimple algebra is graded Frobenius. In particular, a
graded division algebra is graded Frobenius.
\end{corollary}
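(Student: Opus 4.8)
The plan is to deduce this corollary from the previous one, which characterizes graded Frobenius algebras as those finite dimensional graded algebras $A$ with $A_e$ Frobenius and $A$ left $e$-faithful. So it suffices to check these two conditions for a graded semisimple algebra $A$.

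First I would verify left $e$-faithfulness. Recall $A$ is left $e$-faithful if for every nonzero homogeneous $a_g\in A_g$ we have $A_{g^{-1}}a_g\neq 0$. In a graded semisimple algebra, $A$ as a graded left module decomposes into minimal graded left ideals, and it is standard (from \cite{nvo}) that such an $A$ is semisimple in the ungraded sense, hence has zero Jacobson radical; equivalently $A-gr$ is a semisimple category, so every graded left $A$-module is projective, in particular $A$ is a generator-cogenerator and no nonzero graded submodule can be killed in degree $e$. More concretely: if $A_{g^{-1}}a_g=0$ then $Aa_g$ is a graded left ideal whose degree-$g^{-1}$ component is $A_{g^{-1}}a_g=0$; but in a graded semisimple algebra $A_{g^{-1}}\cdot(\text{nonzero graded left ideal})$ cannot vanish — one sees this because $Aa_g$ is a direct summand of $A$ as a graded module, so $1=e_1+e_2$ with $a_g\in Aa_g$ lying in the summand, forcing $a_g=e_1 a_g$ with $e_1$ of degree $e$, hence $a_g\in A_e a_g\subseteq A_{g^{-1}}a_g$ — wait, that needs $e_1\in A_e$, which holds since the identity of a graded ring lies in $A_e$, but idempotents need not be homogeneous. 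The clean argument: $t_{{\cal C}_e}(A)$ is a graded left ideal of $A$, hence a direct summand since $A-gr$ is semisimple; if it were nonzero it would contain a minimal graded left ideal $I$ with $I_e=0$, and then $I$ is a nonzero graded module with $AI\neq 0$ but $I_e=0$. Since $A$ is a generator of $A-gr$, $\mathrm{Hom}_{A-gr}(A,I)=I\neq 0$ means $I$ has a nonzero homogeneous element; writing such an element as $1\cdot x = $ (sum of components), the degree-$e$ part must contribute — more carefully, I would instead just invoke: every simple object of $A-gr$ has nonzero degree-$e$ component after a suitable suspension, and $t_{{\cal C}_e}$ being stable under suspension-free submodules forces it to be $0$; I would locate the precise statement in \cite{nvo}.

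Second, $A_e$ is Frobenius: since $A$ is graded semisimple, $A$ is semisimple as an ungraded ring, hence artinian with zero radical; then $A_e$, being a direct summand of $A$ as a ring-with-identity? No — $A_e$ is not in general a summand. Instead use that $A-gr$ semisimple implies $A_e-\mathrm{mod}$ is semisimple: the functor $(-)_e:A-gr\to A_e-\mathrm{mod}$ together with the induction functor $A\otimes_{A_e}-$ shows $A_e$ is a direct summand of the semisimple ring, or more directly, $A_e$ is semisimple because it is a corner-type subring; I would cite the standard fact from \cite{nvo} that $A$ graded semisimple implies $A_e$ semisimple. A semisimple artinian algebra is Frobenius (it is a product of matrix algebras over division rings, each of which is Frobenius). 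So both hypotheses of the previous corollary hold and $A$ is graded Frobenius.

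For the last sentence: a graded division algebra $\Delta$ (every nonzero homogeneous element invertible) is graded semisimple — indeed $\Delta-gr$ has no proper nonzero graded submodules of $\Delta$ other than via support shifts, and every graded $\Delta$-module is free, so $\Delta-gr$ is semisimple — hence graded Frobenius by the first part. I expect the main obstacle to be pinning down the left $e$-faithfulness cleanly: the subtlety is that idempotents realizing the direct-summand decomposition of a graded module need not be homogeneous, so the argument must be phrased module-theoretically in terms of $t_{{\cal C}_e}$ being a direct summand that, in a semisimple graded category, cannot be a nonzero object killed in degree $e$ — which ultimately rests on the structure theory of graded semisimple rings in \cite{nvo}.
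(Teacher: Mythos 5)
Your proposal is correct and follows essentially the same route as the paper: verify that $A_e$ is semisimple (hence Frobenius) and that $A$ is left $e$-faithful (the paper cites \cite[Proposition 2.9.6]{nvo} for precisely this), then apply the preceding corollary, i.e.\ Theorem \ref{thcaractgrFrob} with $\sigma=e$. Incidentally, your worry about non-homogeneous idempotents is unfounded: for a decomposition $A=I\oplus J$ into graded left ideals, comparing degree-$e$ components of $1=u+v$ shows $u\in I_e$ and $v\in J_e$ (a fact the paper itself uses in the proof of Theorem \ref{teoremalocal}), so taking $I=t_{{\cal C}_e}(A)$ gives $u\in I_e=0$ and hence $I=Iu=0$, which closes the gap you flagged without any further appeal to structure theory.
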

\begin{proof}
If $A$ is graded semisimple, then it is easy to see that $A_e$ is
a semisimple algebra, in particular it is Frobenius. On the other
hand, by \cite[Proposition 2.9.6]{nvo}, if $A$ is graded
semisimple, the $A$ is $e$-faithful. Now $A$ is graded Frobenius
by Theorem \ref{thcaractgrFrob}.
\end{proof}

In Section \ref{sectionexamples} we will give examples graded
algebras which are Frobenius, but not graded Frobenius. However,
we have the following.

\begin{theorem} \label{teoremalocal}
Let $A=\bigoplus\limits_{g \in G}A_g$ be a finite dimensional
graded algebra such that $A$ is a Frobenius algebra and $A_e$ is a
local ring. Then there exists $\sigma \in G$ such that $A$ is
$\sigma$-graded Frobenius.
\end{theorem}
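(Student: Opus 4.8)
The plan is to use the characterization from Theorem \ref{thcaractgrFrob}: it suffices to find $\sigma \in G$ such that $A_\sigma \simeq A_e^*$ as left $A_e$-modules and $A$ is left $\sigma$-faithful. Since $A$ is Frobenius as an algebra, there is a non-degenerate associative bilinear form $B:A\times A \rightarrow k$; equivalently (by \cite[Theorem 3.15]{lam}) there is a hyperplane $\mathcal H$ (the kernel of a Frobenius functional $\pi$) containing no nonzero left ideal. The first step is to locate the correct degree $\sigma$. Because $A_e$ is local, it has a unique maximal left ideal $J=J(A_e)$, and $A_e/J$ is a division ring (in fact a finite field extension of $k$, but all we need is that it is simple). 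Consider the socle of $A^*$ as a left $A$-module, or equivalently the right annihilator structure coming from $B$; the idea is that the restriction of $\pi$ to the homogeneous components must be nonzero on exactly one component. Concretely, I would argue that since $A_e$ is local and $A$ is Frobenius, $A_e$ is itself Frobenius (its left socle is the right annihilator of $J$ and is a simple left $A_e$-module), so $A_e^*$ is generated by a single element $\lambda$ as a left $A_e$-module, and $A_e^* \simeq A_e/J^{\perp}$-type considerations force the grading degree to be pinned down.

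The key step is the following. Let $\pi$ be a Frobenius functional, with associated form $B(a,b)=\pi(ab)$. Write $\pi = \sum_{g\in G}\pi_g$ where $\pi_g$ is the restriction of $\pi$ to $A_g$ (extended by zero). I claim there is a unique $\sigma$ with $\pi_\sigma \neq 0$ when restricted suitably — more precisely, I would modify $\pi$ within the set of Frobenius functionals so that it is supported on a single homogeneous component. The mechanism: since $A_e$ is local, $A^*$ as a left $A$-module has a unique maximal graded submodule (because $A/\mathrm{rad}$ considerations: $A/J(A)$ where $J(A)$ contains $\bigoplus_{g\neq e}A_g + J(A_e)$-type radical, and the graded Jacobson radical has $A/\mathrm{rad}$ concentrated in degree $e$ and equal to $A_e/J(A_e)$, which is simple). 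Hence $A^*$, being Frobenius-dual to $A$, has simple graded socle, concentrated in a single degree; call the degree of that socle component $\sigma^{-1}$ in the appropriate convention. Then $A^*$ is a graded left $A$-module whose socle is simple and lives in one degree, and a dimension/length count together with $A \simeq A^*$ as ungraded left $A$-modules shows $A^*(\sigma^{-1}) \simeq A$ in $A$-$gr$. Equivalently one checks directly that $A_\sigma \simeq A_e^*$ as left $A_e$-modules: the pairing $B$ restricted to $A_{\sigma\mu^{-1}}\times A_\mu$ must, for $\mu=e$, be non-degenerate on the left in $A_\sigma$ against $A_e$, which is exactly $A_\sigma \simeq A_e^*$, and the uniqueness of $\sigma$ comes from locality of $A_e$ forcing $\pi|_{A_e}$ (hence the whole relevant interaction) to factor through the unique simple.

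Finally I must verify left $\sigma$-faithfulness: for every nonzero homogeneous $a_g\in A_g$, $A_{\sigma g^{-1}}a_g \neq 0$. This follows from non-degeneracy of $B$: if $A_{\sigma g^{-1}}a_g = 0$ then, using that $B(A_\tau, A_\mu)=0$ unless $\tau\mu=\sigma$ (which I arrange by the choice of $\pi$ supported in degree $\sigma$), we would get $B(A, a_g) = \sum_\tau B(A_\tau, a_g) = B(A_{\sigma g^{-1}}, a_g) = \pi(A_{\sigma g^{-1}}a_g) = 0$, contradicting non-degeneracy; hence $a_g = 0$. Then Theorem \ref{thcaractgrFrob}(ii)$\Rightarrow$(i) gives that $A$ is $\sigma$-graded Frobenius.

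The main obstacle I anticipate is the first part: producing the single degree $\sigma$ and showing $A_\sigma \simeq A_e^*$ as left $A_e$-modules. The slick route is to show one can choose the Frobenius functional $\pi$ to vanish on all homogeneous components except one — this is where locality of $A_e$ is essential (in general, e.g. for $A_e$ not local, $\pi$ need not be concentrated in one degree, which is exactly why the non-examples in Section \ref{sectionexamples} exist). I would prove this by looking at the graded Jacobson radical $J^g(A)$: since $A_e$ is local, $A/J^g(A) \cong A_e/J(A_e)$ is a graded division ring concentrated in degree $e$; dualizing, the graded socle of $A^*$ is simple and homogeneous, say of degree $\tau$, and then choosing $\pi$ to be (the transpose of) a generator of that socle line gives a Frobenius functional with $\ker\pi \supseteq \bigoplus_{g\neq \sigma}A_g$ for $\sigma = \tau^{-1}$, at which point condition (iii) of Theorem \ref{caractgrFrobenius} is met and we are done — though I will likely present it directly through Theorem \ref{thcaractgrFrob} as above, since that isolates the $A_e$-module statement cleanly.
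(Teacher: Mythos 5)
Your reduction to Theorem \ref{thcaractgrFrob} (or to condition (iii) of Theorem \ref{caractgrFrobenius}) and the faithfulness verification at the end are fine, but the step you yourself flag as the obstacle --- producing the degree $\sigma$ and a Frobenius functional concentrated on $A_\sigma$ --- is where the argument breaks, and the mechanism you propose does not fill it. First, the claim that $A/J^g(A)\cong A_e/J(A_e)$ is concentrated in degree $e$ is false: for $A=kC_2$ (a graded division ring with $A_e=k$ local) one has $J^g(A)=0$, so $A/J^g(A)=A$ is graded-simple but not homogeneous; consequently the graded socle of $A^*$ need not live in a single degree. Second, and more seriously, even when the graded socle of $A^*$ is homogeneous, a nonzero element of it is in general \emph{not} a Frobenius functional. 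Take $A=k[x]/(x^2)$ with $A_{\hat{0}}=k$, $A_{\hat{1}}=kx$: the graded socle of $A^*$ is $k\cdot 1^*$, of degree $\hat{0}$, and $\ker(1^*)=kx$ is a nonzero graded left ideal, so your prescription yields $\sigma=\hat{0}$ and a functional violating condition (iii) (indeed $A$ is not $\hat{0}$-graded Frobenius, only $\hat{1}$-graded Frobenius); the correct functional is $x^*$, which is a homogeneous \emph{generator} of $A^*$, detected by the graded top, not by the socle.

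What your plan actually needs is a homogeneous generator of $A^*$ in $A-gr$, i.e.\ the statement that the graded top of $A^*$ is graded-simple; equivalently, that the ungraded isomorphism $A\simeq A^*$ can be upgraded to $A(\sigma)\simeq A^*$ for some $\sigma$. This transfer from the ungraded to the graded isomorphism is the real content of the theorem and is not supplied by an unspecified ``dimension/length count.'' The paper does it as follows: locality of $A_e$ gives that $A$ and $A^*$ are indecomposable objects of $A-gr$; the Frobenius hypothesis makes them injective in $A-gr$; applying the right adjoint of the forgetful functor $A-gr\rightarrow A-mod$ to $A\simeq A^*$ yields $\bigoplus_{g\in G}A(g)\simeq\bigoplus_{g\in G}A^*(g)$ in $A-gr$, and Azumaya's theorem on decompositions into indecomposable injectives forces $A^*\simeq A(\sigma)$ for some $\sigma$. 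Some input of this Krull--Schmidt/Azumaya type (or an equivalent projective-cover argument for the graded-indecomposable graded-projective module $A^*$) is indispensable and is missing from your proposal.
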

\begin{proof}
The graded left $A$-module $A$ is indecomposable. Indeed, if
$A=X\oplus Y$ in $A-gr$, then $X=Au$ and $Y=Av$ for some
orthogonal idempotents $u,v$ in $A_e$ such that $u+v=1$. Since
$A_e$ is local, we have $u=0$ or $v=0$, so $X=0$ or $Y=0$. We also
have that $A^*$ is an indecomposable graded left module. Indeed,
$A^*=X\oplus Y$ in $A-gr$ implies that $A^{**}=X^*\oplus Y^*$ in
$gr-A$. Since $A^{**}\simeq A$ in $gr-A$, and $A$ is
indecomposable in $gr-A$ (the argument above works also on the
right), we see that $X^*=0$ or $Y^*=0$, therefore $X=0$ or $Y=0$.

Since $A$ is Frobenius, the left $A$-modules $A$ and $A^*$ are
injective, hence they are also injective objects in $A-gr$, by
\cite[Corollary 2.3.2]{nvo}. Now we consider the forgetful functor
$U:A-gr\rightarrow A-mod$, which has a right adjoint $F$ (see
\cite[Theorem 2.5.1]{nvo}). It is known (see \cite[Proposition
2.5.4]{nvo}) that for any $M\in A-gr$, we have that $F(U(M))\simeq
\bigoplus\limits_{g \in G}M(g)$. Then if we apply $F$ to the
isomorphism of left $A$-modules $A\simeq A^*$, we obtain an
isomorphism $\bigoplus\limits_{g \in G}A(g)\simeq
\bigoplus\limits_{g \in G}A^*(g)$ in $A-gr$. In both sides we have
direct sums of injective indecomposable objects in $A-gr$, so by
Azumaya Theorem, we have that the decompositions are equivalent,
in particular there is $\sigma \in G$ such that $A^*\simeq
A(\sigma)$.
\end{proof}

If we drop the condition that $A_e$ is local, the above result
does not hold anymore, see Example \ref{exemplulocal}. As a
biproduct of the method in the proof of Theorem \ref{teoremalocal}
we obtain the following.

\begin{proposition}
Let $A$ be an algebra graded by a finite group $G$, and let $M$
and $N$ be finite dimensional graded left $A$-modules such that
$M$ is graded indecomposable. If $M\simeq N$ as $A$-modules, then
$M\simeq N(\sigma)$ as graded $A$-modules for some $\sigma \in G$.
\end{proposition}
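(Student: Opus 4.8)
The plan is to recycle the construction used in the proof of Theorem~\ref{teoremalocal}, with the role of injectivity replaced by the graded Krull--Schmidt theorem. Let $U:A-gr\rightarrow A-mod$ be the forgetful functor; it has a right adjoint $F$ by \cite[Theorem 2.5.1]{nvo}, and by \cite[Proposition 2.5.4]{nvo} one has $F(U(P))\simeq\bigoplus_{g\in G}P(g)$ in $A-gr$ for every $P\in A-gr$, a finite direct sum since $G$ is finite. From $M\simeq N$ as $A$-modules we get $U(M)\simeq U(N)$, hence $F(U(M))\simeq F(U(N))$, i.e.
\[
\bigoplus_{g\in G}M(g)\simeq\bigoplus_{g\in G}N(g)\qquad\mbox{in } A-gr.
\]

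Next I would invoke the Krull--Schmidt property of the category of finite dimensional graded $A$-modules. The key point is that a finite dimensional graded indecomposable module $P$ has local endomorphism ring: $End_{A-gr}(P)$ is a finite dimensional $k$-algebra with no nontrivial idempotents, hence local, so Azumaya's theorem applies and a finite dimensional graded $A$-module decomposes essentially uniquely into graded indecomposables. Since each suspension functor $T_g$ is an isomorphism of categories, $M$ graded indecomposable forces every $M(g)$ to be graded indecomposable, so the left-hand side above is already such a decomposition. Writing $N=\bigoplus_{j=1}^{r}N_j$ with each $N_j$ graded indecomposable, the right-hand side becomes $\bigoplus_{g\in G}\bigoplus_{j=1}^{r}N_j(g)$, again a decomposition into graded indecomposables. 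By Azumaya's theorem the indecomposable summand $M=M(e)$ of the left-hand side is isomorphic to $N_j(\sigma)$ for some $j$ and some $\sigma\in G$.

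Finally, a dimension count disposes of the auxiliary summand $N_j$: since $M\simeq N$ as $A$-modules, $\dim_k M=\dim_k N=\sum_{j=1}^{r}\dim_k N_j$, whereas $\dim_k M=\dim_k N_j(\sigma)=\dim_k N_j$; as each $N_j$ is nonzero, this forces $r=1$, so $N=N_j$ and $M\simeq N(\sigma)$ in $A-gr$, as claimed.

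The step I expect to demand the most care (though it is entirely standard) is the justification of graded Krull--Schmidt for finite dimensional modules, i.e. the locality of the endomorphism ring of a graded indecomposable; in the proof of Theorem~\ref{teoremalocal} the analogous fact was needed only for \emph{injective} indecomposable objects. The rest of the argument is a direct transcription of the one given there.
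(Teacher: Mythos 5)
Your proof is correct and follows essentially the same route as the paper: apply the right adjoint $F$ of the forgetful functor to the ungraded isomorphism to get $\bigoplus_{g\in G}M(g)\simeq\bigoplus_{g\in G}N(g)$ in $A-gr$, then invoke Krull--Schmidt/Azumaya for finite dimensional graded modules. The only cosmetic difference is that you rule out extra indecomposable summands of $N$ by a dimension count, whereas the paper compares the number of indecomposable summands on the two sides; your added justification that graded indecomposables have local endomorphism rings is a welcome bit of care that the paper leaves implicit.
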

\begin{proof}
If $F$ is the right adjoint of the forgetful functor
$U:A-gr\rightarrow A-mod$, then we have $F(M)\simeq F(N)$, so then
$\bigoplus\limits_{g \in G}M(g)\simeq \bigoplus\limits_{g \in
G}M(g)$ in $A-gr$. Since each $M(g)$ is indecomposable in $A-gr$
and the two sides have the same finite number of summands, the
Krull-Schmidt Theorem shows that $N$ must be graded indecomposable
and $M\simeq N(\sigma)$ for some $\sigma$.
\end{proof}

\section{Graded symmetric algebras} \label{ssimetric}

We consider a special class of graded Frobenius algebras. These
are just the symmetric algebras in the sovereign monoidal category
of $G$-graded vector spaces, see \cite{fuchs}.

\begin{definition}
A finite dimensional graded algebra $A$ is called graded symmetric
if $A$ and $A^*$ are isomorphic as graded left-$A$, right-$A$
bimodules.
\end{definition}

Obviously, if $A$ is graded symmetric, then $A$ is symmetric as an
algebra, and also $A$ is graded Frobenius. If $A$ is graded
symmetric and $H$ is a subgroup of the grading group $G$, then
$A_H$ is a graded symmetric algebra of type $H$. In particular
$A_e$ is a symmetric algebra whenever $A$ is graded symmetric. The
following characterizes the graded symmetric property. The proof
is adapted from the ungraded case (see \cite[Theorem 16.54]{lam}).

\begin{theorem} \label{theoremsymmetric}
Let $A=\bigoplus\limits_{g \in G}A_g$ be a finite dimensional
$G$-graded algebra. The following assertions are equivalent.\\
(i) $A$ is graded symmetric.\\
(ii) There exists a non-degenerate associative symmetric bilinear
form $B:A\times A\rightarrow k$ such that $B(r_\tau,r_\mu)=0$ for
any $r_\tau\in A_\tau$ and $r_\mu\in A_\mu$ with $\tau\mu\neq
e$.\\
(iii) There exists a linear map $\lambda:A\rightarrow k$ such that
$\lambda(xy)=\lambda(yx)$ for any $x,y\in A$, $Ker(\lambda)$ does
not contain non-zero graded left ideals, and $\lambda(x_\sigma
y_\tau)=0$ for any $x_\sigma\in A_\sigma,y_\tau\in A_\tau$ with
$\sigma\tau\neq e$.
\end{theorem}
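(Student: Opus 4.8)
The plan is to prove Theorem \ref{theoremsymmetric} by combining the already-established characterization of the graded Frobenius property (Theorem \ref{caractgrFrobenius}) with the extra symmetry coming from the trace condition $\lambda(xy)=\lambda(yx)$, exactly as in the ungraded proof of \cite[Theorem 16.54]{lam}, while keeping track of the $G$-grading throughout.

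First I would show $(i)\Rightarrow(ii)$. Assume $\psi:A\rightarrow A^*$ is an isomorphism of graded $A$-bimodules; here $A^*$ carries the bimodule structure coming from both module structures of $A$, so $\psi$ is in particular a graded isomorphism of left $A$-modules $A\simeq A^*=A(e)^*$, hence $A$ is graded Frobenius. Set $B(x,y)=\psi(y)(x)$. Left $A$-linearity of $\psi$ gives associativity of $B$ in the form $B(xa,y)=B(x,ay)$; that $\psi$ is also a morphism of right $A$-modules translates into $B(ax,y)=B(x,ya)$ — the two together force $B(x,y)=B(1,xy)=B(1,yx)=B(y,x)$, so $B$ is symmetric; non-degeneracy is clear from bijectivity of $\psi$. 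The vanishing condition $B(r_\tau,r_\mu)=0$ for $\tau\mu\neq e$ is read off from the grading: $\psi(r_\mu)\in (A^*)_\mu=\{f\mid f(A_\lambda)=0\text{ for }\lambda\neq\mu^{-1}\}$, and $\tau\neq\mu^{-1}$, so $\psi(r_\mu)(r_\tau)=0$.

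Next, $(ii)\Rightarrow(iii)$: given such a $B$, put $\lambda(x)=B(1,x)$. Then $\lambda(xy)=B(1,xy)=B(x,y)=B(y,x)=B(1,yx)=\lambda(yx)$ using symmetry and associativity of $B$; the grading condition $\lambda(x_\sigma y_\tau)=B(1,x_\sigma y_\tau)$ vanishes for $\sigma\tau\neq e$ since $B(x_\sigma,y_\tau)=0$ (here $1\in A_e$, and we may instead write $\lambda(x_\sigma y_\tau)=B(1_e\cdot x_\sigma,y_\tau)$ and invoke the vanishing with $\tau$ and $\sigma$, noting $1\cdot x_\sigma\in A_\sigma$); and $\mathrm{Ker}(\lambda)$ contains no nonzero graded left ideal because if $I$ is such an ideal in $\mathrm{Ker}(\lambda)$ and $a\in I$ then $B(x,a)=\lambda(xa)=0$ for all $x$, so $a=0$ by non-degeneracy. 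For $(iii)\Rightarrow(i)$: define $\psi:A\rightarrow A^*$ by $\psi(y)(x)=\lambda(xy)$. The relation $\psi(ay)(x)=\lambda(xay)=\psi(y)(xa)=(a\psi(y))(x)$ shows $\psi$ is left $A$-linear, and $\psi(ya)(x)=\lambda(xya)=\lambda(axy)=\psi(y)(ax)=(\psi(y)a)(x)$ (using the trace property in the middle) shows it is right $A$-linear, so $\psi$ is a bimodule map. Injectivity follows since $\mathrm{Ker}(\psi)$ is a graded left ideal (it is graded because, as in the proof of Theorem \ref{caractgrFrobenius}, the grading condition $\lambda(x_\sigma y_\tau)=0$ for $\sigma\tau\neq e$ forces $\psi(A_\mu)\subseteq (A^*)_\mu$) contained in $\mathrm{Ker}(\lambda)$, hence zero; a dimension count then makes $\psi$ bijective.

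The main obstacle — or rather the only place requiring genuine care rather than routine bookkeeping — is the verification that $\psi$ in $(iii)\Rightarrow(i)$ is a \emph{right} $A$-module homomorphism, since this is precisely where the trace identity $\lambda(xy)=\lambda(yx)$ (as opposed to mere associativity of the induced form) is used, and it is the feature distinguishing the symmetric case from the Frobenius case. Once that identity is in hand the computation is immediate, and everything else (gradedness of the maps, injectivity via no-nonzero-graded-left-ideals, surjectivity by equal dimensions) parallels the Frobenius argument in Theorem \ref{caractgrFrobenius} and the ungraded symmetric argument of \cite{lam}.
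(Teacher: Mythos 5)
Your proposal is correct and follows essentially the same route as the paper: the same bilinear form $B(x,y)=\psi(y)(x)$, the same trace map $\lambda(x)=B(1,x)$, and the same map $\psi(y)(x)=\lambda(xy)$ for the converse, with the graded vanishing conditions handled exactly as in Theorem \ref{caractgrFrobenius}. The only difference is that you spell out the steps the paper dismisses with ``as in the ungraded case,'' and you do so correctly.
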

\begin{proof}
(i)$\Rightarrow$(ii) Let $\phi:A\rightarrow A^*$ be an isomorphism
of graded bimodules. As in the ungraded case, $B:A\times
A\rightarrow k$, $B(x,y)=\phi(y)(x)$ is a non-degenerate
associative symmetric bilinear form. As in the proof of Theorem
\ref{caractgrFrobenius}, (i)$\Rightarrow$ (ii), we see that
$B(r_\tau,r_\mu)=0$ for any $r_\tau\in A_\tau$ and $r_\mu\in
A_\mu$ with $\tau\mu\neq e$.

(ii)$\Rightarrow$(iii) As in the ungraded case, the linear map
$\lambda:A\rightarrow k$, $\lambda(x)=B(x,1)$ satisfies
$\lambda(xy)=\lambda(yx)$ for any $x,y\in A$, and moreover,
$Ker(\lambda)$ does not even contain non-zero left ideals. If
$x_\sigma\in A_\sigma,y_\tau\in A_\tau$ with $\sigma\tau\neq e$,
then $\lambda (x_\sigma y_\tau)=B(x_\sigma y_\tau,1)=B(x_\sigma,
y_\tau)=0$.

(iii)$\Rightarrow$ (i) Let $\phi:A\rightarrow A^*$, $\phi
(y)(x)=\lambda (xy)$. Then $\phi$ is an isomorphism of left $A$,
right $A$-bimodules, as in the ungraded case. If $y_\tau \in
A_\tau$, then $\phi (y_\tau)(x_\sigma)=0$ for any $x_\sigma\in
A_\sigma$ with $\sigma \neq \tau^{-1}$. This shows that
$\phi(y_\tau)\in A^*_\tau$, so $\phi$ is graded.
\end{proof}

\begin{remark}
Let $A$ be a strongly graded algebra. We showed in Corollary
\ref{Frobeniusstronglygraded} that $A$ is graded Frobenius if and
only if $A_e$ is Frobenius. For the symmetric property, this
equivalence does not hold anymore. If $A$ is graded symmetric, we
have seen that $A_e$ must be symmetric. We give an example showing
that the converse is not true. Assume that the base field $k$ has
characteristic $\neq 2$, and let $R$ be a symmetric algebra and an
algebra automorphism $\alpha$ of order 2 of $R$ such that the
invariant subalgebra $R^\alpha$ is not symmetric (see
\cite[Exercise 32, page 457]{lam}). Then the cyclic group $C_2=\{
e,\alpha\}$ acts as automorphisms on $R$, and let $A=R*C_2$ be the
associated skew group algebra. We have that
$u=\frac{1}{2}(e+\alpha)$ is an idempotent of $A$ and $uAu\simeq
R^\alpha$, which is not symmetric. By \cite[Exercise 25, page
456]{lam}, $A$ cannot be symmetric, so then it is not graded
symmetric when regarded as a $C_2$-graded algebra. However
$A_e\simeq R$ is symmetric.

This example also shows that there is not a version of Theorem
\ref{thcaractgrFrob} for the graded symmetric property. It is
clear that if $A$ is a finite dimensional $G$-graded algebra which
is graded symmetric, then $A_e$ is symmetric and $A$ is
$e$-faithful (to the left and to the right). The $e$-faithfulness
follows from the fact that $A$ is graded Frobenius, using Theorem
\ref{thcaractgrFrob}. However, the converse is not true. It is
possible that $A_e$ is symmetric and $A$ is (left and right)
$e$-faithful, but $A$ is not graded symmetric. Indeed, let $A$ be
the $C_2$-graded algebra constructed in the first part of this
example. Then $A_e=R$ is symmetric, and $A$ is left and right
$e$-faithful, since it is strongly graded. However, $A$ is not
graded symmetric (in fact not even symmetric).
\end{remark}

Now we discuss tensor products of graded Frobenius (symmetric)
algebras. Assume that $A=\bigoplus\limits_{g \in G}A_g$ and
$B=\bigoplus\limits_{g \in G}B_g$ are finite dimensional
$G$-graded algebras such that $\sigma \tau=\tau\sigma$ for any
$\sigma\in supp(A)$ and $\tau\in supp (B)$. Then the tensor
product of algebras $A\otimes B$ is $G$-graded, with the grading
given by $(A\otimes
B)_\sigma=\bigoplus\limits_{gh=\sigma}A_g\otimes B_h$. Under these
conditions we have the following.

\begin{proposition}
If $A$ and $B$ are graded Frobenius (respectively graded
symmetric), then so is $A\otimes B$.
\end{proposition}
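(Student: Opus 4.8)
The plan is to reduce everything to the bilinear form characterizations already established: Theorem~\ref{caractgrFrobenius} (with $\sigma=e$) for the graded Frobenius property, and Theorem~\ref{theoremsymmetric} for the graded symmetric property. Since $A$ is graded Frobenius, I would fix a non-degenerate associative bilinear form $\beta_A:A\times A\to k$ with $\beta_A(A_g,A_{g'})=0$ whenever $gg'\neq e$, and similarly a form $\beta_B:B\times B\to k$ for $B$; in the graded symmetric case Theorem~\ref{theoremsymmetric} allows these to be chosen symmetric as well. On $A\otimes B$ I would then consider the form $\beta$ given on elementary tensors by $\beta(a\otimes b,\,a'\otimes b')=\beta_A(a,a')\,\beta_B(b,b')$ and extended bilinearly; equivalently $\beta$ is the composite of $\beta_A\otimes\beta_B:A\otimes A\otimes B\otimes B\to k$ with the canonical isomorphism $(A\otimes B)\otimes(A\otimes B)\cong A\otimes A\otimes B\otimes B$ interchanging the two inner tensor factors, so well-definedness is automatic.

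It then remains to verify the conditions of Theorem~\ref{caractgrFrobenius}(ii), respectively Theorem~\ref{theoremsymmetric}(ii), for $\beta$. Non-degeneracy: the Gram matrix of $\beta$ in a basis of $A\otimes B$ built from bases of $A$ and $B$ is the Kronecker product of the Gram matrices of $\beta_A$ and $\beta_B$, hence invertible. Associativity: since the multiplication of $A\otimes B$ is componentwise, $\beta\big((a\otimes b)(a'\otimes b'),\,a''\otimes b''\big)=\beta_A(aa',a'')\,\beta_B(bb',b'')=\beta_A(a,a'a'')\,\beta_B(b,b'b'')=\beta\big(a\otimes b,\,(a'\otimes b')(a''\otimes b'')\big)$ by associativity of $\beta_A$ and $\beta_B$, and the general case follows by bilinearity. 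Symmetry, when $\beta_A$ and $\beta_B$ are symmetric, is immediate from the defining formula.

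The one step that genuinely uses the hypothesis $\sigma\tau=\tau\sigma$ for $\sigma\in supp(A)$ and $\tau\in supp(B)$ is the grading condition $\beta(x,y)=0$ for $x\in(A\otimes B)_\tau$ and $y\in(A\otimes B)_\mu$ with $\tau\mu\neq e$. Since $(A\otimes B)_\tau$ is spanned by tensors $a\otimes b$ with $a\in A_g$, $b\in B_h$ and $gh=\tau$, by bilinearity it suffices to treat $x=a_g\otimes b_h$ and $y=a'_{g'}\otimes b'_{h'}$ with $0\neq a_g\in A_g$, $0\neq b_h\in B_h$, $0\neq a'_{g'}\in A_{g'}$, $0\neq b'_{h'}\in B_{h'}$, $gh=\tau$ and $g'h'=\mu$. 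Then $\beta(x,y)=\beta_A(a_g,a'_{g'})\,\beta_B(b_h,b'_{h'})$ vanishes unless $g'=g^{-1}$ and $h'=h^{-1}$; and in that case $g\in supp(A)$ commutes with $h\in supp(B)$, hence $g^{-1}$ commutes with $h^{-1}$, so that $\mu=g^{-1}h^{-1}=h^{-1}g^{-1}=(gh)^{-1}=\tau^{-1}$, contradicting $\tau\mu\neq e$; thus $\beta(x,y)=0$. This bookkeeping --- reconciling the degree $gh$ attached to a tensor $a_g\otimes b_h$ with the possible non-commutativity of $G$ --- is the only real obstacle. Once it is dealt with, Theorem~\ref{caractgrFrobenius} shows $A\otimes B$ is graded Frobenius, and, with the symmetric $\beta$, Theorem~\ref{theoremsymmetric} shows it is graded symmetric.
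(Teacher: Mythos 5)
Your proof is correct and is essentially the paper's argument in a different guise: the paper tensors the isomorphisms $\phi\colon A\to A^*$ and $\psi\colon B\to B^*$ into $F=\gamma(\phi\otimes\psi)\colon A\otimes B\to (A\otimes B)^*$ and leaves the verifications as ``easy to check,'' whereas you tensor the corresponding bilinear forms and verify the conditions of Theorem~\ref{caractgrFrobenius} and Theorem~\ref{theoremsymmetric} explicitly --- the same construction read through the equivalence those theorems establish. Your degree bookkeeping (using that $supp(A)$ and $supp(B)$ commute elementwise to force $\mu=g^{-1}h^{-1}=(gh)^{-1}=\tau^{-1}$ whenever the form is nonzero) is precisely the content of the paper's unproved claim that $F$ is graded, and it is carried out correctly.
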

\begin{proof}
Let $\phi:A\rightarrow A^*$ be an isomorphism of graded left
$A$-modules (respectively bimodules), and let $\psi:B\rightarrow
B^*$ be an isomorphism of graded left $B$-modules (respectively
bimodules). Let $F=\gamma (\phi\otimes \psi)$, where
$\gamma:A^*\otimes B^*\rightarrow (A\otimes B)^*$ is the natural
linear isomorphism. Then it is easy to check that $F$ is an
isomorphism of left $A\otimes B$-modules (respectively bimodules),
and that $F$ is moreover graded.
\end{proof}

It is known that a division algebra is symmetric, see
\cite[Example 16.59]{lam1}.  At this point we do not know whether
any graded division algebra is graded symmetric. The following
gives a partial answer.

\begin{theorem} \label{grdivringsim}
Let $\Delta=\bigoplus\limits_{g \in G}\Delta_g$ be a graded
division $k$-algebra such that $Z(\Delta_e)=Z(\Delta)\cap
\Delta_e$. Then $\Delta$ is graded symmetric.
\end{theorem}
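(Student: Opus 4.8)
The plan is to verify condition (iii) of Theorem~\ref{theoremsymmetric}: I would produce a linear functional $\lambda\colon\Delta\to k$ with $\lambda(xy)=\lambda(yx)$ for all $x,y$, whose kernel contains no nonzero graded left ideal, and with $\lambda(x_\sigma y_\tau)=0$ whenever $\sigma\tau\neq e$. Since $\Delta$ is finite dimensional, $\Delta_e$ is a finite dimensional division $k$-algebra, hence a symmetric algebra by \cite[Example 16.59]{lam1}; so I fix a nonzero linear form $\lambda_e\colon\Delta_e\to k$ with $\lambda_e(ab)=\lambda_e(ba)$ for all $a,b\in\Delta_e$ (its kernel contains no nonzero left ideal of $\Delta_e$ just because $\Delta_e$ is a division algebra and $\lambda_e\neq 0$). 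Then I define $\lambda\colon\Delta\to k$ by $\lambda(x)=\lambda_e(x_e)$, where $x_e$ is the degree-$e$ homogeneous component of $x$; so $\lambda$ restricts to $\lambda_e$ on $\Delta_e$ and vanishes on $\Delta_g$ for every $g\neq e$.

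Two of the three conditions are then immediate. Because $\Delta$ is a graded division algebra, every nonzero homogeneous element is invertible, so the only nonzero graded left ideal of $\Delta$ is $\Delta$ itself; as $\lambda\neq 0$, its kernel contains no nonzero graded left ideal. And if $x_\sigma\in\Delta_\sigma$ and $y_\tau\in\Delta_\tau$ with $\sigma\tau\neq e$, then $x_\sigma y_\tau\in\Delta_{\sigma\tau}$ has trivial degree-$e$ component, so $\lambda(x_\sigma y_\tau)=0$.

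The real work is the identity $\lambda(xy)=\lambda(yx)$, which by bilinearity I only need to check for homogeneous $x\in\Delta_\sigma$, $y\in\Delta_\tau$. If $\sigma\tau\neq e$ then also $\tau\sigma\neq e$ (since $\sigma\tau=e\iff\tau=\sigma^{-1}\iff\tau\sigma=e$), so both $\lambda(xy)$ and $\lambda(yx)$ vanish. So assume $\tau=\sigma^{-1}$, and (discarding the trivial case) $x\neq 0$, hence $x$ is invertible with $x^{-1}\in\Delta_{\sigma^{-1}}$; put $a=xy\in\Delta_e$, so $yx=x^{-1}ax$. Conjugation $\varphi\colon b\mapsto x^{-1}bx$ carries $\Delta_e$ into $\Delta_{\sigma^{-1}e\sigma}=\Delta_e$ and is inverted by conjugation by $x^{-1}$, so it is a $k$-algebra automorphism of $\Delta_e$. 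Here the hypothesis enters: for $z\in Z(\Delta_e)=Z(\Delta)\cap\Delta_e$, the element $z$ is central in all of $\Delta$, so $\varphi(z)=x^{-1}zx=z$, i.e.\ $\varphi$ fixes $Z(\Delta_e)$ pointwise. Since the finite dimensional division algebra $\Delta_e$ is central simple over the field $Z(\Delta_e)$, the $Z(\Delta_e)$-algebra automorphism $\varphi$ is inner by Skolem--Noether, say $\varphi(b)=cbc^{-1}$ with $c\in\Delta_e$ a unit. Then
\[
\lambda(yx)=\lambda_e(x^{-1}ax)=\lambda_e(cac^{-1})=\lambda_e\bigl((ca)c^{-1}\bigr)=\lambda_e\bigl(c^{-1}(ca)\bigr)=\lambda_e(a)=\lambda(xy),
\]
using $\lambda_e(uv)=\lambda_e(vu)$. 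This gives condition (iii), and Theorem~\ref{theoremsymmetric} then yields that $\Delta$ is graded symmetric.

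I expect the main obstacle to be precisely this mixed-degree case $\tau=\sigma^{-1}$. The point is that the hypothesis $Z(\Delta_e)=Z(\Delta)\cap\Delta_e$ is exactly what forces conjugation by a homogeneous unit to fix $Z(\Delta_e)$ pointwise, so that Skolem--Noether can be invoked to make that conjugation inner and hence $\lambda_e$-invariant; without the hypothesis, $\varphi$ restricted to $Z(\Delta_e)$ could be a nontrivial field automorphism and $\lambda_e$ need not be preserved. Everything else --- the two easy conditions, and the reduction to the division algebra $\Delta_e$ --- is routine.
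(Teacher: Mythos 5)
Your proof is correct, and it reaches the same endpoint---condition (iii) of Theorem~\ref{theoremsymmetric}---by a genuinely different route. The paper does not construct $\lambda$ explicitly; instead it shows that the $Z(\Delta_e)$-span of all commutators $xy-yx$ with $x\in\Delta_g$, $y\in\Delta_{g^{-1}}$ is a \emph{proper} subspace of $\Delta_e$, by rewriting each such commutator (for $g\neq e$) as $\phi(u)-u$ for an inner $F$-automorphism $\phi$ of $\Delta_e$ (where $F=Z(\Delta_e)$), base-changing to the algebraic closure so that $\overline{F}\otimes_F\Delta_e\simeq M_r(\overline{F})$, and observing that all the relevant elements have matrix trace zero, hence cannot span; any hyperplane of $\Delta$ containing that commutator span together with all $\Delta_g$, $g\neq e$, then yields the required functional. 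You instead start from a nonzero symmetric functional $\lambda_e$ on the division algebra $\Delta_e$ (citing the fact, also used in the paper, that division algebras are symmetric), extend it by zero to the other components, and verify the trace identity $\lambda(xy)=\lambda(yx)$ directly, using Skolem--Noether over $Z(\Delta_e)$ to make conjugation by a homogeneous unit inner in $\Delta_e$ and hence $\lambda_e$-invariant. Both arguments use the hypothesis $Z(\Delta_e)=Z(\Delta)\cap\Delta_e$ in exactly the same way---to guarantee that conjugation by a homogeneous unit fixes $Z(\Delta_e)$ pointwise---and both ultimately rest on the principle that inner automorphisms preserve a trace form; but your version avoids the passage to $\overline{F}$ and makes the functional explicit, which is arguably cleaner. (Like the paper's proof, yours implicitly assumes $\Delta$ is finite dimensional, which is forced by the paper's definition of graded symmetric, so this is not a gap.)
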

\begin{proof}
We claim that the $k$-subspace of $\Delta_e$ spanned by all the
commutators of the form $xy-yx$ with $x\in \Delta_g$, $y\in
\Delta_{g^{-1}}$, $g\in G$, is not the whole of $\Delta_e$. Then
we can choose a hyperplane $H$ of $\Delta$ containing all these
commutators and all homogeneous components $\Delta_g$ with $g\neq
e$. Noting that the only graded left ideals of $\Delta$ are 0 and
$\Delta$, we have that a linear map $\lambda:\Delta\rightarrow k$
with $Ker(\lambda)=H$ satisfies the conditions in Theorem
\ref{theoremsymmetric}(3), thus making $\Delta$ a graded symmetric
algebra. Let $F=Z(\Delta_e)\subseteq Z(\Delta)$. Obviously
$k\subseteq F$. The claim above obviously follows if we show that
the $F$-subspace $S$ of $\Delta_e$ spanned by all commutators as
above is not the whole of $\Delta_e$.

Assume otherwise that $S=\Delta_e$. If $x\in \Delta_g$ and $y\in
\Delta_{g^{-1}}$ with $g\neq e$, and $x,y\neq 0$, then
$xy-yx=xyxx^{-1}-yx=xux^{-1}-u$, where $u=yx\in \Delta_e$. Thus
each such commutator is of the form $\phi(u)-u$, where $\phi$ is
an $F$-automorphism of $\Delta_e$ (note that the inner
automorphism of $\Delta_e$ associated to some $x\in
\Delta_g\setminus\{0\}$ is an $F$-automorphism since $F\subseteq
Z(\Delta)$). Therefore $\Delta_e$ is the $F$-span of some
commutators $xy-yx$ with $x,y\in \Delta_e$, and some elements of
the form $\phi(u)-u$, where $\phi$ is an $F$-automorphism of
$\Delta_e$, and $u\in \Delta_e$. Let $\overline{F}$ be the
algebraic closure of $F$. Lifting to $\overline{F}$, we obtain
that $\overline{F}\otimes _F\Delta_e$ is $\overline{F}$-spanned by
some elements of the form $ab-ba$, and some elements of the form
$\psi(a)-a$, where $a,b\in \overline{F}\otimes _F\Delta_e$ and
$\psi$ is an automorphism of the $\overline{F}$-algebra
$\overline{F}\otimes _F\Delta_e$. As a consequence of
\cite[Theorem 15.1]{lam1} we have that $\overline{F}\otimes
_F\Delta_e$ is isomorphic to a matrix algebra $M_r(\overline{F})$
as an $\overline{F}$-algebra. But if $a,b\in M_r(\overline{F})$,
then $ab-ba$ has trace 0, and since an automorphism of
$M_r(\overline{F})$ is inner, an element of the form $\psi(a)-a$,
where $a\in M_r(\overline{F})$ and $\psi$ is an automorphism of
the $\overline{F}$-algebra $M_r(\overline{F})$, has also trace 0.
This is a contradiction, since a family of elements of trace 0 of
$M_r(\overline{F})$ cannot span $M_r(\overline{F})$, and this ends
the proof.
\end{proof}

 The following shows that any graded division
algebra has a distinguished graded sub-division algebra which is
graded symmetric.

\begin{corollary}
Let $A$ be a $G$-graded division algebra. Then $\Delta=C_A(A_e)$,
the centralizer of $A_e$ in $A$, is a graded division algebra
which is graded symmetric.
\end{corollary}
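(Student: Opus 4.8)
The plan is to show that $\Delta = C_A(A_e)$ is a $G$-graded division algebra whose homogeneous component of degree $e$ is the center $Z(A_e)$, and that $\Delta$ satisfies the hypothesis $Z(\Delta_e) = Z(\Delta)\cap\Delta_e$ of Theorem \ref{grdivringsim}, so that the theorem applies directly. First I would check that $\Delta$ is a graded subalgebra of $A$: if $x = \sum_g x_g \in \Delta$, then for every homogeneous $a \in A_e$ we have $xa = ax$, and comparing homogeneous components of degree $g$ gives $x_g a = a x_g$ for all $g$; hence each $x_g \in C_A(A_e)$, so $\Delta = \bigoplus_g \Delta_g$ with $\Delta_g = C_A(A_e)\cap A_g$. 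In particular $\Delta_e = Z(A_e)$.

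Next I would verify that $\Delta$ is a graded division algebra, i.e. that every nonzero homogeneous element of $\Delta$ is invertible in $\Delta$. If $x \in \Delta_g$ is nonzero, then since $A$ is a graded division algebra, $x$ is invertible in $A$ with $x^{-1} \in A_{g^{-1}}$. It remains to see that $x^{-1} \in \Delta$, i.e. that $x^{-1}$ commutes with $A_e$: for $a \in A_e$, from $xa = ax$ we get $a x^{-1} = x^{-1}(xa)x^{-1} = x^{-1}(ax)x^{-1} = x^{-1}a$, so indeed $x^{-1} \in \Delta_{g^{-1}}$. Thus $\Delta$ is a $G$-graded division algebra.

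Now I would verify the hypothesis of Theorem \ref{grdivringsim} for $\Delta$, namely $Z(\Delta_e) = Z(\Delta)\cap\Delta_e$. Since $\Delta_e = Z(A_e)$ is commutative, $Z(\Delta_e) = \Delta_e$, so the required identity is $\Delta_e \subseteq Z(\Delta)$, i.e. every element of $Z(A_e)$ is central in $\Delta$. This is where the centralizer condition does the work: take $z \in \Delta_e = Z(A_e)$ and a homogeneous $x \in \Delta_g$; I must show $zx = xz$. Using that $x$ is invertible with $x^{-1} \in \Delta_{g^{-1}}$, conjugation by $x$ is an automorphism of $A_e$ (it preserves $A_e$ since $x A_e x^{-1} \subseteq A_g A_e A_{g^{-1}} \subseteq A_e$), and in fact restricts to an automorphism of $Z(A_e)$; but I need to conclude it fixes $z$ pointwise. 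The cleanest route is: $x z x^{-1} \in Z(A_e) = \Delta_e$, and since $x \in \Delta = C_A(A_e)$ commutes with every element of $A_e \supseteq Z(A_e)$... wait — that is not automatic, as $z$ is an element of $A_e$ but $x$ is required to commute with $A_e$, so actually $xz = zx$ holds \emph{directly} because $z \in A_e$ and $x \in C_A(A_e)$. So in fact $\Delta_e = Z(A_e) \subseteq C_A(A_e) \cap Z(\Delta)$ is immediate once one observes that elements of $\Delta$ commute with all of $A_e$ by definition.

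The main obstacle is essentially bookkeeping: being careful that $\Delta$ is \emph{graded} (the homogeneous-component argument) and that inverses of homogeneous units stay in $\Delta$, after which the hypothesis of Theorem \ref{grdivringsim} for $\Delta$ is nearly tautological because $\Delta_e = Z(A_e)$ is commutative and is centralized by all of $\Delta$ by the very definition $\Delta = C_A(A_e)$. With all hypotheses of Theorem \ref{grdivringsim} verified, that theorem yields that $\Delta$ is graded symmetric, completing the proof.
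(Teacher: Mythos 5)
Your proof is correct and follows essentially the same route as the paper: both reduce to verifying the hypothesis $Z(\Delta_e)=Z(\Delta)\cap\Delta_e$ of Theorem \ref{grdivringsim}, which holds because $\Delta_e=Z(A_e)$ is commutative and is centralized by all of $\Delta=C_A(A_e)$ by the very definition of the centralizer. The only difference is that you spell out the routine verifications (that $\Delta$ is a graded subalgebra and that inverses of nonzero homogeneous elements stay in $\Delta$) which the paper dismisses as clear.
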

\begin{proof}
It is clear that $\Delta$ is a graded division algebra. If we take
into account Theorem \ref{grdivringsim}, it is enough to show that
$Z(\Delta_e)=Z(\Delta)\cap \Delta_e$. To see this, if we take
$a\in Z(\Delta_e)=\Delta_e$, then $ay=ya$ for any $y\in \Delta$,
so $a\in Z(\Delta)$. Thus $Z(\Delta_e)\subseteq Z(\Delta)\cap
\Delta_e$. The converse inclusion always holds.
\end{proof}

\section{Examples} \label{sectionexamples}

In this section we give examples to illustrate the concepts of
$\sigma$-graded Frobenius algebra and graded symmetric algebra,
and their connection to the Frobenius and the symmetric
properties.

\begin{example} \label{extrivext}
Let $R$ be a finitely generated $k$-algebra, and $R^*$ the dual
space of $R$, with the usual bimodule structure. The trivial
extension $A=R\oplus R^*$ is the algebra with multiplication
defined by $(r,f)(r',f')=(rr', rf'+fr')$. It is known (see
\cite[Example 16.60]{lam}) that $A$ is a symmetric algebra. On the
other hand, $A$ is a ${\bf Z}_2$-graded algebra, with homogeneous
components
$A_{\hat{0}}=R\oplus 0\simeq R$, $A_{\hat{1}}=0\oplus R^*$.\\
We have that $A$ is not $\hat{0}$-faithful, since
$A_{\hat{1}}(0,f)=0$ for any $f\in R^*$. On the other hand, $A$ is
 $\hat{1}$-faithful. Indeed, if $A_{\hat{0}}(0,f)=0$, then $Rf=0$,
 so $f=0$; also, if $A_{\hat{1}}(r,0)=0$, then $R^*r=0$, so
 $f(r)=(fr)(1)=0$ for any $f\in R^*$, showing that $r=0$. \\
 We conclude that $A$ is Frobenius (since it is symmetric), but it
 is not graded Frobenius (since it is not $\hat{0}$-faithful).
 However, $A$ is $\hat{1}$-graded Frobenius. This also gives an
 example of a graded algebra which is symmetric, but not graded
 symmetric.
\end{example}

\begin{example}
Let $u$ and $v$ be non-zero elements of $k$, and let $A$ be the
set of all matrices of the form ${\tiny \left(
\begin{array}{cccc}
a&b&c&d\\
0&a&0&uc\\
0&0&a&vb\\
0&0&0&a
\end{array}
\right)}$, with $a,b,c,d\in k$. Then $A$ is a Frobenius algebra,
and $A$ is symmetric if and only if $u=v$ (see \cite[Example 16.66
and Exercise 16.29]{lam}). This example is due to Nakayama and
Nesbitt. Let

 $$X={\tiny \left(
\begin{array}{cccc}
0&1&0&0\\
0&0&0&0\\
0&0&0&v\\
0&0&0&0
\end{array}
\right)}, Y={\tiny \left(
\begin{array}{cccc}
0&0&1&0\\
0&0&0&u\\
0&0&0&0\\
0&0&0&0
\end{array}
\right)}, Z={\tiny \left(
\begin{array}{cccc}
0&0&0&1\\
0&0&0&0\\
0&0&0&0\\
0&0&0&0
\end{array}
\right)}$$

We have that $X^2=Y^2=Z^2=XZ=ZX=YZ=ZY=0$, $XY=uZ$ and $YX=vZ$,
therefore $A$ is a ${\bf Z}_4$-graded algebra with grading given
by $A_{\hat{0}}=kI_4$, $A_{\hat{1}}=kX$, $A_{\hat{2}}=kY$,
$A_{\hat{3}}=kZ$. Since $A_{\hat{3}}X=0$, $A_{\hat{3}}Y=0$ and
$A_{\hat{1}}X=0$, we see that $A$ is not $g$-faithful if $g\in \{
\hat{0}, \hat{1}, \hat{2}\}$. On the other hand, $A$ is
$\hat{3}$-faithful. Then $A$ is $\hat{3}$-graded Frobenius, but it
is not $g$-graded Frobenius if $g\in \{ \hat{0}, \hat{1},
\hat{2}\}$. If $u=v$, then $A$ is symmetric, but not graded
symmetric.
\end{example}

The following shows that the condition that $A_e$ is local cannot
be dropped in Theorem \ref{teoremalocal}.

\begin{example} \label{exemplulocal}
Let $R$ be a finite dimensional algebra which is decomposable as
an algebra, i.e. $R=R_1\times R_2$ for some algebras $R_1$ and
$R_2$. Then $R^*=R_1^*\oplus R_2^*$, and we have that
$R_2R_1^*=R_1^*R_2=0$ and $R_1R_2^*=R_2^*R_1=0$ in the
$R,R$-bimodule structure of $R^*$. Let $A=R\oplus R^*$ be the
trivial extension, which is an algebra as in Example
\ref{extrivext}. We consider another grading on $A$. More
precisely, $A$ is  ${\bf Z}_3$-graded with $A_{\hat{0}}=R$,
$A_{\hat{1}}=R_1^*$ and $A_{\hat{2}}=R_2^*$. Then $A$ is not
$\hat{0}$-faithful since $A_2f=0$ for any $f\in
R_1^*=A_{\hat{1}}$. Also, $A$ is neither $\hat{1}$-faithful since
$A_{\hat{2}}f=0$ for any $f\in R_2^*=A_{\hat{2}}$, nor
$\hat{2}$-faithful since $A_{\hat{1}}f=0$ for any $f\in
R_1^*=A_{\hat{1}}$. Therefore $A$ is not $\sigma$-faithful, and by
Theorem \ref{caractgrFrobenius} $A$ cannot be $\sigma$-graded
Frobenius for any $\sigma$. On the other hand $A$ is a symmetric
algebra, so it is also Frobenius.

Let us note that in fact $A$ may be regarded as a $G$-graded
algebra for any group $G$ of order at least 3. Indeed, if $e$ is
the neutral element of $G$, and $u,v$ are two other different
elements of $G$, then $A_e=A_{\hat{0}}$, $A_u=A_{\hat{1}}$ and
$A_v=A_{\hat{2}}$ defines such a grading.
\end{example}

\begin{example}
(1) Let $A=M_n(k)$ be a matrix algebra. There is a special class
of gradings on $A$, called good gradings, for which any matrix
unit $e_{ij}$ is a homogeneous element. Under certain conditions
on $n,k$ and $G$, any $G$-grading on $A$ is isomorphic to a good
grading. We have that any good grading on $A$ makes $A$ a graded
symmetric algebra. Indeed, the trace map $tr:A\rightarrow k$
satisfies $tr(xy)=tr(yx)$ for any $x,y\in A$, and it is easy to
see that $tr(yx)=0$ for any $y\in A$ implies that $x=0$ (just take
$y=e_{ij}$ for all $i,j$), so the kernel of $tr$ does not contain
non-zero left ideals. Since all $e_{ii}$'s are homogeneous
elements of degree $e$ (the neutral element of $G$), we see that
$tr(xy)=0$ if $x\in A_\sigma, y\in A_\tau$ with $\sigma\tau\neq
e$.

(2) There is another special type of gradings on $A=M_n(k)$,
called fine gradings. These are gradings for which any homogeneous
component of $A$ has dimension at most 1. Such a grading makes $A$
a graded division algebra. By Theorem \ref{grdivringsim} we see
that $A$ is graded symmetric.

(3) If $G$ is abelian and $k$ is algebraically closed, using  (1)
and (2) above and the result of \cite{bsz}, which describes any
$G$-grading on  $A=M_n(k)$ as a tensor product of a matrix algebra
with a good grading and a matrix algebra with a fine grading, we
obtain that any $G$-grading on $A$ makes it a graded symmetric
algebra.
\end{example}

\section{Frobenius functors and applications to (graded) Frobenius
algebras} \label{sfunctors}

A functor $F:{\cal A}\rightarrow {\cal B}$ is called Frobenius if
it is a left and a right adjoint of the same functor $G:{\cal
B}\rightarrow {\cal A}$. Frobenius functors originate in the work
of Morita. A functor defining an equivalence of categories is
Frobenius, and the composition of two Frobenius functors is a
Frobenius functor. It is known that a finite dimensional algebra
$A$ is Frobenius if and only if the restriction of scalars functor
$A-mod\rightarrow k-mod$ is a Frobenius functor, see for example
\cite[Theorem 28]{cmz}.

It is well known that if $A$ is a Frobenius algebra, then the
matrix algebra $M_n(A)$ is Frobenius, too. This follows from the
fact that $M_n(A)\simeq A\otimes M_n(k)$, a tensor product of
Frobenius algebras. We note that this also follows as an
application of Frobenius functors. Indeed, consider the following
commutative (up to a natural isomorphism) diagram of categories
and functors.

$$\xymatrix{
A-mod \ar[r]^>>>>{F}\ar[d]_{U_1} &  M_n(A)-mod \ar[d]^{U_2} \\
k-mod \ar[r]^{(-)^n} & k-mod }$$ where $U_1$ and $U_2$ are the
restriction of scalars functors, $F=Col_n(-)$ is the standard
equivalence of categories, and $(-)^n$ is the functor taking a
vector space $V$ to the direct sum of $n$ copies of $V$, and
acting accordingly on morphisms. It is clear that $(-)^n$ is a
Frobenius functor, since it is natural isomorphic to a functor of
the form $V\otimes (-)$, where $V$ is a vector space of dimension
$n$, and this has the left and right adjoint $V^*\otimes (-)$. Now
if $A$ is Frobenius, then $U_1$ is a Frobenius functor, so then is
$(-)^n U_1$. Then $U_2F$ is a Frobenius functor, and since $F$ is
an equivalence, we obtain that $U_2$ is also Frobenius. Thus
$M_n(A)$ is a Frobenius algebra.

We show that the converse is also true, i.e. if $M_n(A)$ is a
Frobenius algebra, then so is $A$. We first need the following.

\begin{proposition} \label{cardual}
Let $A$ be an algebra and let $M$ be a left $A$-module. If $n$ is
a positive integer, then there is an isomorphism
$Hom_{M_n(A)}(Col_n(M),M_n(A))\simeq Row_n(Hom_A(M,A))$ of right
$M_n(A)$-modules.
\end{proposition}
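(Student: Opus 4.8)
The plan is to exhibit an explicit map between the two modules and check that it is a well-defined bijection compatible with the right $M_n(A)$-action. Write $M_n(A)=M_n(k)\otimes A$ acting on $Col_n(M)$, the column of $n$ copies of $M$, in the usual way: $E_{ij}\otimes a$ sends the column whose $j$-th entry is $m$ (and others zero) to the column whose $i$-th entry is $am$. A right $M_n(A)$-module homomorphism $\varphi:Col_n(M)\to M_n(A)$ is determined by its values on the $n$ columns $\iota_k(m)$ (the column with $m$ in position $k$), and for each fixed $k$ the assignment $m\mapsto \varphi(\iota_k(m))$ is a map $M\to M_n(A)$. The idempotents $E_{kk}\otimes 1$ cut out, on the target side, the rows: I would show that $\varphi(\iota_k(m))=(E_{kk}\otimes1)\varphi(\iota_k(m))$ lies in the $k$-th row of $M_n(A)$. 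This produces, for each $k$ and each column position $j$, an element $f_{kj}(m)\in A$, i.e. a family of maps $M\to A$. Associativity of the $M_n(A)$-action forces these to be $A$-linear in $m$ and forces the dependence on $k$ to be rigid, so the whole of $\varphi$ is recorded by a single row $(f_1,\dots,f_n)$ with each $f_j\in Hom_A(M,A)$; this is the desired element of $Row_n(Hom_A(M,A))$.

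Concretely, I would \emph{define} the map $\Phi:Hom_{M_n(A)}(Col_n(M),M_n(A))\to Row_n(Hom_A(M,A))$ by $\Phi(\varphi)=\big(f_1,\dots,f_n\big)$, where $f_j(m)$ is the $(1,j)$-entry of $\varphi(\iota_1(m))$, and define the inverse $\Psi$ sending a row $(f_1,\dots,f_n)$ to the homomorphism $\psi$ with $\psi(\iota_k(m))$ equal to the matrix whose $(k,j)$-entry is $f_j(m)$ and whose other entries vanish. Then I would verify in turn: (a) $\psi$ is $M_n(A)$-linear — this is a direct computation using $(E_{ij}\otimes a)(E_{k\ell}\otimes b)=\delta_{jk}E_{i\ell}\otimes ab$ and the $A$-linearity of the $f_j$; (b) $\Phi\circ\Psi=\mathrm{id}$, immediate from the definitions; (c) $\Psi\circ\Phi=\mathrm{id}$, which is where one actually uses that an $M_n(A)$-homomorphism out of $Col_n(M)$ is determined by the above data: apply $E_{k1}\otimes1$ to $\iota_1(m)$ to get $\iota_k(m)$, so $\varphi(\iota_k(m))=(E_{k1}\otimes1)\varphi(\iota_1(m))$, and read off that $\varphi(\iota_k(m))$ has its $k$-th row equal to the first row of $\varphi(\iota_1(m))$ and all other rows zero (the latter because $(E_{\ell\ell}\otimes1)(E_{k1}\otimes1)=0$ for $\ell\neq k$). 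Finally I would check that both $\Phi$ and $\Psi$ intertwine the right $M_n(A)$-actions: on $Hom_{M_n(A)}(Col_n(M),M_n(A))$ the action is post-composition with right multiplication in $M_n(A)$, and on $Row_n(Hom_A(M,A))$ it is the natural right action of $M_n(A)$ on a row of length $n$ with entries in the $(A,A)$-bimodule $Hom_A(M,A)$; matching them up is again the identity $E_{ij}\otimes a\cdot E_{k\ell}\otimes b=\delta_{jk}E_{i\ell}\otimes ab$, bookkeeping which row-index lands where.

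The computations are all of this routine matrix-unit flavour, so there is no serious obstacle; the one point that requires care — and the step I would flag as the crux — is (c), the fact that $\Phi$ is injective, i.e. that a homomorphism $\varphi$ is genuinely recovered from just the first row of $\varphi(\iota_1(-))$. This rests entirely on the observation that the columns $\iota_k(m)$ together generate $Col_n(M)$ and are obtained from $\iota_1(m)$ by the action of the matrix units $E_{k1}\otimes 1$, together with the orthogonality relations among the $E_{\ell\ell}\otimes1$; once this is spelled out, everything else falls into place and the isomorphism is clearly one of right $M_n(A)$-modules.
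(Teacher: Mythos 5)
Your proposal is correct and follows essentially the same route as the paper: read off the first row of $\varphi(\iota_1(m))$ to get $(f_1,\dots,f_n)\in Row_n(Hom_A(M,A))$, use the matrix units $E_{k1}$ and the orthogonal idempotents $E_{\ell\ell}$ to see that this data determines $\varphi$, and write down the explicit inverse. (One small slip: $\varphi$ is a homomorphism of \emph{left} $M_n(A)$-modules — the right module structure lives on the Hom-space, not on $\varphi$ itself — but your computations all use left-linearity, so nothing is affected.)
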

\begin{proof}
Let $f:Col_n(M)\rightarrow M_n(A)$ be a morphism of left
$M_n(A)$-modules. Let $m\in M$ and $X(m)$ the column having $m$ on
the first spot, and 0 elsewhere. Since $e_{11}X(m)=X(m)$, we have
that $f(X(m))=f(e_{11}X(m))=e_{11}f(X(m))$, showing that $f(X(m))$
has zeroes on any row except the first one. Thus there are
$f_1(m),\ldots,f_n(m)\in A$ such that
$$f(X(m))=\left(
\begin{array}{cccc}
f_1(m)&f_2(m)&\ldots&f_n(m)\\
0&0&\ldots&0\\
\ldots&\ldots&\ldots&\ldots\\
0&0&\ldots&0
\end{array}
\right)$$ Since $f$ is a morphism of left $M_n(A)$-modules, it is
easy to see that $f_1,\ldots,f_n\in Hom_A(M,A)$. Now $\tiny{
\left(
\begin{array}{c}
m_1\\
m_2\\
\ldots\\
m_n
\end{array}
\right)}=e_{11}X(m_1)+e_{12}X(m_2)+\ldots +e_{1n}X(m_n)$, so then
 \bea f\left(\left(
\begin{array}{c}
m_1\\
m_2\\
\ldots\\
m_n
\end{array}
\right) \right)&=&e_{11}f(X(m_1))+e_{12}f(X(m_2))+\ldots
+e_{1n}f(X(m_n))\\&=&\left(
\begin{array}{cccc}
f_1(m_1)&f_2(m_1)&\ldots&f_n(m_1)\\
f_1(m_2)&f_2(m_2)&\ldots&f_n(m_2)\\
\ldots&\ldots&\ldots&\ldots\\
f_1(m_n)&f_2(m_n)&\ldots&f_n(m_n)
\end{array}
\right) \eea Now define
$$\phi:Hom_{M_n(A)}(Col_n(M),M_n(A))\rightarrow
Row_n(Hom_A(M,A)), \phi (f)=(f_1,\ldots, f_n)$$ It is
straightforward to check that $\phi$ is a morphism of right
$M_n(A)$-modules. Obviously the kernel of $\phi$ is zero, so
$\phi$ is injective. To see that $\phi$ is surjective, let
$u_1,\ldots,u_n\in Hom_A(M,A)$, and define $g:Col_n(M)\rightarrow
M_n(A)$ by $$g\left(\left(
\begin{array}{c}
m_1\\
m_2\\
\ldots\\
m_n
\end{array}
\right) \right)\left(
\begin{array}{cccc}
u_1(m_1)&u_2(m_1)&\ldots&u_n(m_1)\\
u_1(m_2)&u_2(m_2)&\ldots&u_n(m_2)\\
\ldots&\ldots&\ldots&\ldots\\
fu_1(m_n)&u_2(m_n)&\ldots&u_n(m_n)
\end{array}
\right)$$ Then it is easy to check that $g$ is a morphism of left
$M_n(A)$-modules and $\phi(g)=(u_1,\ldots,u_n)$, so $\phi$ is also
surjective.
\end{proof}

Now we can prove the following.

\begin{proposition} \label{matriceFrobenius}
Let $A$ be a finite dimensional algebra such that $M_n(A)$ is a
Frobenius algebra for some positive integer $n$. Then $A$ is a
Frobenius algebra.
\end{proposition}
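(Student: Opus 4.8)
The plan is to transport the Frobenius property of $M_n(A)$ down to $A$ along the standard Morita equivalence $Col_n(-):A-mod\rightarrow M_n(A)-mod$, finishing with a Krull-Schmidt cancellation argument. Throughout I use the classical characterization (cf. \cite[Theorem 3.15]{lam}): a finite dimensional algebra $B$ is Frobenius if and only if $B\simeq B^*$ as left $B$-modules, where $B^*=Hom_k(B,k)$ carries the left $B$-module structure dual to the right regular module. In particular, the hypothesis that $M_n(A)$ is Frobenius says that $M_n(A)\simeq Hom_k(M_n(A),k)$ as left $M_n(A)$-modules.

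The first step is to describe both of these as left $M_n(A)$-modules. As a left module over itself, $M_n(A)$ is the direct sum of its $n$ columns, each isomorphic to $Col_n(A)$, so $M_n(A)\simeq Col_n(A)^n$ in $M_n(A)-mod$. As a right module over itself, $M_n(A)$ is the direct sum of its $n$ rows, each isomorphic to $Row_n(A)$; since $Hom_k(-,k)$ sends right $M_n(A)$-modules to left ones and commutes with finite direct sums, $Hom_k(M_n(A),k)\simeq Hom_k(Row_n(A),k)^n$ in $M_n(A)-mod$. The only non-formal point is the identification $Hom_k(Row_n(A),k)\simeq Col_n(A^*)$ in $M_n(A)-mod$, where $A^*=Hom_k(A,k)$ carries its usual left $A$-module structure: one writes a $k$-linear functional on $Row_n(A)$ as a column $(f_1,\ldots,f_n)$ of functionals on $A$ and checks that the right action of a matrix $(x_{ij})$ on such a functional is transported to the left action $(x_{ij})\cdot(f_1,\ldots,f_n)=(\sum_j x_{1j}f_j,\ldots,\sum_j x_{nj}f_j)$ on the column. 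This is the same bookkeeping as in the proof of Proposition \ref{cardual}, with the target algebra $M_n(A)$ there replaced by the ground field $k$. Hence $Hom_k(M_n(A),k)\simeq Col_n(A^*)^n$ in $M_n(A)-mod$.

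Combining the two computations with the hypothesis yields $Col_n(A)^n\simeq Col_n(A^*)^n$ in $M_n(A)-mod$. Since $Col_n(-)$ is an equivalence of categories, it is additive and fully faithful; as $Col_n(A)^n\simeq Col_n(A^n)$ and $Col_n(A^*)^n\simeq Col_n((A^*)^n)$, it follows that $A^n\simeq (A^*)^n$ in $A-mod$. Finally, $A$ is finite dimensional, so $A-mod$ is a Krull-Schmidt category: the decomposition of $A^n$ into indecomposables is that of $A$ with all multiplicities multiplied by $n$, and likewise for $(A^*)^n$ and $A^*$. Equality of these two decompositions forces the decompositions of $A$ and of $A^*$ to coincide, i.e. $A\simeq A^*$ in $A-mod$, which means exactly that $A$ is Frobenius.

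The step where care is really needed is the middle identification $Hom_k(Row_n(A),k)\simeq Col_n(A^*)$ as left $M_n(A)$-modules: a priori these are only known to be vector spaces of the same dimension, and one has to verify that $k$-duality carries the right matrix action to the asserted left column action — this is precisely the computation underlying Proposition \ref{cardual}. The remaining ingredients, namely the row and column splittings of $M_n(A)$, the fact that $Col_n(-)$ is a Morita equivalence, and the cancellation $A^n\simeq(A^*)^n\Rightarrow A\simeq A^*$, are routine.
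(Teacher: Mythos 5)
Your proof is correct, but it follows a genuinely different route from the paper's. The paper first transfers quasi-Frobeniusness from $M_n(A)$ to $A$ by Morita invariance, and then verifies the dimension criterion of \cite[Theorems 16.33 and 16.34]{lam} ($\dim_k Hom_A(M,A)=\dim_k M$ for all finite dimensional $M$) via the isomorphism $Hom_{M_n(A)}(Col_n(M),M_n(A))\simeq Row_n(Hom_A(M,A))$ of Proposition \ref{cardual}; so its key inputs are that two-sided characterization of Frobenius algebras and the row/column Hom-computation. You instead work directly with the defining isomorphism $M_n(A)\simeq M_n(A)^*$: decomposing the left regular module into columns and the right regular module into rows, identifying $Hom_k(Row_n(A),k)\simeq Col_n(A^*)$ (the same bookkeeping as in Proposition \ref{cardual}, with target $k$), and then cancelling $Col_n(A)^n\simeq Col_n(A^*)^n$ down to $A\simeq A^*$ via the Morita equivalence and Krull--Schmidt. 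All the steps check out, including the computation that the dual of the right matrix action on a row is the left column action $\bigl(\sum_j x_{ij}f_j\bigr)_i$, and the Krull--Schmidt cancellation $A^n\simeq (A^*)^n\Rightarrow A\simeq A^*$, which is valid since $A$ is finite dimensional. What your argument buys is self-containedness: it needs only the definition $B\simeq B^*$, the standard Morita equivalence, and uniqueness of indecomposable decompositions, avoiding both the quasi-Frobenius Morita-invariance step and the citation of Lam's dimension-count characterization. What it costs is the reliance on Krull--Schmidt, whereas the paper's dimension count sidesteps any cancellation issue; the paper's route also reuses its Proposition \ref{cardual} verbatim, which is presumably why the authors chose it.
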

\begin{proof}
Since $M_n(A)$ is Frobenius, it is left self-injective, i.e. it is
quasi-Frobenius. This property is invariant under Morita
equivalence, so we have that $A$ is also quasi-Frobenius.

On the other hand, let $M$ be a finite dimensional left
$A$-module. Then $Col_n(M)$ is a finite dimensional left
$M_n(A)$-module. Since $M_n(A)$ is Frobenius, \cite[Theorem
16.34]{lam} shows that $$dim_k\; Hom_{M_n(A)}(Col_n(M),M_n(A))=
dim_k\; Col_n(M)$$ By Proposition \ref{cardual} we have that
$dim_k\; Hom_{M_n(A)}(Col_n(M),M_n(A))=dim_k\; Row_n(Hom_A(M,A))=
ndim_k Hom_A(M,A)$. Since $dim_k\; Col_n(M)=ndim_k\; M$, we obtain
that $dim_k Hom_A(M,A)=dim_k\; M$. Now $A$ is Frobenius by
\cite[Theorem 16.33]{lam}.
\end{proof}

We will present a graded version of the result characterizing the
Frobenius property by Frobenius functors. Let $A=\bigoplus
\limits_{g\in G}A_g$ be a $G$-graded finite dimensional algebra.
We can regard the basic field $k$ as a $G$-graded algebra with
trivial grading, i.e. the homogeneous component of degree $e$ is
the whole of $k$ and all the other homogeneous components are
zero. Let $k-gr$ be the associated category of graded modules,
which is in fact the category of $G$-graded vector spaces. A
$G$-graded vector space is just a vector space which is a direct
sum of subspaces indexed by $G$. Let $U:A-gr\rightarrow k-gr$ be
the forgetful functor, which forgets the $A$-action, but preserves
the $G$-grading.

We define the functor $F:k-gr\rightarrow A-gr$ by $F(V)=A\otimes
V$, with $A$-action on the first tensor position, and grading
given by $F(V)_g=\bigoplus \limits_{\sigma\tau=g}A_{\sigma}\otimes
V_{\tau}$ for any $g\in G$. On morphisms $F$ acts as the tensor
product with $Id_A$.

We also define the functor $T:k-gr\rightarrow A-gr$ by
$T(V)=Hom(A,V)$, with the $A$-action induced by the right
$A$-module structure of $A$, and the grading given by
$T(V)_\sigma=\{ f\in Hom(A,V)\; |\; f(A_g)\subseteq V_{g\sigma}
\mbox{ for any }g\in G\}$. On morphisms $T$ acts as $Hom(Id_A,-)$.
It is straightforward to check  that $F$ is a left adjoint of $U$,
and $T$ is a right adjoint of $U$. Now we have the following
characterization of the graded Frobenius property.

\begin{proposition}
$A$ is graded Frobenius if and only if the forgetful functor
$U:A-gr\rightarrow k-gr$ is a Frobenius functor.
\end{proposition}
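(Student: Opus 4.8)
The plan is to reduce the statement to the single assertion that the functors $F,T:k-gr\to A-gr$ are naturally isomorphic, and then to identify this with the graded Frobenius property. Recall that $F$ is a left adjoint of $U$ and $T$ is a right adjoint of $U$. Since a left adjoint (respectively a right adjoint) of a given functor is unique up to natural isomorphism, $U$ has a two-sided adjoint --- i.e. $U$ is a Frobenius functor --- if and only if $F\simeq T$. Indeed, if $G:k-gr\to A-gr$ is both a left and a right adjoint of $U$, then $G\simeq F$ and $G\simeq T$, whence $F\simeq T$; conversely, if $F\simeq T$, then $G=F$ is simultaneously a left and a right adjoint of $U$.

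Next I would record a natural isomorphism $T(V)\simeq A^*\otimes V$ in $A-gr$, natural in $V\in k-gr$, where $A^*\otimes V$ carries the left $A$-action on the first tensorand (dual to right multiplication on $A$) and the grading $(A^*\otimes V)_g=\bigoplus_{\sigma\tau=g}(A^*)_\sigma\otimes V_\tau$. Since $A$ is finite dimensional, the canonical map $A^*\otimes V\to Hom(A,V)$, $a^*\otimes v\mapsto(a\mapsto a^*(a)v)$, is bijective, and a direct check shows it is left $A$-linear and homogeneous for these structures. Under this identification $T$ becomes the functor $V\mapsto A^*\otimes V$, acting as $A^*\otimes(-)$ on morphisms, whereas $F$ is $V\mapsto A\otimes V$, acting as $A\otimes(-)$ on morphisms.

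The two directions then follow easily. If $U$ is a Frobenius functor, then $F\simeq T$; evaluating at $V=k$ (with trivial grading) gives $A\simeq F(k)\simeq T(k)\simeq A^*$ in $A-gr$ --- here one checks that $F(k)$ is $A$ and $T(k)$ is $A^*$ with their standard gradings --- so $A$ is graded Frobenius. Conversely, given an isomorphism $\phi:A\to A^*$ in $A-gr$, the maps $\phi\otimes Id_V:A\otimes V\to A^*\otimes V$ are isomorphisms of graded left $A$-modules and are clearly natural in $V$; composing them with the natural isomorphism $A^*\otimes V\simeq T(V)$ above yields a natural isomorphism $F\simeq T$, hence $U$ is a Frobenius functor.

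The only point requiring care is the verification that the canonical map $A^*\otimes V\to Hom(A,V)$ respects the gradings and the left $A$-module structures, together with the routine bookkeeping identifying the gradings of $F(k)$, $T(k)$ and $A^*$; this is also the place where finite dimensionality of $A$ is used, to ensure the canonical map is bijective. Everything else is formal uniqueness of adjoints, so I expect no genuine obstacle, only the degree-chasing in this identification.
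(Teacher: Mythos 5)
Your proposal is correct and follows essentially the same route as the paper: both directions hinge on the natural isomorphism $\gamma(V):A^*\otimes V\to Hom(A,V)$, with the forward implication obtained by evaluating at $V=k$ and the converse by tensoring a graded isomorphism $A\simeq A^*$ with $Id_V$ and composing with $\gamma(V)$. The only cosmetic difference is that you make the uniqueness-of-adjoints reduction ($U$ Frobenius iff $F\simeq T$) and the identification $T(V)\simeq A^*\otimes V$ explicit as preliminary steps, which the paper treats implicitly.
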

\begin{proof}
$U$ is a Frobenius functor if and only if its left and right
adjoints, $F$ and $T$, are naturally isomorphic. Assume that $U$
is Frobenius, thus there is an isomorphism $\eta(V):A\otimes
V\rightarrow Hom(A,V)$ of graded $A$-modules, natural in $V$, for
each graded vector space $V$. In particular, for $V=k$, with
grading concentrated in the degree $e$-component, we obtain an
isomorphism $A\otimes k\simeq Hom(A,k)$ of graded $A$-modules.
Since $A\otimes k\simeq A$, this yields an isomorphism of graded
left $A$-modules $A\simeq A^*$, i.e. $A$ is graded Frobenius.

Conversely, assume that $A$ is graded Frobenius, and let
$\theta:A\rightarrow A^*$ be an isomorphism in $A-gr$. Then for
any $V\in k-gr$ define $\eta(V)$ as the composition
$$\eta(V): A\otimes V\stackrel{\theta\otimes I}{\longrightarrow} A^*\otimes
V\stackrel{\gamma(V)}{\longrightarrow} Hom(A,V)$$ where
$\gamma(V)$ is the natural linear isomorphism defined by
$\gamma(V)(f\otimes v)(r)=f(r)v$ for any $f\in A^*$, $v\in V$ and
$r\in A$. It is straightforward to check that $\gamma(V)$ is in
fact an isomorphism of graded $A$-modules, and since $\theta
\otimes I$ is  an isomorphism in $A-gr$, we obtain that $\eta(V)$
is also an isomorphism in $A-gr$. Obviously $\eta(V)$ is natural
in $V$, and this shows that $\eta$ is a natural isomorphism
between $F$ and $T$.
\end{proof}

If $A$ is a graded algebra, another functor providing good
information about $A$ is  $(-)_e:A-gr\rightarrow A_e-mod$. For
instance, $A$ is strongly graded if and only if $(-)_e$ is an
equivalence of categories. We investigate whether $(-)_e$ provides
information about Frobenius properties on $A$. In this direction
we have the following.

\begin{theorem}
Let $A$ be a finite dimensional $G$-graded algebra. The following
assertions are true.\\
(1) If $A$ is graded Frobenius and $A$ is a projective left
$A_e$-module, then $(-)_e$ is a Frobenius functor.\\
(2) If $(-)_e$ is a Frobenius functor and $A_e$ is a Frobenius
algebra, then $A$ is graded Frobenius.
\end{theorem}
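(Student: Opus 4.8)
The plan is to reduce the Frobenius property of the functor $(-)_e:A-gr\to A_e-mod$ to a comparison of its two adjoints. It is standard that $(-)_e$ has as left adjoint the functor $Ind$ given by $Ind(N)=A\otimes_{A_e}N$ with grading $(A\otimes_{A_e}N)_g=A_g\otimes_{A_e}N$, and as right adjoint the functor $Coind=Hom_{A_e}({}_{A_e}A,-)$ recalled earlier in this section (with the $A$-module structure induced by right multiplication on $A$, and $Coind(N)_g$ the set of maps vanishing on $A_h$ for all $h\neq g^{-1}$). Since left and right adjoints are unique up to natural isomorphism, $(-)_e$ is a Frobenius functor if and only if $Ind\simeq Coind$ as functors $A_e-mod\to A-gr$; this is the statement I will prove in (1) and exploit in (2).

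For (1): by Remark \ref{remarcagrFrobenius}(2) the algebra $A_e$ is Frobenius, and by Theorem \ref{caractgrFrobenius} applied with $\sigma=e$ there is a non-degenerate associative bilinear form $B:A\times A\to k$ with $B(r_\tau,r_\mu)=0$ whenever $\tau\mu\neq e$. Let $\pi_e:A\to A_e$ be the projection onto the homogeneous component of degree $e$; it is an $A_e,A_e$-bimodule map with $\pi_e(bc)=b\,\pi_e(c)$ and $\pi_e(cb)=\pi_e(c)\,b$ for $b\in A_e$, $c\in A$. Expanding $B$ into homogeneous components and using associativity and the support condition, one gets $B(a,b)=\bar\lambda(\pi_e(ab))$ for all $a,b\in A$, where $\bar\lambda=B(1_A,-)|_{A_e}$. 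Define $\psi:A\to Coind(A_e)=Hom_{A_e}({}_{A_e}A,A_e)$ by $\psi(a)(x)=\pi_e(xa)$. The identities for $\pi_e$ make $\psi$ a morphism of graded $A,A_e$-bimodules, and $\psi$ is injective, since $\psi(a)=0$ forces $B(x,a)=\bar\lambda(\pi_e(xa))=0$ for all $x$, hence $a=0$. As $A_e$ is Frobenius, $Hom_{A_e}(-,A_e)$ is a duality on finite dimensional left $A_e$-modules, so $\dim_k Coind(A_e)=\dim_k A$ and $\psi$ is an isomorphism $A\simeq Coind(A_e)$ of graded $A,A_e$-bimodules. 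Finally, $A$ is projective, hence (being finite dimensional) finitely generated projective, as a left $A_e$-module, so the canonical map $Coind(A_e)\otimes_{A_e}N\to Coind(N)$, $\varphi\otimes n\mapsto(x\mapsto\varphi(x)n)$, is an isomorphism natural in $N$ and compatible with the left $A$-action and the grading. Composing $\psi\otimes\mathrm{id}_N$ with it gives a natural isomorphism $Ind(N)=A\otimes_{A_e}N\simeq Coind(N)$ in $A-gr$, so $Ind\simeq Coind$ and $(-)_e$ is a Frobenius functor.

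For (2): the isomorphism $Ind\simeq Coind$, evaluated at $N=A_e^*=Hom_k(A_e,k)$ (a left $A_e$-module via $(af)(x)=f(xa)$), gives $A\otimes_{A_e}A_e^*\simeq Hom_{A_e}({}_{A_e}A,A_e^*)$ in $A-gr$. The tensor--hom adjunction identifies the right-hand side with $Hom_k(A_e\otimes_{A_e}A,k)=A^*$ as graded left $A$-modules. On the other hand, since $A_e$ is a Frobenius algebra there is an isomorphism $\kappa:A_e\to A_e^*$ of left $A_e$-modules, which yields graded left $A$-module isomorphisms $A\simeq A\otimes_{A_e}A_e\simeq A\otimes_{A_e}A_e^*$ (the first via $a\otimes b\mapsto ab$, the second via $\mathrm{id}_A\otimes\kappa$). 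Chaining these, $A\simeq A^*$ in $A-gr$, i.e. $A$ is graded Frobenius.

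The crux, I expect, is the bimodule isomorphism $\psi:A\simeq Hom_{A_e}({}_{A_e}A,A_e)$ in (1): it says precisely that the degree-$e$ projection $\pi_e$ is a Frobenius homomorphism for the ring extension $A_e\subseteq A$, which is exactly the point where the hypothesis that $A$ is $e$-graded Frobenius (and not merely $\sigma$-graded Frobenius for some $\sigma\neq e$) is used. The projectivity hypothesis then does the expected work of upgrading this single bimodule isomorphism to an isomorphism of the functors $Ind$ and $Coind$. Part (2) is comparatively formal, its one substantive ingredient being that $A_e$ being Frobenius permits substituting $A_e^*$ for $A_e$, which turns $Coind(A_e^*)$ into the $k$-dual $A^*$. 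Throughout, the checks that the canonical maps respect module structures and gradings are routine and are omitted here.
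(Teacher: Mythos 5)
Your argument is correct, and its backbone coincides with the paper's: both proofs reduce the Frobenius property of $(-)_e$ to the existence of a natural isomorphism $Ind\simeq Coind$, and in part (1) both hinge on the graded $(A,A_e)$-bimodule isomorphism $A\simeq Hom_{A_e}(A,A_e)$ given by $a\mapsto \pi_e(-\,a)$ (the paper's map $\mu$, extracted from the proof of Theorem \ref{thcaractgrFrob}; your $\psi$ is the same map, with injectivity checked via the bilinear form of Theorem \ref{caractgrFrobenius} and surjectivity via the dimension count for the Frobenius algebra $A_e$). The difference is that the paper then invokes Menini--N\u{a}st\u{a}sescu \cite[Theorem 3.4]{mn} as a black box to pass from this single bimodule isomorphism to $Ind\simeq Coind$, whereas you prove that passage by hand through the canonical map $Hom_{A_e}(A,A_e)\otimes_{A_e}N\to Hom_{A_e}(A,N)$; this makes the role of the projectivity hypothesis explicit and keeps the proof self-contained. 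In part (2) your route genuinely diverges: the paper only extracts $e$-faithfulness from the injectivity of $\eta(A_e)$ and then falls back on Theorem \ref{thcaractgrFrob} (which is where the hypothesis that $A_e$ is Frobenius re-enters), while you evaluate the natural isomorphism at $N=A_e^*$ and chain $A\simeq A\otimes_{A_e}A_e^*\simeq Coind(A_e^*)\simeq A^*$ to land directly on the definition of graded Frobenius. Your version of (2) is slightly longer but avoids re-using the characterization theorem; the paper's is shorter at the cost of an extra appeal to \cite{mn} and to Theorem \ref{thcaractgrFrob}. All the structure and grading compatibilities you defer are indeed routine (they are the same verifications carried out for $\phi$ and $\overline{\gamma}$ in the proof of Theorem \ref{thcaractgrFrob}).
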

\begin{proof}
(1) Assume that $A$ is graded Frobenius. By the proof of Theorem
\ref{thcaractgrFrob} (for $\sigma =e$), we see that
$\mu:A\rightarrow Hom_{A_e}(A,A_e)$, $\mu
(r_{\sigma})(a)=a_{\sigma^{-1}}r_{\sigma}$ for any $\sigma \in G,
r_{\sigma}\in A_{\sigma}$ and $a\in A$, is an isomorphism in
$A-gr$. Moreover \bea \mu
(r_{\sigma}b_e)(a)&=&a_{\sigma^{-1}}r_{\sigma}b_e\\
&=&\mu (r_{\sigma})(a)b_e\\
&=&(\mu (r_{\sigma}b_e)(a)\eea for any $b_e\in A_e$, showing that
$\mu$ is also a morphism of right $A_e$-modules. Now we use
\cite[Theorem 3.4]{mn} and find that $(-)_e$ is a Frobenius
functor.

(2) We have seen that a right adjoint of $(-)_e$ is $Coind$. A
left adjoint of $(-)_e$ is the induced functor
$Ind:A_e-mod\rightarrow A-gr$, $Ind(N)=A\otimes _{R_e}N$, with
grading $Ind(N)_g=A_g\otimes _{A_e}N$, and acting on morphisms as
$Id_A\otimes_{A_e}-$. There is a natural transformation
$\eta:Ind\rightarrow Coind$ defined by $$\eta(N):Ind(N)\rightarrow
Coind(N), \eta (N)(r\otimes x)(b)=\sum_g (b_{g^{-1}}r_g)x \mbox{
for }r,b\in A,x\in N$$ By \cite[Theorem 3.4]{mn} the functor
$(-)_e$ is Frobenius if and only if $\eta(A_e)$ is an isomorphism.
Since $Ker(\eta(A_e))=t_e(A)$, we see that $A$ is $e$-faithful.
Now $A$ is graded Frobenius by Theorem \ref{thcaractgrFrob}.

\end{proof}

For many ring properties "P" it is true that for an algebra $A$
graded by a finite group $G$ we have that $A$ satisfies the graded
version of the property "P" if and only if the smash product $A\#
(kG)^*$ satisfies "P". If "P" is the property of being Frobenius,
then this connection does not hold. The reason is that the
Frobenius property is not a categorial one, more precisely it is
not invariant under Morita equivalence, despite that it is
preserved between $A$ and $M_n(A)$. In fact it was shown in
\cite{bergen} that for a finite dimensional Hopf algebra $H$
acting on a finite dimensional algebra $A$, we have that the smash
product $A\# H$ is Frobenius if and only if $A$ is Frobenius. Now
we give a new short proof of this result, by using Frobenius
functors.

\begin{theorem}
Let $H$ be a finite dimensional Hopf algebra and let $A$ be a
finite dimensional left $H$-module algebra. Then $A\# H$ is
Frobenius if and only if $A$ is Frobenius.
\end{theorem}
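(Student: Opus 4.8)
The plan is to prove the two implications separately, reducing each to a statement about Frobenius functors, and to obtain the harder implication from the easier one by using the duality theorem $(A\#H)\#H^{*}\cong M_{n}(A)$ of Blattner--Montgomery (see \cite{mo}), where $n=\dim_{k}H$, together with Proposition \ref{matriceFrobenius}.

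\textbf{First implication ($A$ Frobenius $\Rightarrow A\#H$ Frobenius).} Consider the restriction-of-scalars functors $U:(A\#H)-mod\to A-mod$ (along $a\mapsto a\#1$) and $U':A-mod\to k-mod$. Since $A$ is Frobenius, $U'$ is a Frobenius functor by \cite[Theorem 28]{cmz}, so its left adjoint $A\otimes_{k}-$ and its right adjoint $Hom_{k}(A,-)$ are naturally isomorphic. For $U$ the situation is slightly weaker: $A\#H$ is finitely generated free as a left $A$-module, and, $H$ being a finite dimensional Hopf algebra (hence a Frobenius algebra), it is known that $A\#H$ is a Frobenius extension of $A$, in general of the second kind; that is, the left adjoint $(A\#H)\otimes_{A}-$ and the right adjoint $Hom_{A}({}_{A}(A\#H),-)$ of $U$ differ only by composition with the autoequivalence $(-)^{\beta}$ of $(A\#H)-mod$ associated to an algebra automorphism $\beta$ of $A\#H$ (governed by the modular elements of $H$ and of $H^{*}$), where $\beta$ fixes the subalgebra $A\#1$ pointwise. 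Since $U'$ contributes no twist, the left and right adjoints of the composite $U'\circ U:(A\#H)-mod\to k-mod$ still differ only by $(-)^{\beta}$. Evaluating these two adjoints at $k$ produces the left $A\#H$-modules $(A\#H)\otimes_{A}(A\otimes_{k}k)\cong A\#H$ and $Hom_{A}\big({}_{A}(A\#H),Hom_{k}(A,k)\big)\cong(A\#H)^{*}$, so $(A\#H)^{*}\cong(-)^{\beta}(A\#H)$ or $(-)^{\beta^{-1}}(A\#H)$. Because twisting the regular left module by an algebra automorphism yields an isomorphic module (via the automorphism itself), this gives $(A\#H)^{*}\cong A\#H$ as left $A\#H$-modules, i.e. $A\#H$ is Frobenius.

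\textbf{Second implication ($A\#H$ Frobenius $\Rightarrow A$ Frobenius).} The argument above applies to any finite dimensional Hopf algebra acting on any finite dimensional algebra, and $A\#H$ is itself a finite dimensional left $H^{*}$-module algebra (with $H^{*}$ acting through its natural action on the tensorand $H$ and trivially on $A$). Applying the first implication to $H^{*}$ acting on $A\#H$, the hypothesis yields that $(A\#H)\#H^{*}$ is Frobenius; but $(A\#H)\#H^{*}\cong A\otimes End_{k}(H)\cong M_{n}(A)$ as algebras, so $M_{n}(A)$ is Frobenius, and by Proposition \ref{matriceFrobenius} so is $A$. No circularity occurs, since this implication uses only the first implication (for the pair $(A\#H,H^{*})$), the duality theorem, and Proposition \ref{matriceFrobenius}.

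\textbf{Main obstacle.} The delicate point is the first implication, and precisely that $U:(A\#H)-mod\to A-mod$ is usually not a genuine Frobenius functor: when $H$ is not unimodular (or $S^{2}$ is not the identity) its two adjoints really do differ by the nontrivial $(-)^{\beta}$. What rescues the argument is that, after composition with the honest Frobenius functor $U'$, the comparison only has to be made on the free module $A\#H$, on which $(-)^{\beta}$ is isomorphic to the identity, so the twist is harmless exactly where it is evaluated. The one ingredient needing genuine Hopf-algebraic work is therefore the statement that $A\#H$ is a second-kind Frobenius extension of $A$ with $\beta$ fixing $A$ — this is where the Frobenius property of $H$, the antipode and the modular function enter — together with the duality $(A\#H)\#H^{*}\cong M_{n}(A)$ used in the second implication.
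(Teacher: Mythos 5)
Your proof is correct and, at the level of overall architecture, is the same as the paper's: the hard direction is reduced to an adjoint-functor statement about the inclusion $A\subseteq A\# H$, and the converse is obtained, exactly as in the paper, by applying the forward direction to the dual $H^*$ acting on $A\# H$, invoking the duality $(A\# H)\# H^*\simeq M_n(A)$, and finishing with Proposition \ref{matriceFrobenius}. The one genuine difference is the key input for the forward direction. The paper uses the fact that the forgetful functor $_A{\cal M}^{H^*}\rightarrow A-mod$ --- which, under the isomorphism of categories $_A{\cal M}^{H^*}\simeq A\# H-mod$, is precisely restriction of scalars along $A\subseteq A\# H$ --- is an honest Frobenius functor; equivalently, $A\subseteq A\# H$ is a Frobenius extension of the \emph{first} kind (a Frobenius system comes from an integral $\lambda\in H^*$ and an element of the form $\sum (1\# t_1)\otimes_A (1\# S(t_2))$ for a suitable integral $t\in H$; no unimodularity is required here, in contrast with the case of Hopf subalgebras of $H$, which is presumably the source of your caution). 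With that input the forward direction is literally ``a composition of Frobenius functors is Frobenius.'' You instead assume only the weaker, twisted (second-kind) version of this extension property and then verify that the twist disappears when the two adjoints are evaluated on the regular module; that bookkeeping is logically sound, and your verification that $(A\# H)^{*}\simeq (A\# H)^{\beta}\simeq A\# H$ is correct, but the detour is unnecessary: the obstruction you anticipate from the modular elements does not actually arise for the smash product extension. In short, both proofs purchase the theorem from the same external ingredient (the Frobenius-extension property of $A\subseteq A\# H$ plus the duality theorem); yours cites it in a weaker form and compensates with an extra, correct, step, which makes the argument slightly longer but also slightly more robust.
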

\begin{proof}
 Since $H$ is finite dimensional, $A$ is a right $H^*$-comodule algebra, and we can consider
 the category of generalized Hopf modules $ _A{\cal M}^{H^*}$. There is an isomorphism of
 categories $P:A\# H-mod\rightarrow \, _A{\cal M}^{H^*}$, which
 associates to a left $A\# H$-module $M$ the space $M$ with the
 left $A$-module structure obtained by restriction of scalars via
 $A\subseteq A\# H$, and the right $H^*$-comodule structure coming
 from the left $H$-module structure. If $U:\,  _A{\cal
 M}^{H^*}\rightarrow A-mod$ is the functor forgetting the
 $H^*$-comodule structure, then $UP$ is just the restriction of
 scalars, and it is known that $U$ is a Frobenius functor.

 If $A$ is Frobenius, then the restriction of scalars
 $U_1:A-mod\rightarrow k-mod$ is a Frobenius functor, so then
 $U_1UP$ is a Frobenius functor, as a composition of Frobenius
 functors. But $U_1UP:A\# H-mod\rightarrow k-mod$ is just the restriction of
 scalars, and we obtain that $A\# H$ is Frobenius.

 Conversely, assume that $A\# H$ is Frobenius. Then $A\# H$ is a
 left $H^*$-module algebra, and by the above considerations $(A\#
 H)\# H^*$ is Frobenius. Now $(A\#
 H)\# H^*\simeq M_n(R)$, where $n=dim(H)$, by the duality theorem
 for finite dimensional Hopf algebras (see \cite[Corollary 9.4.17]{mo}), and we
 obtain that $A$ is Frobenius by Proposition
 \ref{matriceFrobenius}.

\end{proof}

We note that if $A$ is a $G$-graded algebra and $X$ is a finite
$G$-set, then there is an associated smash product $A\# X$, see
\cite[page 216]{nvo}. We have that if $A$ is Frobenius, then so is
$A\# X$. Indeed, the category of left modules over $A\# X$ is
isomorphic to the category of generalized Doi-Hopf modules
$_A{\cal M}^{kX}$, which is nothing but the category of
$A$-modules graded by the $G$-set $X$. The forgetful functor
$_A{\cal M}^{kX}\rightarrow _A{\cal M}$ is Frobenius (see for
instance \cite[Theorem 54]{cmz}, and arguments similar to the ones
above can be now applied.\\

{\bf Acknowledgment} We thank Daniel Bulacu for useful discussions
about Frobenius algebras in monoidal categories. The research was
supported by the UEFISCDI Grant PN-II-ID-PCE-2011-3-0635, contract
no. 253/5.10.2011 of CNCSIS.

\end{document}